\theoremstyle{theorem}
\newtheorem{theorem}{\sc Theorem}[section]
\newtheorem{lemma}[theorem]{\sc Lemma}
\newtheorem{proposition}[theorem]{\sc Proposition}
\newtheorem{corollary}[theorem]{\sc Corollary}
\theoremstyle{definition}
\newtheorem{definition}[theorem]{\sc Definition}
\theoremstyle{remark}
\newtheorem{remark}[theorem]{\sc Remark}
\renewcommand{\d}{\text{\rm d}}
\newcommand{\vep}{\varepsilon}
\newcommand{\e}{{\rm e}}
\newcommand{\R}{\mathbb{R}}
\newcommand{\N}{\mathbb{N}}
\newcommand{\Linv}{(-\Delta)^{-1}}
\newcommand{\Lphi}{\mathcal L_\phi}
\newcommand{\Ls}{\mathcal L_s}
\renewcommand{\H}{\mathcal{H}}
\newcommand{\prf}[1]{}  
\begin{document}
\title[Optimal rate of convergence for fast diffusion]{Optimal rate of convergence to nondegenerate asymptotic profiles for fast diffusion in domains\prf{\\{\tt [Extended version]}}}
%
%
\author{Goro Akagi}
\address[Goro Akagi]{Mathematical Institute and Graduate School of Science, Tohoku University, Aoba, Sendai 980-8578, Japan}
\email{goro.akagi@tohoku.ac.jp}
\author{Yasunori Maekawa}
\address[Yasunori Maekawa]{Department of Mathematics, Kyoto University, Kitashirakawa Oiwakecho, Sakyo-ku, Kyoto 606-8502, Japan}
\email{maekawa.yasunori.3n@kyoto-u.ac.jp}
\thanks{G.A.~is supported by JSPS KAKENHI Grant Numbers JP21KK0044, JP21K18581, JP20H01812 and JP20H00117. Y.M.~is supported by JSPS KAKENHI Grant Numbers 20H00118, 19H05597, 21H04433, 21H00991, 20K03698, and 23H01082. This work was also supported by the Research Institute for Mathematical Sciences, an International Joint Usage/Research Center located in Kyoto University.}
\date{\today}
\maketitle
\begin{abstract}
This paper is concerned with the Cauchy-Dirichlet problem for fast diffusion equations posed in bounded domains, where every energy solution vanishes in finite time and a suitably rescaled solution converges to an asymptotic profile. Bonforte and Figalli (CPAM, 2021) first proved an exponential convergence to nondegenerate \emph{positive} asymptotic profiles for nonnegative rescaled solutions in a weighted $L^2$ norm for smooth bounded domains by developing a \emph{nonlinear entropy method}. However, the optimality of the rate remains open to question. In the present paper, their result is fully extended to possibly \emph{sign-changing} asymptotic profiles as well as \emph{general} bounded domains by improving an \emph{energy method} along with a \emph{quantitative gradient inequality} developed by the first author (ARMA, 2023). Moreover, a (quantitative) exponential stability result for \emph{least-energy} asymptotic profiles follows as a corollary, and it is further employed to prove the optimality of the exponential rate.
\end{abstract}

\section{Introduction}\label{S:Intro}

Let $\Omega$ be any bounded domain of $\mathbb R^N$ with boundary $\partial \Omega$. There are a great number of contributions to the study of \emph{nonlinear diffusion equations} posed on bounded domains, that is,
\begin{equation}\label{NDE}
\partial_t \rho = \Delta \rho^m \ \mbox{ in } \Omega \times (0,\infty),
\end{equation}
where $\partial_t = \partial/\partial t$, $\rho = \rho(x,t)$ denotes the density of a diffusing substance and the diffusion coefficient $D$ scales with $\rho^{m-1}$ for an exponent $0 < m < \infty$. In particular, the case $0 < m < 1$ (respectively, $m > 1$) is called a \emph{fast diffusion equation} (respectively, \emph{porous medium equation}) and classified as a \emph{singular diffusion} (respectively, \emph{degenerate diffusion}). 

In the present paper, we deal with (possibly sign-changing) solutions to the Cauchy-Dirichlet problem for the fast diffusion equation,
\begin{alignat}{4}
\partial_t \left( |u|^{q-2}u \right) &= \Delta u
 \quad && \mbox{ in } \Omega \times (0, \infty),\label{eq:1.1}\\
 u &= 0 && \mbox{ on } \partial \Omega \times (0, \infty), \label{eq:1.2}\\
 u &= u_0 && \mbox{ on } \Omega \times \{0\}.\label{eq:1.3}
\end{alignat}
Of course, \eqref{eq:1.1} is transformed from \eqref{NDE} by setting $u = \rho^m$ and $q-1 = 1/m$, and vice versa. Throughout this paper, we assume that
\begin{equation}\label{hypo}
u_0 \in H^1_0(\Omega) \setminus \{0\}, \quad 2 < q < 2^* := \dfrac{2N}{(N-2)_+}.
\end{equation}
This problem was studied by Berryman and Holland in~\cite{BH78,BH80}, which were motivated in order to give a theoretical interpretation to the experimental observation of anomalous diffusion of hydrogen plasma across a purely poloidal octupole magnetic field that \emph{after a few milliseconds the density profile always evolves into a fixed shape {\rm (}the ``normal mode''{\rm )} which then decays in time} based on the Okuda-Dawson model $D \sim \rho^{-1/2}$ (i.e., the case $q = 3$) proposed in~\cite{Okuda-Dawson}.

Let us recall \emph{qualitative} results on asymptotic behavior of (weak) solutions to \eqref{eq:1.1}--\eqref{eq:1.3}. Due to the homogeneous Dirichlet boundary condition, the diffusion coefficient $D$ diverges on the boundary (see \eqref{NDE}). As a result, every weak solution $u = u(x,t)$ of \eqref{eq:1.1}--\eqref{eq:1.3} vanishes at a finite time $t_*$, which is uniquely determined by the initial datum $u_0$ (see~\cite{Sabinina62,BC,Diaz88,HerreroVazquez88}); hence, we denote $t_* = t_*(u_0)$. Moreover, Berryman and Holland~\cite{BH80} proved that the extinction rate of the \emph{positive classical} solution $u(\cdot,t)$ is just $(t_*-t)_+^{1/(q-2)}$ as $t \nearrow t_*$, that is,
\begin{equation}\label{ext_time}
c_1 (t_* - t)_+^{1/(q-2)} \leq \|u(\cdot,t)\|_{H^1_0(\Omega)} 
\leq c_2 (t_* - t)_+^{1/(q-2)}
\end{equation}
with $c_1, c_2 > 0$ for all $t \geq 0$, provided that $u_0 \not\equiv 0$; furthermore, this fact is extended to (possibly) \emph{sign-changing weak solutions} by~\cite{Kwong88-3,DKV91,SavareVespri,AK13} (see also~\cite{BGV08,BV10,JiXi23,JiXi22+}). Therefore the \emph{asymptotic profile} $\phi(x)$ of $u(x,t)$ is defined by
\begin{equation}\label{ap}
\phi(x) = \lim_{t\nearrow t_*} (t_*-t)^{-1/(q-2)} u(x,t) \not\equiv 0 \ \mbox{ in } H^1_0(\Omega),
\end{equation}
which corresponds to the \emph{fixed shape of the density profile} concerned in~\cite{BH78,BH80}.
Apply the change of variables, 
\begin{equation}\label{cv}
v(x,s) = (t_* - t)^{-1/(q-2)} u(x,t)
\ \mbox{ and } \  s = \log (t_*/(t_* - t))
\end{equation}
for $t \in [0,t_*)$. Then the asymptotic profile $\phi(x)$ is reformulated as the limit of $v(x,s)$ as $s \to \infty$. Moreover, $v=v(x,s)$ turns out to be an energy solution of the following Cauchy-Dirichlet problem:
\begin{alignat}{4}
 \partial_s \left( |v|^{q-2}v \right) &= \Delta v + \lambda_q |v|^{q-2}v
\quad && \mbox{ in } \Omega \times (0, \infty),\label{eq:1.6}\\
 v &= 0 && \mbox{ on } \partial \Omega \times (0, \infty), \label{eq:1.7}\\
 v &= v_0 && \mbox{ on } \Omega \times \{0\}\label{eq:1.8}
\end{alignat}
with $\lambda_q := (q-1)/(q-2) > 0$ and $v_0 := t_*(u_0)^{-1/(q-2)}u_0$. Here we note that \eqref{eq:1.6} along with \eqref{eq:1.7} can also be formulated as a (generalized) gradient flow of the form,
$$
\partial_s \left( |v|^{q-2}v \right)(s) = - J'(v(s)) \ \mbox{ in } H^{-1}(\Omega), \quad s > 0,
$$
where $J' : H^1_0(\Omega) \to H^{-1}(\Omega)$ denotes the Fr\'echet derivative of the energy functional,
$$
J(w) := \frac 12 \int_\Omega |\nabla w(x)|^2 \, \d x - \frac{\lambda_q}q \int_\Omega |w(x)|^q \, \d x \quad \mbox{ for } \ w \in H^1_0(\Omega).
$$
Here and henceforth, we may denote $v(s) = v(\cdot,s)$ for $s \geq 0$. Moreover, it is also noteworthy that $v_0$ lies on the set,
\begin{align}
\mathcal{X} :=& \ \{ t_*(u_0)^{-1/(q-2)}u_0 \colon u_0 \in H^1_0(\Omega) \setminus \{0\}\} \label{phase_set}\\
=& \ \{w \in H^1_0(\Omega) \colon t_*(w) = 1\},\nonumber
\end{align}
which is an invariant set of the dynamical system generated by \eqref{eq:1.6}--\eqref{eq:1.8} and plays a role of the phase set in stability analysis of asymptotic profiles (see Definition \ref{D:stbl} below and~\cite{AK13} for more details). Moreover, by virtue of \eqref{ext_time}, we see that
\begin{equation}\label{v-bdd}
0 < c_1 \leq \|v(s)\|_{H^1_0(\Omega)} \leq c_2 < +\infty \quad \mbox{ for } \ s \geq 0.
\end{equation}
Hence the norm $\|v(\cdot,s)\|_{H^1_0(\Omega)}$ can neither vanish nor grow up to infinity (cf.~see~\cite[Proposition 10]{AK13}).

Berryman and Holland~\cite{BH80} proved that any \emph{positive classical} solution $v(\cdot,s_n)$ of \eqref{eq:1.6}--\eqref{eq:1.8} converges strongly in $H^1_0(\Omega)$ to a nontrivial solution $\phi = \phi(x)$ to the Dirichlet problem,
\begin{alignat}{4}
 - \Delta \phi &= \lambda_q |\phi|^{q-2}\phi \quad && \mbox{ in } \Omega,
\label{eq:1.10}\\
\phi &= 0 && \mbox{ on } \partial \Omega,
\label{eq:1.11}
\end{alignat}
for some sequence $s_n \to +\infty$ and, in particular, if $N = 1$, then $v(s) \to \phi$ as $s \to +\infty$. Such a \emph{quasi-convergence} result was extended to (possibly) \emph{sign-changing weak solutions} in~\cite{Kwong88-3,DKV91,SavareVespri,BGV08,BV10,AK13}. More precisely, the following theorem holds true:
\begin{theorem}[\cite{BH80,Kwong88-3,DKV91,SavareVespri,AK13}]
Under the assumption \eqref{hypo}, let $u$ be a {\rm (}possibly sign-changing{\rm )} energy solution of {\rm \eqref{eq:1.1}--\eqref{eq:1.3}} and let $t_* \in (0,\infty)$ be the extinction time of $u$. Then for any increasing sequence $t_n \to t_*$, there exist a subsequence $(n')$ of $(n)$ and a nontrivial solution $\phi \in H^1_0(\Omega)\setminus \{0\}$ of {\rm \eqref{eq:1.10}, \eqref{eq:1.11}} such that
 \begin{equation}\label{qconv}
  \lim_{t_{n'} \to t_*}
   \|(t_* - t_{n'})^{-1/(q-2)} u(t_{n'}) - \phi \|_{H^1_0(\Omega)} = 0,
 \end{equation}
equivalently,
 \begin{equation*}
  \lim_{s_{n'} \to \infty}
   \|v(s_{n'}) - \phi \|_{H^1_0(\Omega)} = 0,
 \end{equation*}
where $v$ and $s_n$ are defined as in \eqref{cv} for $u$ and $t_n$, respectively.
\end{theorem}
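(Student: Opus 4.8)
The plan is to carry out the argument on the rescaled problem \eqref{eq:1.6}--\eqref{eq:1.8}, exploiting its gradient-flow structure. First I would record the two energy identities produced by testing \eqref{eq:1.6} against $\partial_s v$ and against $v$. Using $\partial_s(|v|^{q-2}v)=(q-1)|v|^{q-2}\partial_s v$ (licit since $q>2$) and integration by parts,
\begin{equation*}
\frac{\d}{\d s}J(v(s)) = -(q-1)\int_\Omega |v(x,s)|^{q-2}|\partial_s v(x,s)|^2\,\d x \le 0 ,
\end{equation*}
so $J$ is a strict Lyapunov functional, while testing against $v$ gives $\tfrac{q-1}{q}\tfrac{\d}{\d s}\|v(s)\|_{L^q(\Omega)}^q=-\|\nabla v(s)\|_{L^2(\Omega)}^2+\lambda_q\|v(s)\|_{L^q(\Omega)}^q$. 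Since $J(v(s))$ stays bounded by \eqref{v-bdd}, it decreases to a finite limit $J_\infty$ and, integrating the first identity,
\begin{equation*}
(q-1)\int_0^\infty\!\!\int_\Omega |v|^{q-2}|\partial_s v|^2\,\d x\,\d s = J(v_0)-J_\infty <\infty .
\end{equation*}

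Next I would extract the limiting values and the nontriviality of the limit. Combining the two identities and using $\lambda_q=(q-1)/(q-2)$, the scalar function $y(s):=\|v(s)\|_{L^q(\Omega)}^q$ satisfies $y(s)-y'(s)=\tfrac{2q}{q-1}J(v(s))\to\tfrac{2q}{q-1}J_\infty$; as $y$ is bounded (by \eqref{v-bdd} and the Sobolev embedding), solving this linear ODE forces $y(s)\to\tfrac{2q}{q-1}J_\infty$, and then $2J(v(s))=\|\nabla v(s)\|_{L^2(\Omega)}^2-\tfrac{2\lambda_q}{q}y(s)$ yields $\|\nabla v(s)\|_{L^2(\Omega)}^2\to\tfrac{2q}{q-2}J_\infty$. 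The lower bound $\|v(s)\|_{H^1_0(\Omega)}\ge c_1>0$ from \eqref{v-bdd} --- equivalently the sharp extinction rate \eqref{ext_time} --- now gives $\tfrac{2q}{q-2}J_\infty\ge c_1^2>0$, hence $J_\infty>0$ and both limiting constants are strictly positive. This is the step where the quantitative estimate \eqref{ext_time} is indispensable.

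Finally I would pass to the limit. Given increasing $t_n\to t_*$, set $s_n=\log(t_*/(t_*-t_n))\to\infty$; by \eqref{v-bdd} and Rellich--Kondrachov ($q<2^*$) we may pass to a subsequence (not relabeled) along which $v(s_n)\rightharpoonup\phi$ weakly in $H^1_0(\Omega)$ and strongly in $L^q(\Omega)$. To identify $\phi$, consider the shifted trajectories $v_n(s):=v(s_n+s)$ on $[0,1]$: they solve \eqref{eq:1.6}, are bounded in $L^\infty(0,1;H^1_0(\Omega))$, and $\int_0^1\!\int_\Omega|v_n|^{q-2}|\partial_s v_n|^2\,\d x\,\d s\to0$, being the tail of the convergent integral above; by H\"older this forces $\partial_s(|v_n|^{q-2}v_n)=(q-1)|v_n|^{q-2}\partial_s v_n\to0$ in $L^2(0,1;L^{q'}(\Omega))$. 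Since $|v_n|^{q-2}v_n$ is bounded in $L^\infty(0,1;L^{q'}(\Omega))$ and $L^{q'}(\Omega)\hookrightarrow\hookrightarrow H^{-1}(\Omega)$, an Aubin--Lions argument yields $|v_n|^{q-2}v_n\to w$ in $C([0,1];H^{-1}(\Omega))$ with $w$ independent of $s$; evaluating at $s=0$ and using $v(s_n)\to\phi$ in $L^q(\Omega)$ forces $w\equiv|\phi|^{q-2}\phi$, so passing to the limit in \eqref{eq:1.6} (a standard monotonicity argument handling the nonlinearity) gives $-\Delta\phi=\lambda_q|\phi|^{q-2}\phi$ with $\phi\in H^1_0(\Omega)$, i.e.\ \eqref{eq:1.10}--\eqref{eq:1.11}. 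Then $\|\nabla\phi\|_{L^2(\Omega)}^2=\lambda_q\|\phi\|_{L^q(\Omega)}^q=\lambda_q\lim\|v(s_n)\|_{L^q(\Omega)}^q=\tfrac{2q}{q-2}J_\infty=\lim\|\nabla v(s_n)\|_{L^2(\Omega)}^2$, so the weak $H^1_0$-convergence improves to strong convergence of $v(s_n)$ to $\phi$ --- which is \eqref{qconv} --- while $\|\phi\|_{L^q(\Omega)}^q=\tfrac{2q}{q-1}J_\infty>0$ shows $\phi\not\equiv0$.

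I expect the main difficulty to lie in this last step: the dissipation estimate only controls $\int_\Omega|v|^{q-2}|\partial_s v|^2\,\d x$ in an $s$-averaged sense, so one cannot simply evaluate the PDE along the prescribed times $s_n$, and the weight $|v|^{q-2}$ is degenerate/singular; the time-shift combined with the Aubin--Lions/monotonicity compactness resolves both issues at once. (Convergence of the entire trajectory $v(s)$ as $s\to\infty$, for instance when $N=1$, requires additional structural information on the set of profiles and is not claimed here.)
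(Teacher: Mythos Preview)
The paper does not prove this theorem; it is stated as a known result from the cited references \cite{BH80,Kwong88-3,DKV91,SavareVespri,AK13}, so there is no ``paper's own proof'' to compare against. Your argument is essentially the one found in \cite{AK13} (and the earlier references): Lyapunov decay of $J$, integrability of the dissipation, compactness on time-shifted trajectories, and upgrade to strong $H^1_0$-convergence via convergence of norms. It is correct in outline and the ODE argument showing $\|v(s)\|_{L^q(\Omega)}^q$ and $\|\nabla v(s)\|_{L^2(\Omega)}^2$ actually converge (not merely along subsequences) is a clean way to obtain the norm limit needed for the final step.

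Two technical points deserve care. First, for energy solutions the quantity $\partial_s v$ is not known to lie in any useful space; the paper (and the standard literature) works instead with $\partial_s(|v|^{(q-2)/2}v)\in L^2(0,\infty;L^2(\Omega))$ and $\partial_s(|v|^{q-2}v)\in L^2(0,\infty;L^{q'}(\Omega))$, and the energy relation \eqref{eneq} is an \emph{inequality}, not an identity. This does not affect your argument --- monotonicity of $J$ and finiteness of the integrated dissipation are all you use --- but the presentation should be phrased in those terms. The identity obtained by testing against $v$ is genuinely an equality (it follows from $|v|^{(q-2)/2}v\in W^{1,2}_{\mathrm{loc}}(0,\infty;L^2(\Omega))$, which makes $s\mapsto\|v(s)\|_{L^q(\Omega)}^q$ absolutely continuous), so your ODE step is rigorous. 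Second, the phrase ``a standard monotonicity argument'' is a bit of a black box here; in fact no Minty trick is needed: once $|v_n|^{q-2}v_n\to|\phi|^{q-2}\phi$ in $C([0,1];L^{q'}(\Omega))$ and $v_n(s)$ is bounded in $H^1_0(\Omega)$, the invertibility of $r\mapsto|r|^{q-2}r$ plus Rellich compactness forces $v_n(s)\to\phi$ in $L^q(\Omega)$ for every $s$, and then the weak $H^1_0$-limit in the time-averaged equation is identified directly as $\phi$.
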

Moreover, Feireisl and Simondon~\cite{FeiSim00} proved convergence of any \emph{nonnegative} weak solution $v = v(x,s) \geq 0$ for \eqref{eq:1.6}--\eqref{eq:1.8} to a positive solution $\phi$ for \eqref{eq:1.10}, \eqref{eq:1.11} in $C(\overline{\Omega})$ as $s \to +\infty$ by developing a \L ojasiewicz-Simon gradient inequality. Furthermore, based on this along with the so-called \emph{Global Harnack Principle} (GHP for short), which is valid for bounded $C^2$ domains and developed in~\cite{BGV08}, that is, for any $\delta > 0$, there exist constants $c_3,c_4 > 0$ such that
\begin{equation}\label{GHP}
c_3 \leq \frac{v(\cdot,s)}{\mathrm{dist}(\cdot,\Omega)} \leq c_4 \ \mbox{ on } \overline\Omega \ \mbox{ for } \ s > \delta,
\end{equation}
where $\mathrm{dist}(x,\partial \Omega) := \inf_{y \in \partial \Omega} |x-y| \asymp \phi(x) > 0$, Bonforte, Grillo and Vazquez~\cite{BGV12} proved convergence of the \emph{relative error},
\begin{equation}\label{rel-err-conv}
h(s) := (v(s)-\phi)/\phi \ \mbox{ in } C(\overline{\Omega}) \ \mbox{ as } \ s \to +\infty
\end{equation}
for positive solutions (see also~\cite[Theorem 4.1]{BF21} for a quantitative result, which also gives an alternative proof to the above).

As for \emph{quantitative} results, developing a \emph{nonlinear entropy method}, Bonforte and Figalli~\cite{BF21} proved a sharp rate of convergence for nonnegative $v = v(x,s)$ in the \emph{relative entropy}, 
\begin{equation}\label{ent-conv}
\mathsf{E}(s) := \int_\Omega |v(x,s)-\phi(x)|^2 \phi(x)^{q-2} \, \d x \leq C \e^{-\lambda_0 s} \ \mbox{ for } \ s > 0,
\end{equation}
where $\lambda_0 := 2\nu_k/(q-1)$ and $\nu_k$ is the least \emph{positive} eigenvalue of the weighted eigenvalue problem
\begin{equation}\label{Lphi-ep}
\Lphi e = \nu |\phi|^{q-2} e \ \mbox{ in } \Omega, \quad e = 0 \ \mbox{ on } \partial \Omega
\end{equation}
for the linearized operator $\Lphi := - \Delta - \lambda_q (q-1) |\phi|^{q-2}$, provided that $\phi$ is \emph{positive} and \emph{nondegenerate} (i.e., $\Lphi$ has no zero eigenvalue) and $\partial\Omega$ is smooth (at least of class $C^2$). The above rate of convergence seems \emph{sharp} in view of a formal linearization (see~\cite[\S 2]{BF21}). 

Furthermore, an alternative approach based on an energy method along with a quantitative gradient inequality is developed in~\cite{A21} to (directly) prove that
\begin{equation}\label{h10-conv}
\|v(s) - \phi\|_{H^1_0(\Omega)}^2 \leq C \e^{-\lambda_0 s} \ \mbox{ for } \ s > 0, 
\end{equation}
which also immediately yields \eqref{ent-conv}, for nonnegative solutions $v = v(x,s)$ to \eqref{eq:1.6}--\eqref{eq:1.8} and positive nondegenerate solutions $\phi = \phi(x)$ to \eqref{eq:1.10}, \eqref{eq:1.11} for any bounded $C^{1,1}$ domains. Furthermore, in~\cite{A21}, it is also proved for (possibly) sign-changing solutions that \eqref{h10-conv} is satisfied with $\lambda_0$ replaced by any
$$
0 < \lambda < \frac 2{q-1} C_q^{-2} \|\phi\|_{L^q(\Omega)}^{-(q-2)} \frac{\nu_k}{\nu_k + \lambda_q(q-1)},
$$
where $C_q$ stands for the best constant of a Sobolev-Poincar\'e inequality and which cannot however reach the sharp exponent $\lambda_0$ even for least-energy solutions to \eqref{eq:1.10}, \eqref{eq:1.11} (see Remark 3.2 of~\cite{A21}).

On the other hand, the topology of the convergence can be improved with the aid of optimal boundary regularity results developed by Jin and Xiong in~\cite{JiXi23,JiXi22+}, which is also motivated from a long-standing open question posed in~\cite{BH80}. More precisely, Jin and Xiong~\cite{JiXi23,JiXi22+} proved the optimal boundary regularity (e.g., $\partial_t^\ell u(\cdot,t) \in C^{q+1}(\overline\Omega)$ for any $\ell \in \N$) of nonnegative solutions to \eqref{eq:1.1}--\eqref{eq:1.3}, which is consistent with the regularity of separable solutions $u = u(x,t)$ to \eqref{eq:1.1}, \eqref{eq:1.2}, in smooth bounded domains, by developing Schauder estimates for some linear parabolic equations with degenerate coefficients asymptotic to $\mathrm{dist}(x,\partial \Omega)^{q-2}$ (in front) of the time-derivative, with the aid of the GHP \eqref{GHP}. Moreover, based on the optimal boundary regularity result, they also proved that \eqref{ent-conv} can be improved up to
$$
\left\|\frac{v(s)}\phi - 1\right\|_{C^q(\overline\Omega)} \leq C \e^{-\lambda_0 s} \ \mbox{ for } \ s > 1
$$
for nonnegative solutions in \emph{smooth} bounded domains.

Furthermore, Choi, McCann and Seis~\cite{McCann23} proved a dichotomy result on the rate of convergence of $v = v(x,s) \geq 0$ to (possibly) \emph{degenerate} positive solutions $\phi = \phi(x)$; more precisely, either of $\mathsf{E}(s) \lesssim \e^{-\lambda_0 s}$ or $\mathsf{E}(s) \gtrsim s^{-1}$ always holds (cf.~see also~\cite{JiXi20+}). They observed that the relative error $h(\cdot,s) := (v(\cdot,s)-\phi)/\phi$ solves
$$
\partial_s h + L_\phi(h) = \mathcal{N}(h),
$$
where $L_\phi$ is a linear elliptic operator including coefficients associated with $\phi$ and $\mathcal{N}$ is a nonlinear perturbation, which still involves $\partial_s h$ but can be handled as a small perturbation for $h$ small enough, by proving a smoothing estimate for $\partial_s h$. Then the dichotomy result follows from an ODE analysis of a reduced system. This dichotomy result also enables us to derive the sharp rate of convergence \eqref{ent-conv} for nondegenerate positive asymptotic profiles in smooth bounded domains (see also~\cite{CS23+}). We further refer the reader to the recent article~\cite{BF23+} for a comprehensive survey on this field.

As seen from the above, convergence to \emph{positive} asymptotic profiles in \emph{smooth} bounded domains has been well studied; on the other hand, results for \emph{sign-changing} asymptotic profiles are still limited. In particular, the sharp rate of convergence as in \eqref{ent-conv} and \eqref{h10-conv} has not yet been proved for sign-changing solutions. Actually, the nonlinear entropy method is deeply based on the GHP, and hence, the positivity of the asymptotic profile may be indispensable. The energy method developed in~\cite{A21} is applicable to sign-changing asymptotic profiles; however, the conclusion for sign-changing asymptotic profiles does not reach the sharp rate of convergence.

Another open question in this field is the \emph{optimality of the convergence rate} (see \eqref{ent-conv}) even for positive asymptotic profiles; indeed, there seems to be no proof, although it may be expected to be optimal in view of a formal linearlized analysis (see~\cite[\S 2]{BF21}). On the other hand, as for the porous medium case (i.e., $m > 2$ and $1 < q < 2$), the \emph{optimal} rate of convergence to the (unique) positive asymptotic profile was determined by means of the classical comparison argument in~\cite{ArPel81}, and moreover, a finer asymptotics has also been investigated in a recent paper~\cite{JiROXi22+}.

The first purpose of the present paper is to prove \eqref{h10-conv} for each (possibly) sign-changing solution $v = v(x,s)$ of \eqref{eq:1.6}--\eqref{eq:1.8} which converges to a nondegenerate asymptotic profile as $s \to +\infty$. We stress that our method of proof is completely free from both the relative error convergence \eqref{rel-err-conv} and smoothing estimates, which have been developed for nonnegative solutions, but only a few results are known for sign-changing ones. Furthermore, compared to the previous results on nonnegative solutions based on the relative error convergence and smoothing estimates in~\cite{BF21,JiXi23,JiXi22+,McCann23} as well as results in~\cite{A21}, we need no assumption on the smoothness of domains. The second purpose of the present paper is to prove the \emph{optimality} of the convergence rate (see \eqref{ent-conv}) to nondegenerate \emph{least-energy asymptotic profiles} (see below for definition) with the aid of the improved convergence result mentioned above.

The main results of the present paper are stated as follows:
\begin{theorem}[Sharp rate of convergence]\label{T:sc-conv}
Let $\Omega$ be any bounded domain of $\R^N$ with boundary $\partial \Omega$. Under the assumption \eqref{hypo}, let $v = v(x,s)$ be a {\rm (}possibly{\rm )} \emph{sign-changing} energy solution to \eqref{eq:1.6}--\eqref{eq:1.8} and let $\phi = \phi(x)$ be a nondegenerate nontrivial solution to \eqref{eq:1.10}, \eqref{eq:1.11} such that $v(s_n) \to \phi$ strongly in $H^1_0(\Omega)$ for some sequence $s_n \to +\infty$. Then there exists a constant $C \geq 0$ such that
\begin{align}
0 \leq J(v(s)) - J(\phi) &\leq C \e^{-\lambda_0 s} \quad \mbox{ for all } \ s \geq 0,\label{J-conv}\\
\|v(s) - \phi\|_{H^1_0(\Omega)}^2 &\leq C \e^{-\lambda_0 s} \quad \mbox{ for all } \ s \geq 0,\label{H10-ec}
\end{align}
where $\lambda_0$ is defined as in \eqref{ent-conv}.
\end{theorem}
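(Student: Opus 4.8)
The plan is to run an energy (entropy) method directly on the rescaled flow, but sharpened by a \emph{quantitative gradient inequality} whose constant is tied to the spectral gap $\nu_k$ of the linearized operator $\Lphi$, rather than to a Sobolev--Poincar\'e constant as in~\cite{A21}. First I would record the basic energy identity for \eqref{eq:1.6}--\eqref{eq:1.8}: testing the equation with $\partial_s v$ gives
\[
\frac{\d}{\d s} \bigl( J(v(s)) - J(\phi)\bigr) = - \int_\Omega |v(x,s)|^{q-2} |\partial_s v(x,s)|^2 \, \d x \le 0,
\]
so $s\mapsto J(v(s))$ is nonincreasing; since $v(s_n)\to\phi$ in $H^1_0(\Omega)$ and $J$ is continuous there, the limit of $J(v(s))$ along the whole trajectory equals $J(\phi)$, which already yields $J(v(s))-J(\phi)\ge 0$ for all $s$. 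The heart of the matter is to bound the \emph{dissipation} from below by the \emph{energy gap} itself: I would prove an inequality of the form
\[
\int_\Omega |v(x,s)|^{q-2} |\partial_s v(x,s)|^2 \, \d x \;\ge\; \lambda_0 \bigl( J(v(s)) - J(\phi)\bigr)
\]
valid for $s$ large, where $\lambda_0 = 2\nu_k/(q-1)$. Granting this, setting $D(s) := J(v(s))-J(\phi)\ge 0$ gives $D'(s) \le -\lambda_0 D(s)$, hence $D(s)\le D(s_0)\e^{-\lambda_0(s-s_0)}$, i.e.\ \eqref{J-conv}; and the estimate \eqref{H10-ec} follows by absorbing: near the nondegenerate critical point $\phi$, the energy gap $J(v)-J(\phi)$ is comparable (from below, after controlling the nonlinear remainder) to $\tfrac12\langle \Lphi (v-\phi), v-\phi\rangle$ plus the constraint from $v\in\mathcal X$, and this quadratic form is coercive on $H^1_0(\Omega)$ up to the finitely many nonpositive modes of $\Lphi$, which are handled using that $v$ stays on the invariant phase set $\mathcal{X}$ (this is where the least-energy / general nondegenerate structure enters, ruling out motion along bad directions, or at worst producing only a controlled correction).

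The quantitative gradient inequality is the crux, and it is where I expect the main obstacle. The cleanest route is to pass through the linearization: write $w = v - \phi$ and expand $J(v) - J(\phi) = \tfrac12 \mathcal{Q}_\phi(w) + R(w)$, where $\mathcal{Q}_\phi(w) = \int_\Omega |\nabla w|^2 - \lambda_q(q-1)\int_\Omega |\phi|^{q-2} w^2$ is the quadratic form of $\Lphi$ and $R(w) = o(\|w\|_{H^1_0}^2)$ by nondegeneracy and the subcriticality $q<2^*$ (so the nonlinear terms are compactly controlled). Similarly, $J'(v) = \Lphi w + r(w)$ with $r(w) = o(\|w\|_{H^1_0})$. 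Because $v(s)\to\phi$ along a subsequence and $J(v(s))$ is monotone, one first upgrades this to full convergence $v(s)\to\phi$ in $H^1_0(\Omega)$ as $s\to\infty$ — this uses \eqref{v-bdd}, the compactness inherent in the Palais--Smale-type structure, and the fact that $\phi$ is an \emph{isolated} critical point modulo the group action (nondegeneracy again). Once $\|w(s)\|_{H^1_0}$ is small, the remainders are genuinely subordinate, and the desired inequality reduces to the linear spectral statement: on the orthogonal complement (in the weighted $L^2(|\phi|^{q-2}\d x)$ inner product) of the kernel — which is trivial by nondegeneracy — one has $\mathcal{Q}_\phi(w) \ge \nu_k \|w\|_{L^2(|\phi|^{q-2})}^2$ for the positive part of the spectrum, while the finitely many negative eigenvalues must be excluded because the trajectory lies on $\mathcal{X}$ and $J$ is nonincreasing (a solution genuinely decreasing its energy cannot asymptotically align with an unstable direction). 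Matching the constant: the dissipation $\int |v|^{q-2}|\partial_s v|^2$ is, to leading order, $(q-1)^{-2}$ times $\int |\phi|^{q-2} |\partial_s(|v|^{q-2}v)|^2$, i.e.\ $(q-1)^{-2}\|\Lphi w\|_{\ast}^2$ in the weighted dual norm, and Young's inequality relating $\|\Lphi w\|_\ast^2$ and $\mathcal{Q}_\phi(w)$ with the spectral gap produces exactly the factor $2\nu_k/(q-1) = \lambda_0$.

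The remaining steps are bookkeeping: (i) justify all the testing procedures at the level of energy solutions (approximation / Steklov averaging, as in~\cite{AK13,A21}); (ii) make the $o(\cdot)$ estimates for $R$ and $r$ quantitative and uniform once $\|w(s)\|_{H^1_0}\le\epsilon_0$, using $2<q<2^*$ so that $w\mapsto |\phi+w|^{q-2}(\phi+w)$ is $C^1$ near $\phi$ from $H^1_0$ into $H^{-1}$; (iii) fix $s_0$ large enough that the smallness holds for all $s\ge s_0$ and run the differential inequality; (iv) cover $[0,s_0]$ trivially by \eqref{v-bdd} to get the stated estimates "for all $s\ge 0$" after enlarging $C$. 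The one genuinely delicate point, beyond the spectral bookkeeping, is proving full-trajectory convergence $v(s)\to\phi$ (not merely along $s_n$) \emph{without} invoking the Global Harnack Principle or relative-error smoothing — here I would lean on the gradient-flow structure: monotonicity of $J$ forces the $\omega$-limit set to lie in $\{J = J(\phi)\}\cap\{\text{critical points}\}$, and nondegeneracy makes $\phi$ isolated among critical points of that energy level (up to sign, handled by connectedness of the trajectory), so the $\omega$-limit set is the single point $\phi$. This is precisely the place where the hypothesis that $\phi$ is \emph{nondegenerate} is essential and where sign-changing profiles are accommodated with no extra cost.
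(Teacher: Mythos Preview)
Your overall architecture (energy dissipation + quantitative gradient inequality tied to the spectral gap $\nu_k$) matches the paper's, but the proposal has a genuine gap at the step you yourself flag as ``the crux.''

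\textbf{The weight swap is the missing idea.} You write that the dissipation $\int |v|^{q-2}|\partial_s v|^2$ is ``to leading order'' $(q-1)^{-2}\int |\phi|^{q-2}|\partial_s(|v|^{q-2}v)|^2$, and then proceed with the weighted dual norm and the spectrum of $\Lphi$. But the exact identity is
\[
\int_\Omega |v|^{q-2}|\partial_s v|^2\,\d x \;=\; (q-1)^{-2}\int_\Omega |v(s)|^{2-q}\,\bigl|\partial_s(|v|^{q-2}v)\bigr|^2\,\d x,
\]
with the \emph{dynamic} singular weight $|v(s)|^{2-q}$, not $|\phi|^{2-q}$. Replacing $|v(s)|^{2-q}$ by $|\phi|^{2-q}$ is precisely the step that requires a uniform bound on $v(s)/\phi$ (relative-error convergence / GHP), which is not available for sign-changing solutions and is exactly what the theorem is meant to avoid. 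For sign-changing $\phi$, the zero sets of $v(s)$ and $\phi$ need not coincide, so the ratio $|v(s)|/|\phi|$ is uncontrolled near those sets and your ``leading order'' replacement is unjustified. The paper's resolution is to \emph{keep the dynamic weight}: it introduces the $s$-dependent Hilbert space $\H_s' = L^2(\Omega\setminus Z(s);|v(s)|^{2-q}\d x)$, sets up the eigenvalue problem $-\Delta e = \mu^s |v(s)|^{q-2}e$ with $s$-dependent eigenvalues $\mu_j^s$, proves $\mu_j^s\to\mu_j$ quantitatively via the min--max characterization, and derives the gradient inequality $J(v(s))-J(\phi)\le\bigl(\tfrac{1}{2\nu_k^s}+C\|v(s)-\phi\|^\rho\bigr)\|J'(v(s))\|_{\H_s'}^2$ entirely in terms of the $s$-dependent objects. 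Only \emph{after} combining this with the energy inequality does $\nu_k^s\to\nu_k$ deliver the rate $\lambda_0$.

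\textbf{The negative modes do not need a dynamical exclusion argument.} Your claim that the finitely many negative eigenvalues of $\Lphi$ ``must be excluded because the trajectory lies on $\mathcal X$ and $J$ is nonincreasing'' is not a proof, and it is not how the paper proceeds. In the paper's formulation one bounds $\langle J'(v),\,\Ls^{-1}J'(v)\rangle$ from \emph{above}; expanding $J'(v)$ in the eigenbasis, the terms with $\mu_j^s<\lambda_q(q-1)$ (negative $\nu_j^s$) contribute \emph{negatively} to this quantity and can simply be dropped. No information about $\mathcal X$ is used here. Likewise, for the passage from \eqref{J-conv} to \eqref{H10-ec} the paper does not try to recover coercivity of $\mathcal Q_\phi$ by appealing to $\mathcal X$; instead it uses the second Lyapunov functional $K$ and the exact identity $J(w)-\lambda_q K(w)=\tfrac12\|J'(w)\|_{H^{-1}}^2$, which is directly coercive near $\phi$ by nondegeneracy, together with $K(v(s))\ge K(\phi)$. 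Your route via ``the constraint from $v\in\mathcal X$ rules out motion along bad directions'' would need a separate argument that you have not supplied.
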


In~\cite{A21}, some examples of nondegenerate sign-changing asymptotic profiles are exhibited. In particular, for $N \geq 2$, nondegenerate sign-changing asymptotic profiles are constructed in dumbbell domains, and moreover, their exponential stability is also proved under certain symmetry of initial data; actually, sign-changing asymptotic profiles are never asymptotically stable for general initial data (see~\cite[Theorem 3]{AK13}). Furthermore, the exponential convergence \eqref{H10-ec} can also be rephrased with the original variables as follows (cf.~see \eqref{ext_time}, \eqref{qconv}):
$$
\| (t_*-t)^{-1/(q-2)} u(t) - \phi \|_{H^1_0(\Omega)}^2 \leq C \left(\frac{t_* - t}{t_*}\right)_+^{\lambda_0} \quad \mbox{ for } \ t \geq 0.
$$

In what follows, the \emph{least-energy solutions} to \eqref{eq:1.10}, \eqref{eq:1.11} (or \emph{least-energy asymptotic profiles}) mean nontrivial weak solutions to \eqref{eq:1.10}, \eqref{eq:1.11} minimizing the energy $J$ among all the weak nontrivial solutions to \eqref{eq:1.10}, \eqref{eq:1.11}. The least positive eigenvalue of \eqref{Lphi-ep} for any nondegenerate least-energy asymptotic profile $\phi$ is the second one, that is, $k = 2$ by~\cite{Lin}. Now we have the following corollary, which improves an exponential stability result in~\cite[Corollary 1.3]{A21}:
\begin{corollary}[Quantitative exponential stability of least-energy profiles]\label{C:stbl}
Let $\Omega$ be any bounded domain of $\R^N$ with boundary $\partial \Omega$. Assume \eqref{hypo} and let $\phi$ be a nondegenerate \emph{least-energy} solution to \eqref{eq:1.10}, \eqref{eq:1.11}. Then there exists constants $\delta_0, C > 0$ satisfying the following\/{\rm :} Let $v_0 \in \mathcal{X}$ be such that $\|v_0 - \phi\|_{H^1_0(\Omega)} < \delta_0$ and let $v = v(x,s)$ be the energy solution to \eqref{eq:1.6}--\eqref{eq:1.8} such that $v(0) = v_0$. Then it holds that
\begin{align}
0 \leq J(v(s)) - J(\phi) &\leq C \left(J(v_0)-J(\phi)\right) \e^{-\lambda_0 s},\label{J-stbl}\\
\|v(s) - \phi\|_{H^1_0(\Omega)}^2 &\leq C \left(J(v_0)-J(\phi)\right) \e^{-\lambda_0 s},\label{H10-stbl}
\end{align}
where $\lambda_0$ is defined as in \eqref{ent-conv}, for all $s \geq 0$.
\end{corollary}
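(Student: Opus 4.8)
The plan is to derive Corollary~\ref{C:stbl} from Theorem~\ref{T:sc-conv} by adding two ingredients: (a) the local asymptotic stability, on the invariant set $\mathcal X$, of nondegenerate least-energy profiles; and (b) a quantitative refinement of the estimate of Theorem~\ref{T:sc-conv} that keeps track of the dependence of the constant on $J(v_0)-J(\phi)$.

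First I would put $D(s):=J(v(s))-J(\phi)$ and record two structural facts. Testing $\partial_s(|v|^{q-2}v)=-J'(v)$ by $\partial_s v$ gives $\tfrac{\d}{\d s}J(v(s))=-(q-1)\int_\Omega|v(s)|^{q-2}|\partial_s v(s)|^2\,\d x\le 0$, so $D$ is nonincreasing and $D(s)\le D(0)$ for all $s\ge 0$. Since $\phi$ is a nondegenerate \emph{least-energy} solution, it is a nondegenerate local minimizer of $J$ constrained to $\mathcal X$ — this is exactly where the least-energy hypothesis enters, general sign-changing profiles being unstable by~\cite[Theorem~3]{AK13} — so, after shrinking the radius, there exist $c_0,\eta>0$ with
\begin{equation}\label{eq:prop-coerc}
c_0\|w-\phi\|_{H^1_0(\Omega)}^2\ \le\ J(w)-J(\phi)\ \le\ c_0^{-1}\|w-\phi\|_{H^1_0(\Omega)}^2\qquad\text{for }w\in\mathcal X,\ \|w-\phi\|_{H^1_0(\Omega)}<\eta,
\end{equation}
the right inequality being a Taylor expansion of $J$ and the left one using nondegeneracy together with the fact (due to~\cite{Lin}) that $\nu_2$ is the least positive eigenvalue of~\eqref{Lphi-ep} and that $\mathcal X$ is transverse at $\phi$ to the unique negative direction of $\Lphi$. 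Combining \eqref{eq:prop-coerc} with the monotonicity of $D$ gives Lyapunov stability, and then precompactness of the trajectories (from the quasi-convergence result recalled above) together with the isolatedness of $\phi$ among solutions of~\eqref{eq:1.10}--\eqref{eq:1.11} forces $v(s)\to\phi$ in $H^1_0(\Omega)$ as $s\to\infty$; alternatively one may invoke~\cite[Corollary~1.3]{A21}. Hence Theorem~\ref{T:sc-conv} applies and yields $0\le D(s)\le C\e^{-\lambda_0 s}$ for some a priori solution-dependent $C\ge 0$, which with \eqref{eq:prop-coerc} already proves \emph{qualitative} exponential stability.

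The remaining, and main, point is to make $C$ uniform over the $\delta_0$-ball and \emph{linear in} $D(0)$. I would do this by revisiting the proof of Theorem~\ref{T:sc-conv} and isolating the estimate it produces once $v$ lies in the neighbourhood $\{\|\cdot-\phi\|_{H^1_0(\Omega)}<\eta\}$: writing $v=\phi+z$ and linearizing~\eqref{eq:1.6} one gets $(q-1)|\phi|^{q-2}\partial_s z+\Lphi z=\mathcal N(z)$ with a superlinear remainder $\|\mathcal N(z)\|\lesssim\|z\|^{1+\gamma}$ in suitable norms, and since $v(s)\in\mathcal X$ for all $s$, $z(s)$ remains in the spectral subspace complementary to the first eigenfunction $e_1$ of~\eqref{Lphi-ep}, on which the effective linear evolution decays exactly at the rate $\lambda_0=2\nu_2/(q-1)$. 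A Duhamel/energy comparison then gives $\tfrac{\d}{\d s}D(s)\le-\lambda_0 D(s)+(\text{superlinear forcing})$ for \emph{all} $s\ge 0$ once $\delta_0$ is small enough (by the previous paragraph and \eqref{eq:prop-coerc}). Feeding in a first, suboptimal, rate and bootstrapping finitely many times promotes the forcing to decay strictly faster than $\e^{-\lambda_0 s}$, whence $D(s)\le C_1 D(0)\e^{-\lambda_0 s}$ with $C_1$ depending only on $\phi,q,\Omega$; the linearity in $D(0)$ survives the bootstrap because the forcing is superlinear and the bound $D(s)\le D(0)$ caps its contribution over bounded time intervals. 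This is \eqref{J-stbl}, and \eqref{H10-stbl} then follows from the left-hand inequality in \eqref{eq:prop-coerc}, namely $\|v(s)-\phi\|_{H^1_0(\Omega)}^2\le c_0^{-1}D(s)\le c_0^{-1}C_1 D(0)\e^{-\lambda_0 s}$. I expect the main obstacle to be precisely this last paragraph: extracting from the proof of Theorem~\ref{T:sc-conv} a differential inequality with the \emph{sharp} linear coefficient $\lambda_0$ — which hinges on the constraint $\mathcal X$ genuinely annihilating the negative mode $e_1$ — and controlling all constants uniformly, while keeping the $D(0)$-dependence linear, as $v_0$ ranges over the $\delta_0$-ball.
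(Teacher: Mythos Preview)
Your overall framework---invoke stability to trap the orbit near $\phi$, then re-enter the machinery of Theorem~\ref{T:sc-conv} and track constants---matches the paper's. But the mechanism you propose for extracting the sharp coefficient $\lambda_0$ has a genuine gap, and the paper's actual argument is both different and much shorter.

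The claim that ``since $v(s)\in\mathcal X$ for all $s$, $z(s)$ remains in the spectral subspace complementary to the first eigenfunction $e_1$'' is not correct. The set $\mathcal X=\{t_*(\cdot)=1\}$ is only a codimension-one hypersurface transverse to $\phi\propto e_1$; its tangent space at $\phi$ is $\ker t_*'(\phi)$, which is \emph{not} $e_1^\perp$ in either the $H^1_0$ or the weighted-$L^2$ inner product, and even if it were, $z(s)$ would only lie there to first order. So the constraint $\mathcal X$ does not annihilate the negative mode, and your linearization/Duhamel scheme would face genuine exponential growth along $e_1$. The paper never uses $\mathcal X$ this way. Instead the sharp rate comes from the quantitative gradient inequality (Lemma~\ref{L:GI}): in the identity
\[
\langle J'(v(s)),\Ls^{-1}J'(v(s))\rangle_{H^1_0}=\sum_{j\ge 1}(\beta_j^s)^2\,\frac{\mu_j^s}{\mu_j^s-\lambda_q(q-1)},
\]
the terms with $j<k$ carry a negative sign and are simply \emph{dropped}, yielding the bound $\le (\nu_k^s)^{-1}\|J'(v(s))\|_{\H_s'}^2$ without any orthogonality hypothesis on $v(s)-\phi$. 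Combined with the energy inequality~\eqref{eneq2}, this produces the differential inequality $\tfrac{\d H}{\d s}+\tfrac{2\nu_k}{q-1}H\le C\|v(s)-\phi\|_{H^1_0}^\rho H$ for $s\ge s_1$.

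Relatedly, the left inequality in your~\eqref{eq:prop-coerc} is not a consequence of transversality alone: a codimension-one subspace transverse to the negative direction of an index-one quadratic form need not be a subspace of positivity. The paper obtains the needed coercivity via the second Lyapunov functional $K$ and the identity $J-\lambda_qK=\tfrac12\|J'(\cdot)\|_{H^{-1}}^2$ (Lemma~\ref{L:coer}), together with $K(v(s))\ge K(\phi)$ along the flow; this is packaged as Lemma~\ref{L:JtoH10}.

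Once these two lemmas are in hand, the paper's proof of the corollary is a one-liner: by \cite[Theorem~2]{AK13} choose $\delta_0$ so small that $\sup_{s\ge 0}\|v(s)-\phi\|_{H^1_0}$ is as small as needed; then every threshold $s_0,s_1$ in the proof of Theorem~\ref{T:sc-conv} may be taken equal to $0$, all constants become uniform in $v_0$, and integrating the differential inequality above from $s=0$ gives $H(s)\le M\,H(0)\,\e^{-\lambda_0 s}$ directly---no bootstrap is required, and the linear dependence on $H(0)=J(v_0)-J(\phi)$ is automatic. Lemma~\ref{L:JtoH10} then yields~\eqref{H10-stbl}.
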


The corollary mentioned above further enables us to prove the optimality of the rate of convergence provided in Theorem \ref{T:sc-conv} (and Corollary \ref{C:stbl}) for \emph{least-energy} asymptotic profiles.
\begin{theorem}[Optimality of the convergence rate]\label{T:opt}
Let $\Omega$ be any bounded domain of $\R^N$ with boundary $\partial \Omega$. Assume \eqref{hypo} and let $\phi$ be a nondegenerate \emph{least-energy} solution to \eqref{eq:1.10}, \eqref{eq:1.11} and let $\mathbb{P}_2$ be the spectral projection onto the eigenspace $E_2$ corresponding to the least positive eigenvalue $\nu_2$ of the eigenvalue problem \eqref{Lphi-ep}. Let $\xi_\vep \in H^1_0(\Omega)$, $\vep > 0$ be such that
\begin{equation}\label{opt-hyp}
\begin{cases}
\|\xi_\vep\|_{H^1_0(\Omega)} = O(\vep) \ \mbox{ as } \ \vep \to 0_+,\\
\liminf_{\vep \to 0_+} \vep^{-1} \|\mathbb{P}_2(\xi_\vep)\|_{H^1_0(\Omega)} > 0.
\end{cases}
\end{equation}
Set $u_{0,\vep} := \phi + \xi_\vep$ and $v_{0,\vep} := t_*(u_{0,\vep})^{-1/(q-2)} u_{0,\vep} \in \mathcal X$. Let $v_\vep = v_\vep(x,s)$ be the energy solution to \eqref{eq:1.6}--\eqref{eq:1.8} for the initial datum $v_{0,\vep}$. Then there exists $\vep_0 > 0$ such that, for any $\vep \in (0,\vep_0)$,
\begin{align}
c_{\vep} \e^{-\lambda_0 s} &\leq \int_\Omega |v_\vep(s)-\phi|^2 \phi^{q-2} \, \d x\nonumber\\
&\leq C \|v_\vep(s)-\phi\|_{H^1_0(\Omega)}^2
\leq C_{\vep} \e^{-\lambda_0 s} \quad \mbox{ for } \ s \geq 0
\label{opt-est}
\end{align}
for some positive constants $c_{\vep}, C_{\vep}, C > 0$. Hence the rate of convergence provided in Theorem {\rm \ref{T:sc-conv} (}and Corollary {\rm \ref{C:stbl})} is optimal for least-energy asymptotic profiles.
\end{theorem}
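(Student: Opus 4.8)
\bigskip
\noindent\textbf{Proof plan.}\quad
The plan is to obtain the middle and the rightmost estimates of \eqref{opt-est} from Corollary \ref{C:stbl}, and the leftmost (lower) estimate by a careful linearization of \eqref{eq:1.6} around $\phi$ in the weighted inner product $\langle f,g\rangle_\phi:=\int_\Omega fg\,\phi^{q-2}\,\d x$ attached to \eqref{Lphi-ep}, tracking the component of $v_\vep(s)-\phi$ along the slowest stable eigendirection of the linearization. Since $\phi$ is a least-energy solution it has a fixed sign, say $\phi>0$, and $\phi\in L^\infty(\Omega)$ by elliptic regularity; hence $\phi^{q-2}\le\|\phi\|_{L^\infty(\Omega)}^{q-2}$ and the Poincar\'e inequality give the middle inequality of \eqref{opt-est} at once. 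Note also that $-\Delta\phi=\lambda_q\phi^{q-1}$ makes $\phi$ a stationary solution of \eqref{eq:1.6}, so $t_*(\phi)=1$, i.e.\ $\phi\in\mathcal X$.

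As $\|\xi_\vep\|_{H^1_0(\Omega)}\to0$, the continuity of the extinction-time functional on $H^1_0(\Omega)\setminus\{0\}$ yields $\tau_\vep:=t_*(u_{0,\vep})^{-1/(q-2)}\to1$ and hence $v_{0,\vep}=\tau_\vep u_{0,\vep}\to\phi$ in $H^1_0(\Omega)$; in particular $\|v_{0,\vep}-\phi\|_{H^1_0(\Omega)}<\delta_0$ for $\vep$ small, so Corollary \ref{C:stbl} applies and gives $\|v_\vep(s)-\phi\|_{H^1_0(\Omega)}^2\le C_\vep\e^{-\lambda_0 s}$ with $0\le C_\vep:=C(J(v_{0,\vep})-J(\phi))$, which is the rightmost estimate of \eqref{opt-est}. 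The refinement I will need is $C_\vep=O(\vep^2)$: since $v_{0,\vep}$ lies on the ray $\{\lambda u_{0,\vep}\colon\lambda>0\}$, along which $J$ attains its maximum value $F(u_{0,\vep}):=\tfrac{q-2}{2q}\big(\|\nabla u_{0,\vep}\|_{L^2(\Omega)}^{2q}/(\lambda_q\|u_{0,\vep}\|_{L^q(\Omega)}^{q})^{2}\big)^{1/(q-2)}$, and $F$ is of class $C^2$ near $\phi$ (as is $J$, since $2<q<2^*$) with $F(\phi)=J(\phi)$ and, using $\|\nabla\phi\|_{L^2(\Omega)}^2=\lambda_q\|\phi\|_{L^q(\Omega)}^q$, with $F'(\phi)=0$, we get $0\le J(v_{0,\vep})-J(\phi)\le F(u_{0,\vep})-F(\phi)=O(\|\xi_\vep\|_{H^1_0(\Omega)}^2)=O(\vep^2)$.

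For the lower bound, set $h_\vep(s):=v_\vep(s)-\phi$, so $\|h_\vep(s)\|_{H^1_0(\Omega)}^2\le C_\vep\e^{-\lambda_0 s}$ by the above. The eigenfunctions of \eqref{Lphi-ep} are orthogonal in both $H^1_0(\Omega)$ and $\langle\cdot,\cdot\rangle_\phi$ and form a complete system, so $\mathbb P_2$ is the same operator for either inner product; since $\Lphi\phi=\nu_1\phi^{q-2}\phi$ with $\nu_1=-\lambda_q(q-2)<0$ and $E_1=\mathrm{span}\{\phi\}$ (simple, as $\phi$ has a fixed sign), we have $\mathbb P_2\phi=0$; and $\lambda_0=2\nu_2/(q-1)$, so $\mu_2:=\nu_2/(q-1)=\lambda_0/2$. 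For $\vep$ small, $\mathbb P_2\xi_\vep\ne0$ by \eqref{opt-hyp}, so put $\psi_\vep:=\mathbb P_2\xi_\vep/\|\mathbb P_2\xi_\vep\|_\phi\in E_2$, a unit vector for $\langle\cdot,\cdot\rangle_\phi$. Subtracting $-\Delta\phi=\lambda_q\phi^{q-1}$ from \eqref{eq:1.6} and using the Taylor expansion $|v_\vep|^{q-2}v_\vep=\phi^{q-1}+(q-1)\phi^{q-2}h_\vep+R(h_\vep)$ with $|R(h)|\le C|h|^{\beta}(\phi+|h|)^{(q-3)_+}$ pointwise and $\beta:=\min(2,q-1)\in(1,2]$, then testing with $\psi_\vep$ and using $\int_\Omega(\Lphi h_\vep)\psi_\vep\,\d x=\nu_2\langle h_\vep,\psi_\vep\rangle_\phi$, I obtain for $a_\vep(s):=\langle h_\vep(s),\psi_\vep\rangle_\phi$ an identity of the form
\[
\frac{\d}{\d s}\Big(a_\vep+\tfrac1{q-1}\varrho_\vep\Big)=-\mu_2\Big(a_\vep+\tfrac1{q-1}\varrho_\vep\Big)+\kappa\,\varrho_\vep,\qquad \varrho_\vep(s):=\int_\Omega R(h_\vep(s))\,\psi_\vep\,\d x,
\]
with $\kappa$ a constant; the shift by $\varrho_\vep$ absorbs the contribution $\partial_s R(h_\vep)$ produced by the left side of \eqref{eq:1.6}, and the identity is first derived in its integrated form to avoid regularity issues. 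Using $\|h_\vep(s)\|_{H^1_0(\Omega)}^2\le C_\vep\e^{-\lambda_0 s}$, the embedding $H^1_0(\Omega)\hookrightarrow L^{q-1}(\Omega)$ (valid as $q-1<2^*$) and $\|\psi_\vep\|_{L^\infty(\Omega)}\le C$ (finite dimension of $E_2$), one gets $|\varrho_\vep(s)|\le C\|h_\vep(s)\|_{H^1_0(\Omega)}^{\beta}\le C\,C_\vep^{\beta/2}\e^{-\beta\mu_2 s}$.

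Solving the scalar ODE by Duhamel's formula and using $\beta>1$ to estimate the source term, one finds $|a_\vep(s)|\ge\e^{-\mu_2 s}\big(|a_\vep(0)|-C'C_\vep^{\beta/2}\big)$ for all $s\ge0$. Now $\mathbb P_2\phi=0$ gives $a_\vep(0)=\langle\mathbb P_2(v_{0,\vep}-\phi),\psi_\vep\rangle_\phi=\tau_\vep\|\mathbb P_2\xi_\vep\|_\phi$; since $\tau_\vep\to1$ and $\|\mathbb P_2\xi_\vep\|_\phi\asymp\|\mathbb P_2\xi_\vep\|_{H^1_0(\Omega)}\ge c'\vep$ for $\vep$ small (by \eqref{opt-hyp} and the equivalence of norms on the finite-dimensional $E_2$), we have $|a_\vep(0)|\ge c\vep$; as $C_\vep=O(\vep^2)$, the error $C'C_\vep^{\beta/2}=O(\vep^{\beta})=o(\vep)$, whence there is $\vep_0>0$ with $|a_\vep(s)|\ge\tfrac c2\,\vep\,\e^{-\mu_2 s}$ for all $s\ge0$ and $0<\vep<\vep_0$. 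Consequently
\[
\int_\Omega|v_\vep(s)-\phi|^2\phi^{q-2}\,\d x=\|h_\vep(s)\|_\phi^2\ge\|\mathbb P_2 h_\vep(s)\|_\phi^2\ge a_\vep(s)^2\ge c_\vep\,\e^{-\lambda_0 s},\qquad c_\vep:=\tfrac{c^2}4\vep^2>0,
\]
which together with the two upper estimates proves \eqref{opt-est}, hence the claimed optimality. The two steps I expect to be delicate are: (i) making the projected linearization rigorous for a mere energy solution, especially controlling the time-derivative $\partial_s R(h_\vep)$ --- dealt with above via the $\varrho_\vep$-shift and the integrated equation; and, above all, (ii) the quantitative bound $C_\vep=O(\vep^2)$ (obtained here from the Nehari-type identity for $J$ along rays, but equivalently from a local Lipschitz bound on $t_*$ at $\phi$), which is exactly what forces the ``slow'' $E_2$-mass $\asymp\vep$ carried by $v_{0,\vep}-\phi$ to strictly dominate the nonlinear remainder $\asymp\vep^{\beta}$ uniformly in $s$. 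Note that the argument genuinely relies on the \emph{sharp} exponent $\lambda_0$ furnished by Corollary \ref{C:stbl}, and not just on convergence, since the remainder must decay strictly faster than $\e^{-\mu_2 s}$.
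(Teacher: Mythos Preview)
Your proof is correct and follows the same overall strategy as the paper: project the difference onto the slowest stable eigenspace $E_2$, derive a scalar ODE with a superlinear source, and use Duhamel together with the sharp upper bound from Corollary~\ref{C:stbl} to show the $E_2$-component decays no faster than $\e^{-\mu_2 s}$. Two implementation details differ and are worth recording. First, the paper works with $w(s):=(-\Delta)^{-1}\bigl(|v|^{q-2}v-|\phi|^{q-2}\phi\bigr)$ and projects in $H^1_0(\Omega)$ to obtain an exact ODE for $w_2=\mathbb P_2 w$, whereas you test the equation against a fixed $\psi_\vep\in E_2$ in the weighted $L^2$ inner product and absorb the time-derivative of the Taylor remainder via the $\varrho_\vep$-shift; these are equivalent formulations of the same linearization. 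Second, for the crucial bound $J(v_{0,\vep})-J(\phi)=O(\vep^2)$ the paper first proves $t_*(u_{0,\vep})=1+O(\vep)$ from explicit extinction-time estimates (Lemma~\ref{L:cep}), while your Nehari-type argument via $F(u)=\max_{\lambda>0}J(\lambda u)$ with $F'(\phi)=0$ bypasses any quantitative control of $t_*$ altogether and is both shorter and more robust.
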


Moreover, in \S \ref{S:faster}, we shall also construct an initial datum $v_0 \in \mathcal X$ for which the energy solution $v = v(x,s)$ to \eqref{eq:1.6}--\eqref{eq:1.8} converges to $\phi$ faster than $\e^{- \frac{\nu_2}{q-1} s}$ as $s \to +\infty$ (see Theorem \ref{T:faster} below).

\prf{
Moreover, we have the following corollary, which completely extends the main result of~\cite{BF21} to (possibly) sign-changing solutions as well as to less regular domains:
\begin{corollary}\label{C:ent-conv}
Under the same assumptions as in Theorem {\rm \ref{T:sc-conv}}, there exists a constant $C \geq 0$ such that
$$
\int_\Omega |v(s)-\phi|^2 |\phi|^{q-2} \, \d x \leq C \e^{-\lambda_0 s} \ \mbox{ for all } \ s \geq 0,
$$
where $\lambda_0$ is defined as in \eqref{ent-conv}.
\end{corollary}

\begin{proof}
Since $\phi$ is bounded in $\Omega$, the conclusion immediately follows from \eqref{H10-ec} as well as the observation,
$$
\int_\Omega |w|^2 |\phi|^{q-2} \, \d x \leq \|\phi\|_{L^\infty(\Omega)}^{q-2} \|w\|_{L^2(\Omega)}^2 \quad \mbox{ for } \ w \in L^2(\Omega),
$$
which completes the proof.
\end{proof}
}

The present paper is composed of six sections. In Section \ref{S:pre}, we recall some preliminary facts, e.g., regularity of energy solutions and notions of stability for asymptotic profiles. Section \ref{S:sc-conv} is devoted to proofs of Theorem \ref{T:sc-conv} and Corollary \ref{C:stbl}. In Section \ref{S:alt}, we also discuss an alternative proof of Theorem \ref{T:sc-conv} as an independent interest. In Section \ref{S:opt}, the optimality of the convergence rate to least-energy asymptotic profiles is proved (see Theorem \ref{T:opt}). Finally, Section \ref{S:faster} presents a construction of well-prepared initial data for which rescaled solutions converge to least-energy asymptotic profiles faster than the optimal convergence rate (see Theorem \ref{T:faster} below).

\bigskip
\noindent
{\bf Notation.} Let $A \subset \R^N$ be an $N$-dimensional Lebesgue measurable set and denote by $\mathcal{M}(A)$ the set of all Lebesgue measurable functions defined on $A$ with values in $\R$. We denote by $C$ a generic nonnegative constant which may vary from line to line. We denote by $H^{-1}(\Omega)$ the dual space of the Sobolev space $H^1_0(\Omega)$ equipped with the inner product $(u,v)_{H^1_0(\Omega)} = \int_\Omega \nabla u \cdot \nabla v \, \d x$ for $u,v \in H^1_0(\Omega)$. Moreover, $\langle \cdot, \cdot \rangle_{H^1_0(\Omega)}$ stands for the duality pairing between $H^1_0(\Omega)$ and $H^{-1}(\Omega)$. Furthermore, an inner product of $H^{-1}(\Omega)$ is naturally defined as
\begin{equation}\label{H-1product}
(f,g)_{H^{-1}(\Omega)} = \langle f, (-\Delta)^{-1}g\rangle_{H^1_0(\Omega)} \quad \mbox{ for } \ f,g \in H^{-1}(\Omega),
\end{equation}
which also gives $\|f\|_{H^{-1}(\Omega)}^2 = (f,f)_{H^{-1}(\Omega)}$ for $f \in H^{-1}(\Omega)$. Then $-\Delta$ is a duality mapping (Riesz mapping) between $H^1_0(\Omega)$ and $H^{-1}(\Omega)$, that is,
\begin{align*}
\|u\|_{H^1_0(\Omega)}^2 &= \|-\Delta u\|_{H^{-1}(\Omega)}^2 = \langle -\Delta u, u \rangle_{H^1_0(\Omega)},\\
\|f\|_{H^{-1}(\Omega)}^2 &= \|(-\Delta)^{-1} f\|_{H^1_0(\Omega)}^2 = \langle f, (-\Delta)^{-1} f \rangle_{H^1_0(\Omega)}
\end{align*}
for $u \in H^1_0(\Omega)$ and $f \in H^{-1}(\Omega)$.

\section{Preliminaries}\label{S:pre}

In this section, we shall collect preliminary material for later use. Throughout this paper, we are concerned with \emph{energy solutions} defined by
\begin{definition}[Energy solution]\label{D:sol}
 A function $u : \Omega \times (0,\infty) \to \mathbb R$ is called an \emph{{\rm (}energy{\rm )} solution} of \eqref{eq:1.1}--\eqref{eq:1.3}, if the following
 conditions hold true\/{\rm :}
\begin{itemize}
 \item $u \in L^\infty(0,\infty;H^1_0(\Omega))$ and 
       $|u|^{q-2}u \in W^{1,\infty}(0,\infty;H^{-1}(\Omega))$,
 \item for a.e.~$t \in (0, \infty)$, it holds that
       \begin{align}
  \left\langle \partial_t \left(|u|^{q-2}u\right)(t), \phi \right\rangle_{H^1_0}
   + \int_\Omega \nabla u(x,t) \cdot \nabla \phi(x) \,\d x
   = 0 \label{EQ}\\
	\nonumber
	 \qquad \mbox{ for all } \ \phi \in H^1_0(\Omega),
       \end{align}
       where $\langle \cdot, \cdot \rangle_{H^1_0}$ denotes the duality
       pairing between $H^1_0(\Omega)$ and its dual space
       $H^{-1}(\Omega)$,
 \item $u(\cdot,t) \to u_0$ strongly in $H^1_0(\Omega)$ as $t \to 0_+$.
\end{itemize}
Moreover, energy solutions of \eqref{eq:1.6}--\eqref{eq:1.8} are also defined analogously.
\end{definition}

One can prove the well-posedness of \eqref{eq:1.1}--\eqref{eq:1.3} in the sense of Definition \ref{D:sol} (see, e.g.,~\cite{G:EnSol},~\cite{HB3},~\cite{Vazquez}), and moreover, one can also derive $\partial_t (|u|^{(q-2)/2}u) \in L^2(0,\infty;L^2(\Omega))$ and 
\begin{align*}
\MoveEqLeft
\frac{4(q-1)}{q^2} \int^{t_2}_{t_1} \left\|\partial_t (|u|^{(q-2)/2}u)(\tau)\right\|_{L^2(\Omega)}^2 \, \d \tau + \frac 12 \|\nabla u(t_2)\|_{L^2(\Omega)}^2 \\
&\leq \frac 12 \|\nabla u(t_1)\|_{L^2(\Omega)}^2
\end{align*}
for $0 \leq t_1 < t_2 < +\infty$, and hence,
\begin{align}
&u \in C([0,\infty);L^q(\Omega)) \cap C_{\rm weak}([0,\infty);H^1_0(\Omega)) \cap C_+([0,\infty);H^1_0(\Omega)), \label{apdx1}\\
&\partial_t (|u|^{q-2}u) \in C_+([0,\infty);H^{-1}(\Omega)) \label{apdx2}
\end{align}
(see~\cite[Appendix]{A16} for more details). Here $C_{\rm weak}$ and $C_+$ stand for the sets of all weakly continuous and strongly right-continuous (vector-valued) functions, respectively. The same regularity as above can also be proved for energy solutions to \eqref{eq:1.6}--\eqref{eq:1.8}. As for nonnegative solutions, their positivity and classical regularity in smooth domains are proved in~\cite{DKV91,JiXi23,JiXi22+}; on the other hand, there seems almost no regularity result beyond the energy framework for possibly sign-changing solutions. Moreover, the extinction time $t_* = t_*(u_0)$ is uniquely determined for each initial datum $u_0$. Estimates \eqref{ext_time} and that with the $H^1_0(\Omega)$-norm replaced by the $L^q(\Omega)$-norm can be proved (see, e.g.,~\cite{AK13}).

We next recall the notions of stability and instability for asymptotic profiles introduced in~\cite{AK13}. Here we emphasize again that the set $\mathcal X$ is used as the phase set for the dynamical system generated by the Cauchy-Dirichlet problem \eqref{eq:1.6}--\eqref{eq:1.8}.

\begin{definition}[Stability and instability of asymptotic profiles (cf.~\cite{AK13})]
\label{D:stbl}
{\ }Let $\phi$ be an asymptotic profile of an energy solution to \eqref{eq:1.1}--\eqref{eq:1.3} {\rm (}equivalently, a nontrivial solution to \eqref{eq:1.10}, \eqref{eq:1.11}{\rm )}.
\begin{enumerate}
\item $\phi$ is said to be \emph{stable}, if
      for any $\varepsilon>0$ there exists $\delta > 0$ such
      that any energy solution $v$ of \eqref{eq:1.6}, \eqref{eq:1.7} satisfies
	       \begin{equation*}
		\sup_{s \in [0, \infty)}
		 \|v(s) -\phi\|_{H^1_0(\Omega)} < \varepsilon,
	       \end{equation*}
	     whenever $v(0) \in \mathcal X$ and $\|v(0) - \phi\|_{H^1_0(\Omega)}<\delta$.
\item $\phi$ is said to be \emph{unstable}, if $\phi$ is not stable.
\item $\phi$ is said to be \emph{asymptotically stable}, if $\phi$ is stable,
      and moreover, there exists $\delta_0 > 0$
      such that any energy solution $v$ of \eqref{eq:1.6}, \eqref{eq:1.7}
      satisfies
      \begin{equation*}
       \lim_{s \nearrow \infty}\|v(s) - \phi\|_{H^1_0(\Omega)} = 0,
      \end{equation*}
      whenever $v(0) \in \mathcal X$ and $\|v(0) - \phi\|_{H^1_0(\Omega)}<\delta_0$.
\item[(iv)] $\phi$ is said to be \emph{exponentially stable}, if $\phi$ is stable, and moreover, there exist constants $C, \mu, \delta_1 > 0$ such that any energy solution $v$ of \eqref{eq:1.6}, \eqref{eq:1.7} satisfies
$$
\|v(s)-\phi\|_{H^1_0(\Omega)} \leq C \e^{-\mu s} \quad \mbox{ for all } \ s \geq 0,
$$
provided that $v(0) \in \mathcal X$ and $\|v(0)-\phi\|_{H^1_0(\Omega)} < \delta_1$.
\end{enumerate}
\end{definition}

Finally, let us briefly summarize a couple of stability results obtained in~\cite[Theorems 2 and 3]{AK13}:
\begin{enumerate}
 \item Every \emph{least-energy} solution to \eqref{eq:1.10}, \eqref{eq:1.11} is \emph{asymptotically stable} in the sense of Definition \ref{D:stbl}, provided that it is isolated in $H^1_0(\Omega)$ from all the other nontrivial solutions.
 \item Every \emph{sign-changing} solution to \eqref{eq:1.10}, \eqref{eq:1.11} is \emph{not} asymptotically stable in the sense of Definition \ref{D:stbl}. In addition, if it is isolated in $H^1_0(\Omega)$ from all the other nontrivial solutions, then it is \emph{unstable}.
\end{enumerate}
We also refer the interested reader to~\cite{INdAM13,AK14,A16}.

\section{Proofs of Theorem \ref{T:sc-conv} and Corollary \ref{C:stbl}}\label{S:sc-conv}

Let $v = v(x,s)$ be a (possibly sign-changing) energy solution to \eqref{eq:1.6}--\eqref{eq:1.8} such that $v(s_n) \to \phi$ strongly in $H^1_0(\Omega)$ for some sequence $s_n \to +\infty$ and \emph{nondegenerate} nontrivial solution $\phi = \phi(x)$ to \eqref{eq:1.10}, \eqref{eq:1.11}. Then we can verify that
\begin{equation}\label{H10-conv}
v(s) \to \phi \quad \mbox{ strongly in } H^1_0(\Omega) \ \mbox{ as } \ s \to +\infty
\end{equation}
(see~\cite[\S 2]{A21} for a proof). \prf{Indeed, since $\phi$ is nondegenerate, it is isolated in $H^1_0(\Omega)$, that is, there exists $r > 0$ such that the ball $B_{H^1_0(\Omega)}(\phi;r) = \{w \in H^1_0(\Omega) \colon \|w - \phi\|_{H^1_0(\Omega)} < r\}$ does not involve any solutions to \eqref{eq:1.10}, \eqref{eq:1.11} except for $\phi$. Now, suppose to the contrary that there exist a sequence $\sigma_n \to +\infty$ and a constant $r_0 > 0$ such that $\|v(\sigma_n) - \phi\|_{H^1_0(\Omega)} > r_0$ for any $n \in \N$. Then due to~\cite[Theorem 1]{AK13}, up to a (not relabeled) subsequence, $v(\sigma_n) \to \psi$ strongly in $H^1_0(\Omega)$ for another (nontrivial) solution $\psi$ to \eqref{eq:1.10}, \eqref{eq:1.11}. Then since $\|\phi - \psi\|_{H^1_0(\Omega)} \geq r$, one can take a sequence $\tilde{s}_n \to +\infty$ such that $\|v(\tilde{s}_n) - \phi\|_{H^1_0(\Omega)} = r/2$. Indeed, one can assume that $\|v(s_n) - \phi\|_{H^1_0(\Omega)} < r/2$ for $n \in \N$ large enough. Set $\tilde{s}_n := \inf\{s \geq s_n \colon \|v(s) - \phi\|_{H^1_0(\Omega)} \geq r/2\}$, which is finite, since $v(\sigma_n) \to \psi$ strongly in $H^1_0(\Omega)$ and $\|\phi-\psi\|_{H^1_0(\Omega)} \geq r$. Then there exists $\hat{s}_m \geq s_n$ such that $\|v(\hat{s}_m) - \phi\|_{H^1_0(\Omega)} \geq r/2$ and $\hat{s}_m \searrow \tilde{s}_n$ as $m \to +\infty$. Since $s \mapsto v(s)$ is right-continuous with values in $H^1_0(\Omega)$ on $[0,+\infty)$, we find that $\|v(\tilde{s}_n)-\phi\|_{H^1_0(\Omega)} \geq r/2$. On the other hand, for $s \in (s_n,\tilde{s}_n)$, it holds that $\|v(s)-\phi\|_{H^1_0(\Omega)} < r/2$ from the definition of $\tilde{s}_n$. By virtue of the weak continuity of $s \mapsto v(s)$ in $H^1_0(\Omega)$, we find that $\|v(\tilde{s}_n)-\phi\|_{H^1_0(\Omega)} \leq \liminf_{s \nearrow \tilde{s}_n}\|v(s)-\phi\|_{H^1_0(\Omega)} \leq r/2$. Thus $\|v(\tilde{s}_n)-\phi\|_{H^1_0(\Omega)} = r/2$. However, one can take a (not relabeled) subsequence of $(\tilde{s}_n)$ such that $v(\tilde{s}_n) \to \tilde{\phi}$ strongly in $H^1_0(\Omega)$ for some nontrivial solution $\tilde\phi$ to \eqref{eq:1.10}, \eqref{eq:1.11} and $\|\tilde\phi-\phi\|_{H^1_0(\Omega)} = r/2$. It is a contradiction. Thus \eqref{H10-conv} follows.}

Before proceeding to a proof, we briefly give an idea of proof in view of comparison with~\cite{A21}, where a proof starts with the energy inequality,
$$
\frac{4(q-1)}{q^2} \left\|\partial_s (|v|^{(q-2)/2}v)(s)\right\|_{L^2(\Omega)}^2 \leq - \dfrac{\d}{\d s} J(v(s)).
$$
Observing the fundamental relation,
$$
\partial_s (|v|^{q-2}v)(s) = \frac{2(q-1)}q |v(s)|^{(q-2)/2} \partial_s (|v|^{(q-2)/2}v)(s),
$$
we can rewrite the left-hand side of the energy inequality as follows:
\begin{align*}
\MoveEqLeft{
\frac{4(q-1)}{q^2} \left\|\partial_s (|v|^{(q-2)/2}v)(s)\right\|_{L^2(\Omega)}^2
}\\
&= \frac 1 {q-1} \int_\Omega |\partial_s (|v|^{q-2}v)(x,s)|^2 |v(x,s)|^{2-q} \, \d x.
\end{align*}
In~\cite{A21}, for the case where $\phi$ is a positive solution to \eqref{eq:1.10}, \eqref{eq:1.11} in $\Omega$, in order to control the singularity arising from $|v(x,s)|^{2-q}$ (on the boundary; indeed, $v = 0$ on $\partial \Omega$ and $2-q < 0$), we substitute the profile $\phi(x) > 0$ in a proper way and rewrite the energy inequality as follows:
\begin{align*}
\MoveEqLeft{
\frac 1 {q-1} \left\|\frac{v(s)}{\phi}\right\|_{L^\infty(\Omega)}^{2-q} \int_\Omega |\partial_s (|v|^{q-2}v)(x,s)|^2 \phi(x)^{2-q} \, \d x
}\\
&\leq -\frac{\d}{\d s} J(v(s)).
\end{align*}
Then the ratio $\frac{v(s)}{\phi}$ is known to converge to 1 uniformly on $\Omega$ by~\cite{BGV12} (see also~\cite{BF21} for quantitative convergence), and moreover, $\partial_s (|v|^{q-2}v)(s)$ coincides with $-J'(v(s))$ by equation. As in~\cite{A21}, developing a quantitative gradient inequality, i.e., a relation between the energy gap $J(w)-J(\phi)$ and the weighted $L^2$-norm $\|J'(w)\|_{L^2(\Omega;\phi^{2-q}\, \d x)}$ of the gradient $J'(w)$, we can eventually obtain \eqref{h10-conv}. On the other hand, in the present paper, we shall directly handle the integral
$$
\int_\Omega |\partial_s (|v|^{q-2}v)(x,s)|^2 |v(x,s)|^{2-q} \, \d x
$$
as a weighted $L^2$-norm with the \emph{dynamic} weight function $|v(x,s)|^{2-q}$, which has singularity on the zero set of $v(\cdot,s)$ and may vary in time, and develop a quantitative gradient inequality for such time-dependent weighted $L^2$-norms of the gradients. To this end, we shall first carefully set up appropriate function spaces in the next subsection. A modified energy inequality will then be given in \S \ref{Ss:EI}. Next, we shall consider the eigenvalue problem for some linearized operator $\Ls$ at $v(s)$ associated with the evolutionary problem \eqref{eq:1.6}, \eqref{eq:1.7} in \S \ref{Ss:As}, and then, quantitative convergence as $s \to \infty$ of eigenvalues $\mu_j^s$ for $\Ls$ will also be discussed in \S \ref{Ss:ev-conv}. Furthermore, a quantitative gradient inequality will be developed under such a spectral framework in \S \ref{Ss:GI} based on some preparatory steps \S \ref{Ss:dual-decomp} and \S \ref{Ss:Taylor}. Finally, Theorem \ref{T:sc-conv} and Corollary \ref{C:stbl} will be proved at the end of \S \ref{Ss:SRC}.

On the other hand, an alternative argument will also be exhibited in Section \ref{S:alt}, where an ``$\vep$-approximation'' of the time-dependent $L^2$-norm will be introduced.

\subsection{$L^2$-spaces with possibly degenerate weights}\label{Ss:wL2}

In this subsection, we shall introduce $L^2$ spaces with possibly degenerate weights and their associate spaces. They will play a fundamental role in what follows. Moreover, we shall also discuss embeddings associated with these spaces. 

Let $s \geq 0$ be fixed and define the set of zeros of $v(\cdot,s)$ as
$$
Z(s) := \left\{ x \in \Omega \colon v(x,s) = 0\right\}.
$$
We set
$$
\H_s := \left\{ w \in \mathcal{M}(\Omega\setminus Z(s)) \colon |v(s)|^{q-2}w^2 \in L^1(\Omega\setminus Z(s))\right\},
$$
where $\mathcal{M}(\Omega \setminus Z(s))$ stands for the set of Lebesgue measurable functions defined on $\Omega \setminus Z(s)$, endowed with the inner product
$$
(f,g)_{\H_s} := \int_{\Omega\setminus Z(s)} f(x) g(x) |v(x,s)|^{q-2} \, \d x \quad \mbox{ for } \ f,g \in \H_s.
$$
Then $\H_s$ is a Hilbert space, whose norm is given by
$$
\|f\|_{\H_s}^2 = \int_{\Omega \setminus Z(s)} |f(x)|^2 \, |v(x,s)|^{q-2} \, \d x \quad \mbox{ for } \ f \in \H_s.
$$
Indeed, let $(f_n)$ be a Cauchy sequence in $\H_s$, i.e., $\|f_m - f_n\|_{\H_s} \to 0$ as $m,n \to +\infty$. Then $(f_n|v(s)|^{(q-2)/2})$ forms a Cauchy sequence in $L^2(\Omega\setminus Z(s))$. Hence it converges to a limit $h$ strongly in $L^2(\Omega\setminus Z(s))$. Set $f := h |v(s)|^{(2-q)/2}$. Then $f$ belongs to $\H_s$ and $f_n \to f$ strongly in $\H_s$. Hence $(\H_s,\|\cdot\|_{\H_s})$ is complete.

\begin{proposition}[Associate space of $\H_s$]\label{P:assp}
For each $s \geq 0$, the associate space $\H_s'$ of $\H_s$ is characterized as a Hilbert space,
\begin{equation}\label{assp}
\H_s' = \left\{ w \in \mathcal{M}(\Omega\setminus Z(s)) \colon 
|v(s)|^{2-q}w^2 \in L^1(\Omega\setminus Z(s)) \right\}
\end{equation}
equipped with the inner product
\begin{equation}\label{ip-H'}
(f,g)_{\H_s'} := \int_{\Omega \setminus Z(s)} f(x) g(x) |v(x,s)|^{2-q} \, \d x \quad \mbox{ for } \ f,g \in \H_s'.
\end{equation}
\end{proposition}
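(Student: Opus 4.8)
The plan is to identify $\H_s$ with an unweighted $L^2$-space and transport to it the self-duality of $L^2$. The point is that $v(\cdot,s)$ is real-valued and, by the very definition of $Z(s)$, vanishes nowhere on $\Omega\setminus Z(s)$, so the weight $|v(\cdot,s)|^{q-2}$ is strictly positive and finite a.e.\ there; consequently the multiplication map $M_s f := |v(\cdot,s)|^{(q-2)/2}f$ is a linear isometric isomorphism of $\H_s$ onto $L^2(\Omega\setminus Z(s))$, with inverse given by multiplication by $|v(\cdot,s)|^{(2-q)/2}$ (this is nothing but the computation already used above to establish the completeness of $\H_s$). Likewise, writing $\mathcal K_s$ for the set on the right-hand side of \eqref{assp}, the map $N_s g := |v(\cdot,s)|^{(2-q)/2}g$ is an isometric isomorphism of $\mathcal K_s$ onto $L^2(\Omega\setminus Z(s))$, so $\mathcal K_s$ is complete and is a Hilbert space with inner product \eqref{ip-H'}. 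It thus remains only to check that the associate space $\H_s'$ coincides with $\mathcal K_s$ as a normed space.

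For the inclusion $\mathcal K_s\subseteq\H_s'$ together with the bound $\|g\|_{\H_s'}\le\|g\|_{\mathcal K_s}$, I would write the Lebesgue pairing as
$$
\int_{\Omega\setminus Z(s)} |f(x)g(x)|\,\d x = \int_{\Omega\setminus Z(s)} \bigl(|f(x)|\,|v(x,s)|^{(q-2)/2}\bigr)\bigl(|g(x)|\,|v(x,s)|^{(2-q)/2}\bigr)\,\d x
$$
and apply the Cauchy--Schwarz inequality, which bounds the right-hand side by $\|f\|_{\H_s}\|g\|_{\mathcal K_s}$. For the converse, given $g\in\H_s'$, I would test the defining inequality $\bigl|\int_{\Omega\setminus Z(s)} fg\,\d x\bigr|\le\|g\|_{\H_s'}\|f\|_{\H_s}$ against the natural Cauchy--Schwarz optimizers, truncated so as to lie in $\H_s$: put $B_n := \{x\in\Omega\setminus Z(s) : 1/n<|v(x,s)|<n,\ |g(x)|\le n\}$, which has finite measure and satisfies $B_n\nearrow\Omega\setminus Z(s)$ up to a null set (since $v(\cdot,s)$ and $g$ are a.e.\ finite and $v(\cdot,s)\neq 0$ off $Z(s)$), and $f_n := (\sgn g)\,|g|\,|v(s)|^{2-q}\mathbf 1_{B_n}$. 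A short computation gives $f_n\in\H_s$ with $\|f_n\|_{\H_s}^2 = I_n$ and $\int f_n g\,\d x = I_n$, where $I_n := \int_{B_n}|g|^2|v(s)|^{2-q}\,\d x$; the defining inequality then forces $\sqrt{I_n}\le\|g\|_{\H_s'}$ for every $n$, and letting $n\to\infty$ by monotone convergence yields $g\in\mathcal K_s$ with $\|g\|_{\mathcal K_s}\le\|g\|_{\H_s'}$. Combining the two inclusions gives $\H_s'=\mathcal K_s$ with identical norms, which establishes the proposition.

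I do not expect any real obstacle: the statement is just the standard identification of the Köthe dual of a weighted space $L^2(w\,\d x)$ with $L^2(w^{-1}\,\d x)$, here with $w=|v(s)|^{q-2}$. The two places deserving a little attention are (i) the removal of the zero set $Z(s)$, which is exactly what makes both weights $|v(s)|^{q-2}$ and $|v(s)|^{2-q}$ a.e.\ positive and finite, so that $M_s$ and $N_s$ are genuine isometric isomorphisms; and (ii) the truncation to $B_n$ in the lower bound, which cannot be dispensed with because the unrestricted optimizer $(\sgn g)\,|g|\,|v(s)|^{2-q}$ of the Cauchy--Schwarz step need not itself belong to $\H_s$.
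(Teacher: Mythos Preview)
Your proof is correct and follows essentially the same approach as the paper: both identify $\H_s$ isometrically with $L^2(\Omega\setminus Z(s))$ via multiplication by $|v(s)|^{(q-2)/2}$ and then transport the self-duality of $L^2$. The only difference is cosmetic: the paper carries out the lower bound in one line by substituting $h=g|v(s)|^{(q-2)/2}$ directly into the associate-norm supremum and invoking $\|f|v(s)|^{(2-q)/2}\|_{L^2}=\sup_{\|h\|_{L^2}\le 1}\int |f||h||v(s)|^{(2-q)/2}\,\d x$, whereas you spell this out via the truncation $B_n$ and monotone convergence.
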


\begin{proof}
Let $s \geq 0$ be fixed. The associate space $\H_s'$ of $\H_s$ is defined by
$$
\H_s' := \left\{ f \in \mathcal{M}(\Omega\setminus Z(s)) \colon \|f\|_{\H_s'} < +\infty \right\}
$$
equipped with the norm
$$
\|f\|_{\H_s'} := \sup_{\substack{g \in \H_s\\\|g\|_{\H_s} \leq 1}} \int_{\Omega \setminus Z(s)} |f(x)||g(x)| \, \d x \quad \mbox{ for } \ f \in \mathcal{M}(\Omega \setminus Z(s)).
$$
Let $f \in \H_s'$ be fixed. Then we observe by definition that
\begin{align*}
\|f\|_{\H_s'}
&= \sup_{\substack{h \in L^2(\Omega\setminus Z(s))\\
\|h\|_{L^2(\Omega\setminus Z(s))} \leq 1}} \int_{\Omega\setminus Z(s)} |f(x)||h(x)||v(x,s)|^{(2-q)/2} \, \d x\\
&= \left\| f |v(s)|^{(2-q)/2}\right\|_{L^2(\Omega\setminus Z(s))}
\end{align*}
(here we set $h = g |v(s)|^{(q-2)/2} \in L^2(\Omega \setminus Z(s))$). Hence $f |v(s)|^{(2-q)/2}$ lies on $L^2(\Omega\setminus Z(s))$. The inverse also follows immediately as above. Furthermore, one can easily check that $(\cdot,\cdot)_{\H_s'}$ defined by \eqref{ip-H'} turns out to be the inner product which induces the norm $\|\cdot\|_{\H_s'}$, that is, $(f,f)_{\H_s'} = \|f\|_{\H_s'}^2$ for $f \in \H_s'$. Finally, the completeness of $(\H_s',\|\cdot\|_{\H_s'})$ can be checked similarly to $\H_s$.
\end{proof}

Due to the difference of domains, even if $u$ is the zero element of $\H_s$ or $\H_s'$ (i.e., $u = 0$ in $\H_s$ or in $\H_s'$), we cannot always assure that $u = 0$ a.e.~in $\Omega$ (but it is still true that $u = 0$ a.e.~in $\Omega \setminus Z(s)$).

\begin{proposition}\label{P:embed}
There exists a constant $C_* \geq 0$ depending on the supremum 
\begin{equation}\label{vLq}
c(v) := \sup_{s \geq 0} \|v(s)\|_{L^q(\Omega)} < +\infty
\end{equation}
such that
$$
\|w|_{\Omega\setminus Z(s)}\|_{\H_s} \leq C_* \|w\|_{L^q(\Omega)} \quad \mbox{ for } \ w \in L^q(\Omega) \ \mbox{ and } \ s \geq 0,
$$
where $w|_{\Omega \setminus Z(s)}$ stands for the restriction of $w$ onto $\Omega \setminus Z(s)$ and will be denoted simply by $w$ when no confusion can arise. Moreover, let $f \in \H_s'$ and denote by $\bar f$ the zero extension of $f$ onto $\Omega$. Then $\bar f$ belongs to $L^{q'}(\Omega)$, and moreover, it holds that
$$
\|\bar f\|_{L^{q'}(\Omega)} \leq C_* \|f\|_{\H_s'} \quad \mbox{ for } \ f \in \H_s' \ \mbox{ and } \ s \geq 0.
$$
\end{proposition}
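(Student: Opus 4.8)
The plan is to obtain both bounds from a single application of Hölder's inequality on $\Omega\setminus Z(s)$, with the explicit constant $C_* := c(v)^{(q-2)/2}$. First I would note that $c(v)$ in \eqref{vLq} is indeed finite: by \eqref{v-bdd} the $H^1_0(\Omega)$-norm of $v(s)$ is bounded uniformly in $s$, and since $q < 2^*$ the Sobolev embedding $H^1_0(\Omega)\hookrightarrow L^q(\Omega)$ then controls $\|v(s)\|_{L^q(\Omega)}$ uniformly in $s$. This uniform-in-$s$ bound is exactly what allows $C_*$ to be taken independent of $s$.

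For the first inequality, given $w\in L^q(\Omega)$ I would write, pointwise on $\Omega\setminus Z(s)$, the identity $|w|^2|v(s)|^{q-2}=(|w|^q)^{2/q}(|v(s)|^q)^{(q-2)/q}$ and apply Hölder's inequality with the conjugate exponents $q/2$ and $q/(q-2)$ (both larger than $1$ since $q>2$). This gives
$$
\|w|_{\Omega\setminus Z(s)}\|_{\H_s}^2=\int_{\Omega\setminus Z(s)}|w|^2|v(s)|^{q-2}\,\d x\le \|w\|_{L^q(\Omega)}^2\,\|v(s)\|_{L^q(\Omega)}^{q-2}\le C_*^2\,\|w\|_{L^q(\Omega)}^2,
$$
which in particular shows $w|_{\Omega\setminus Z(s)}\in\H_s$ and yields the claimed estimate.

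For the second inequality, let $f\in\H_s'$; since the zero extension $\bar f$ vanishes on $Z(s)$ we have $\int_\Omega|\bar f|^{q'}\,\d x=\int_{\Omega\setminus Z(s)}|f|^{q'}\,\d x$. With $a:=q'/2$ and $b:=(q-2)q'/(2q)$ one checks directly that $a+b=1$, $(2-q)a+qb=0$ and $a,b\in(0,1)$, so on $\Omega\setminus Z(s)$ one may factor $|f|^{q'}=(|f|^2|v(s)|^{2-q})^{a}(|v(s)|^q)^{b}$. Hölder's inequality with the conjugate exponents $1/a$ and $1/b$ then gives
$$
\int_{\Omega\setminus Z(s)}|f|^{q'}\,\d x\le\|f\|_{\H_s'}^{2a}\,\|v(s)\|_{L^q(\Omega)}^{qb}=\|f\|_{\H_s'}^{q'}\,\|v(s)\|_{L^q(\Omega)}^{(q-2)q'/2},
$$
whence $\|\bar f\|_{L^{q'}(\Omega)}\le\|v(s)\|_{L^q(\Omega)}^{(q-2)/2}\|f\|_{\H_s'}\le C_*\,\|f\|_{\H_s'}$, so in particular $\bar f\in L^{q'}(\Omega)$.

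The argument is elementary throughout; the only substantive ingredient is the already-established uniform bound $c(v)<\infty$, and the single point deserving a bit of care is the bookkeeping of the Hölder exponents in the second part (verifying $a+b=1$ and $(2-q)a+qb=0$, so that the weights $|v(s)|^{2-q}$ and $|v(s)|^{q}$ combine to the right power). I do not expect any genuine obstacle.
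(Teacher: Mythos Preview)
Your proof is correct. The first inequality is handled identically to the paper, including the explicit constant $C_*=c(v)^{(q-2)/2}$.

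For the second inequality the paper takes a slightly different route: instead of factoring $|f|^{q'}$ and applying H\"older directly, it argues by duality. Namely, for any $\varphi\in L^q(\Omega)$ it writes
\[
\Big|\int_\Omega \bar f\,\varphi\,\d x\Big|\le\int_{\Omega\setminus Z(s)}|f\varphi|\,\d x\le\|f\|_{\H_s'}\|\varphi\|_{\H_s}\le C_*\|f\|_{\H_s'}\|\varphi\|_{L^q(\Omega)},
\]
using the $\H_s$--$\H_s'$ pairing together with the already-proved first inequality, and then concludes $\bar f\in L^{q'}(\Omega)$ with the same bound via the Riesz representation theorem. This makes transparent that the second estimate is exactly the dual of the first (so the same constant $C_*$ arises automatically), at the cost of invoking duality. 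Your approach is more self-contained and computational; the only price is the exponent bookkeeping you flagged, which you carried out correctly.
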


\begin{proof}
For each $s \geq 0$, we observe that
$$
\int_{\Omega \setminus Z(s)} |w|^2 |v(s)|^{q-2} \, \d x \leq \|w\|_{L^q(\Omega)}^2 \|v(s)\|_{L^q(\Omega)}^{q-2} \quad \mbox{ for } \ w \in L^q(\Omega).
$$
Since $v(s)$ is bounded in $L^q(\Omega)$ for $s \geq 0$ (see \eqref{v-bdd}), there exists a constant $C_* \geq 0$ depending on \eqref{vLq} (e.g., one can take $C_* = c(v)^{(q-2)/2}$) such that
$$
\|w|_{\Omega \setminus Z(s)}\|_{\H_s} \leq C_* \|w\|_{L^q(\Omega)} \quad \mbox{ for } \ w \in L^q(\Omega) \ \mbox{ and } \ s \geq 0.
$$
Next, let $f \in \H_s'$ and define $\bar f : \Omega \to \R$ by $\bar f(x) = f(x)$ if $x \in \Omega \setminus Z(s)$; $\bar f(x) = 0$ if $x \in Z(s)$. It then follows that
\begin{align*}
\left| \int_\Omega \bar f \varphi \, \d x \right|
\leq \int_{\Omega \setminus Z(s)} |f \varphi| \, \d x 
&\leq \|f\|_{\H_s'} \|\varphi|_{\Omega \setminus Z(s)}\|_{\H_s}\\
&\leq C_* \|f\|_{\H_s'} \|\varphi\|_{L^q(\Omega)} \quad \mbox{ for } \ \varphi \in L^q(\Omega),
\end{align*}
which along with the Riesz representation theorem implies that $\bar f \in L^{q'}(\Omega)$, and hence, we obtain $\|\bar f\|_{L^{q'}(\Omega)} \leq C_* \|f\|_{\H_s'}$ for $f \in \H_s'$ and $s \geq 0$.
\end{proof}

\subsection{A modified energy inequality}\label{Ss:EI}

We next derive some energy inequality for \eqref{eq:1.6}--\eqref{eq:1.8} by employing the family of Hilbert spaces introduced in the former section. Define a functional $J : H^1_0(\Omega) \to \R$ by
$$
J(w) = \frac 12\|\nabla w\|_{L^2(\Omega)}^2 - \frac{\lambda_q}{q} \|w\|_{L^q(\Omega)}^q \ \mbox{ for } \ w \in H^1_0(\Omega).
$$
Then one has $\partial_s (|v|^{(q-2)/2})v \in L^2(0,\infty;L^2(\Omega))$ and
$$
\frac{4(q-1)}{q^2} \int^{s_2}_{s_1} \left\|\partial_s (|v|^{(q-2)/2}v)(\sigma)\right\|_{L^2(\Omega)}^2 \, \d \sigma + J(v(s_2)) \leq J(v(s_1))
$$
for $0 \leq s_1 < s_2 < +\infty$; whence it follows that the function $s \mapsto J(v(s))$ is nonincreasing and hence differentiable a.e.~in $(0,+\infty)$. It also follows that
\begin{equation}\label{eneq}
 \frac{4(q-1)}{q^2} \left\|\partial_s (|v|^{(q-2)/2}v)(s)\right\|_{L^2(\Omega)}^2 \leq - \dfrac{\d}{\d s} J(v(s))
\end{equation}
for a.e.~$s > 0$.

Noting that
\begin{equation}\label{time-deri-decomp}
\partial_s (|v|^{q-2}v)(s) = \frac{2(q-1)}q |v(s)|^{(q-2)/2} \partial_s (|v|^{(q-2)/2}v)(s),
\end{equation}
which also implies that $\partial_s (|v|^{q-2}v) \in L^2(0,\infty;L^{q'}(\Omega))$, and recalling that $\partial_s (|v|^{(q-2)/2}v)(s) \in L^2(\Omega)$, we can deduce that $\partial_s (|v|^{q-2}v)(s) \in \H_s'$ for a.e.~$s > 0$. For $w \in \H_s$, we observe that
\begin{align*}
\MoveEqLeft
 \int_{\Omega \setminus Z(s)} \left| \partial_s (|v|^{q-2}v)(s) \right| |w|  \, \d x
\\
&= \frac{2(q-1)}{q} \int_{\Omega \setminus Z(s)} |v(s)|^{(q-2)/2} \left| \partial_s (|v|^{(q-2)/2}v)(s) \right| |w| \, \d x\\
&\leq \frac{2(q-1)}{q} \left\|\partial_s (|v|^{(q-2)/2}v)(s)\right\|_{L^2(\Omega)} \left\|w\right\|_{\H_s},
\end{align*}
which implies
\begin{equation}\label{ds-ineq}
 \left\|\partial_s (|v|^{q-2}v)(s)\right\|_{\H_s'}
  \leq \frac{2(q-1)}{q} \left\|\partial_s (|v|^{(q-2)/2}v)(s)\right\|_{L^2(\Omega)} 
\end{equation}
for a.e.~$s > 0$. Recalling the energy inequality \eqref{eneq}, we obtain
\begin{equation}\label{eneq2}
\frac 1{q-1} \left\| \partial_s (|v|^{q-2}v)(s) \right\|_{\H_s'}^2
\leq - \dfrac{\d}{\d s} J(v(s)).
\end{equation}

\subsection{Eigenvalue problems with possibly degenerate weights}\label{Ss:As}

Throughout this subsection, let $s \geq 0$ be fixed arbitrarily. Define the operator $A^s : H^1_0(\Omega) \to H^1_0(\Omega)$ by
$$
A^s(w) := (-\Delta)^{-1} \left( |v(s)|^{q-2} w \right) \quad \mbox{ for } \ w \in H^1_0(\Omega).
$$
The following argument in this subsection is still valid for any function $\omega \in H^1_0(\Omega) \setminus \{0\}$ instead of $v(s)$ in the weight (e.g., $\omega = \phi$). Then $\mathcal{H}_s$ is also replaced in an analogous way, and then, it does no longer depend on $s$. This subsection is devoted to discussing eigenvalue problems for the operator $A^s$. We shall finally construct a complete orthonormal system of $H^1_0(\Omega)$ by means of eigenfunctions for $A^s$.

We first prove that $A^s$ is self-adjoint and compact. Indeed, let $(w_n)$ be bounded in $H^1_0(\Omega)$ and set $u_n := A^s(w_n)$. Then $- \Delta u_n = |v(s)|^{q-2}w_n$ in $H^{-1}(\Omega)$. Testing both sides by $u_n$, we see that
\begin{align*}
\|\nabla u_n\|_{L^2(\Omega)}^2 &= \int_\Omega |v(s)|^{q-2} w_n u_n \, \d x\\
&\leq C_q^2 \|v(s)\|_{L^q(\Omega)}^{q-2} \|w_n\|_{H^1_0(\Omega)} \|\nabla u_n\|_{L^2(\Omega)},
\end{align*}
which implies that $(u_n)$ is bounded in $H^1_0(\Omega)$. Hence we have, up to a (not relabeled) subsequence, $u_n \to u$ and $w_n \to w$ weakly in $H^1_0(\Omega)$ and strongly in $L^q(\Omega)$ (by $q < 2^*$) for some $u,w \in H^1_0(\Omega)$. Therefore one can verify that $-\Delta u = |v(s)|^{q-2} w$ in $H^{-1}(\Omega)$, and moreover, it follows that
\begin{align*}
 \|\nabla u_n\|_{L^2(\Omega)}^2
 &= \int_\Omega |v(s)|^{q-2} w_n u_n \, \d x \\
 &\to \int_\Omega |v(s)|^{q-2} w u \, \d x = \|\nabla u\|_{L^2(\Omega)}^2,
\end{align*}
which along with the uniform convexity of $(H^1_0(\Omega), \|\nabla \cdot\|_{L^2(\Omega)})$ yields
$$
u_n \to u \quad \mbox{ strongly in } H^1_0(\Omega).
$$
Thus $A^s$ turns out to be a compact operator in $H^1_0(\Omega)$. Moreover, let $f,g \in H^1_0(\Omega)$. It then follows immediately that
\begin{align*}
(A^s f, g)_{H^1_0(\Omega)} &= \langle -\Delta (A^s f), g \rangle_{H^1_0(\Omega)}
= \int_\Omega |v(s)|^{q-2} f g \, \d x\\
& = \langle -\Delta (A^s g), f \rangle_{H^1_0(\Omega)}
= (f, A^s g)_{H^1_0(\Omega)},
\end{align*}
where $(\cdot,\cdot)_{H^1_0(\Omega)}$ and $\langle\cdot,\cdot\rangle_{H^1_0(\Omega)}$ stand for the inner product in $H^1_0(\Omega)$ and the duality pairing between $H^1_0(\Omega)$ and $H^{-1}(\Omega)$, respectively (see Notation). Hence $A^s$ is symmetric, and therefore, self-adjoint.

Due to the spectral theory for compact self-adjoint operators (see, e.g.,~\cite[\S 5]{B-FA}), all eigenvalues of $A^s$ are real and bounded. Moreover, since $A^s \neq 0$, $\sigma(A^s) \setminus \{0\}$ is either finite or a sequence converging to $0$. Here $\sigma(A^s)$ stands for the spectral set of $A^s$. Furthermore, $\sigma(A^s) \setminus \{0\}$ coincides with the set of all nonzero eigenvalues of $A^s$. Moreover, we observe that all eigenvalues of $A^s$ are nonnegative. \prf{Indeed, let $(\lambda, e)$ be an eigenpair of $A^s$ such that $\|e\|_{H^1_0(\Omega)} = 1$. Testing both sides of $A^s e = \lambda e$ by $e$, we have
$$
\langle -\Delta (A^s e), e \rangle_{H^1_0(\Omega)} = (A^s e, e)_{H^1_0(\Omega)} = \lambda \|e\|_{H^1_0(\Omega)}^2 = \lambda.
$$
We also note that
$$
\langle -\Delta (A^s e), e \rangle_{H^1_0(\Omega)} = \int_\Omega |v(s)|^{q-2} e^2 \, \d x \geq 0;
$$
whence it follows that $\lambda \geq 0$.}

Let $\{\lambda_j^s\}_{j \geq 1}$ be the set of all \emph{nonzero} eigenvalues of $A^s$. We set
\begin{align*}
E_0^s := N(A^s) :=& \ \{w \in H^1_0(\Omega) \colon A^s(w) = 0\}\\
=& \ \{w \in H^1_0(\Omega) \colon |v(s)|^{q-2}w = 0 \ \mbox{ a.e.~in } \Omega\}
\end{align*}
and $E_j^s := N(A^s - \lambda_j^s I)$. Then we find that
$$
0 \leq \dim E_0^s \leq +\infty, \quad 0 < \dim E_j^s < +\infty.
$$
Here the latter follows from the Fredholm alternative. Then $H^1_0(\Omega)$ is the Hilbert sum of $\{E_j^s\}_{j \geq 0}$.

We claim that $\mathrm{codim} \, E_0^s = +\infty$; indeed, since $E^s_0 = N(A^s)$ is closed in $H^1_0(\Omega)$, we find that $\mathrm{codim} \, E^s_0 < +\infty$ if and only if $\dim \, (E^s_0)^\perp < +\infty$ (see, e.g.,~\cite[(b) of Proposition 11.13]{B-FA}). Recall that
$$
(E^s_0)^\perp = \{ f \in H^{-1}(\Omega) \colon \langle f, \varphi \rangle_{H^1_0(\Omega)} = 0 \ \mbox{ for } \ \varphi \in E^s_0 \}.
$$
Since the $N$-dimensional Lebesgue measure $\mathcal{L}^N(\Omega \setminus Z(s))$ of $\Omega \setminus Z(s)$ is positive (otherwise, $v(s) \equiv 0$), one can construct a sequence $\{M_j\}_{j=1}^\infty$ of disjoint Lebesgue measurable sets such that $\mathcal{L}^N(M_j) > 0$ for $j \in \N$ and $\cup_{j=1}^\infty M_j = \Omega \setminus Z(s)$ (indeed, it is possible, e.g., since $r \mapsto \mathcal{L}^N((\Omega \setminus Z(s)) \cap B_r)$ is continuous for a ball $B_r$ in $\R^N$ of radius $r$). Each characteristic function $\chi_{M_j}$ supported over $M_j$ belongs to $H^{-1}(\Omega)$. Moreover,  for $j \in \N$, noting that $M_j \subset \Omega \setminus Z(s)$ and $\varphi = 0$ a.e.~in $\Omega \setminus Z(s)$ for $\varphi \in E^s_0$, we observe that
$$
\langle \chi_{M_j}, \varphi \rangle_{H^1_0(\Omega)} = \int_\Omega \chi_{M_j} \varphi \, \d x = 0 \quad \mbox{ for } \ \varphi \in E^s_0. 
$$
Hence $\chi_{M_j}$ lies on $(E^s_0)^\perp$ for $j \in \N$. Thus we obtain $\dim \,(E^s_0)^\perp = +\infty$.

Hence $\sigma(A^s) \setminus \{0\} = \{\lambda_j^s\}_{j\geq 1}$ turns out to be a sequence converging to $0$. Here and henceforth, we denote by $\{(\lambda^s_j, e^s_j)\}_{j=1}^\infty$ the sequence consisting of all eigenpairs of $A^s$ for nonzero eigenvalues such that $\{\lambda^s_j\}_{j=1}^\infty$ is nonincreasing, $\lambda^s_j \to 0$ as $j \to +\infty$ and $e^s_j$ is an eigenfunction corresponding to the eigenvalue $\lambda^s_j$ and normalized in $H^1_0(\Omega)$ (that is, $\|e^s_j\|_{H^1_0(\Omega)} = 1$ and then $\|e_j^s/\sqrt{\lambda_j^s}\|_{\H_s} = 1$) by rearranging eigenvalues and by repeating the same eigenvalue according to its multiplicity.

Set $\mu^s_j := 1/\lambda^s_j > 0$ ($j \geq 1$). Then for each $j\geq 1$, $(\mu^s_j,e^s_j)$ is an eigenpair of the following eigenvalue problem:
\begin{equation}\label{eps}
 - \Delta e = \mu |v(s)|^{q-2} e \ \mbox{ in } \Omega, \quad e = 0 \ \mbox{ on } \partial \Omega.
\end{equation}
Then, for any $u \in H^1_0(\Omega)$, there exists $u_0^s \in E_0^s$ such that
\begin{equation}\label{u-ext}
u = u_0^s + \sum_{j=1}^\infty \alpha^s_j e^s_j \ \mbox{ in } H^1_0(\Omega), \quad \alpha^s_j := (u,e^s_j)_{H^1_0(\Omega)},
\end{equation}
which implies that
\begin{align}
 \Ls u :=& - \Delta u - \lambda_q (q-1)|v(s)|^{q-2} u \nonumber\\
 =& -\Delta u_0^s + \sum_{j=1}^\infty \alpha^s_j \frac{\mu_j^s - \lambda_q (q-1)}{\mu_j^s} (-\Delta) e^s_j \ \mbox{ in } H^{-1}(\Omega).\label{Lu-ext}
\end{align}

Let us also consider the eigenvalue problem,
\begin{equation}\label{ep}
 - \Delta e = \mu |\phi|^{q-2} e \ \mbox{ in } \Omega, \quad e = 0 \ \mbox{ on } \partial \Omega,
\end{equation}
where $\phi \in H^1_0(\Omega)$ is a (possibly sign-changing) solution to \eqref{eq:1.10}, \eqref{eq:1.11}. Then repeating the argument so far, we can construct eigenpairs $\{(\mu_j,e_j)\}_{j \geq 1}$ of \eqref{ep} for positive eigenvalues. Then $(\nu_j,e_j)$ with $\nu_j := \mu_j - \lambda_q (q-1)$ becomes an eigenpair of the linearized operator 
$$
\Lphi := - \Delta - \lambda_q (q-1) |\phi|^{q-1}
$$
(cf.~see~\cite[\S 2]{BF21},~\cite{A21}). When $\phi$ is nondegenerate, all the eigenvalues $\nu_j$ are nonzero, and therefore, $\Lphi \in \mathscr{L}(H^1_0(\Omega),H^{-1}(\Omega))$ is invertible (i.e., the inverse $\Lphi^{-1} :H^{-1}(\Omega) \to H^1_0(\Omega)$ is well defined and bounded linear). Moreover, $\mathcal{H}_s$ and $\mathcal{H}_s'$ are replaced by $\mathcal{H}_\phi := L^2(\Omega \setminus Z_\phi;|\phi|^{q-2}\, \d x)$ and $\mathcal{H}_\phi' := L^2(\Omega \setminus Z_\phi;|\phi|^{2-q}\, \d x)$ with the set $Z_\phi := \{x \in \Omega \colon \phi(x) = 0\}$.

\subsection{Convergence of eigenvalues}\label{Ss:ev-conv}

In this subsection, we shall discuss convergence of each eigenvalue $\mu^s_j$ for \eqref{eps} as $s \to +\infty$. We first exhibit the following lemma, which may be standard; however, for the completeness, we shall give a proof in Appendix \S \ref{A:ev-var}:

\begin{lemma}[Variational representation of eigenvalues]\label{L:ev-var}
For each $s \geq 0$ and $j \geq 1$, the eigenvalue $\mu^s_j > 0$ of \eqref{eps} can be characterized as the following max-min value\/{\rm :}
\begin{align}
\frac 1 {\mu^s_j} &= \sup_{\substack{Y \subset H^1_0(\Omega)\\[0mm]\dim Y = j}} \inf_{\substack{w \in Y\\[0mm]\|w\|_{H^1_0(\Omega)}=1}} (A^s w,w)_{H^1_0(\Omega)}\nonumber\\
&= \sup_{\substack{Y \subset H^1_0(\Omega)\\[0mm]\dim Y = j}} \inf_{\substack{w \in Y\\[0mm]\|w\|_{H^1_0(\Omega)}=1}} \int_\Omega |v(s)|^{q-2} w^2 \, \d x.\label{ev-var-s}
\end{align}
Here $Y$ denotes a subspace of $H^1_0(\Omega)$. Similarly, each eigenvalue $\mu_j > 0$ of \eqref{ep} can be written as
\begin{equation}\label{ev-var}
\frac 1 {\mu_j} = \sup_{\substack{Y \subset H^1_0(\Omega)\\[0mm]\dim Y = j}} \inf_{\substack{w \in Y\\[0mm]\|w\|_{H^1_0(\Omega)}=1}} \int_\Omega |\phi|^{q-2} w^2 \, \d x.
\end{equation} 
\end{lemma}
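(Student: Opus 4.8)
The plan is to prove \eqref{ev-var-s} (the argument for \eqref{ev-var} being \emph{verbatim}, with $\phi$ in place of $v(s)$ throughout) as the Courant--Fischer max-min principle for the compact, self-adjoint, nonnegative operator $A^s$ constructed in \S\ref{Ss:As}. Since $\mu^s_j = 1/\lambda^s_j$ and, by the computation recorded in \S\ref{Ss:As}, $(A^s w, w)_{H^1_0(\Omega)} = \langle -\Delta(A^s w), w\rangle_{H^1_0(\Omega)} = \int_\Omega |v(s)|^{q-2} w^2 \, \d x$, the two suprema on the right-hand side of \eqref{ev-var-s} coincide, so it is enough to show
\[
\lambda^s_j = \sup_{\substack{Y\subset H^1_0(\Omega)\\ \dim Y = j}}\ \inf_{\substack{w\in Y\\ \|w\|_{H^1_0(\Omega)}=1}} (A^s w, w)_{H^1_0(\Omega)}.
\]
I would first record that the eigenfunctions $\{e^s_i\}_{i\ge1}$ may be taken \emph{orthonormal} in $H^1_0(\Omega)$: the eigenspaces $E^s_i$ of the self-adjoint operator $A^s$ are mutually $H^1_0(\Omega)$-orthogonal, and one picks an orthonormal basis inside each finite-dimensional $E^s_i$; together with $E^s_0 = N(A^s)$ this is precisely the Hilbert sum $H^1_0(\Omega) = E^s_0 \oplus \bigoplus_{i\ge1}E^s_i$ recalled in \S\ref{Ss:As}.

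For ``$\ge$'' in the displayed identity, I would test with $Y := \mathrm{span}\{e^s_1,\dots,e^s_j\}$, which has dimension $j$ since $\lambda^s_1,\dots,\lambda^s_j \ne 0$. A unit vector $w = \sum_{i=1}^j c_i e^s_i \in Y$ satisfies $\sum_{i=1}^j c_i^2 = 1$ and $(A^s w,w)_{H^1_0(\Omega)} = \sum_{i=1}^j \lambda^s_i c_i^2 \ge \lambda^s_j$, using $\lambda^s_1\ge\cdots\ge\lambda^s_j$; hence the infimum over this $Y$ is $\ge\lambda^s_j$, and so is the supremum (which, once ``$\le$'' is established, is thus attained at this $Y$, accounting for the ``max'' in the statement).

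For ``$\le$'', I would fix an arbitrary $Y\subset H^1_0(\Omega)$ with $\dim Y = j$ and set $W := E^s_0 \oplus \overline{\mathrm{span}}\{e^s_i : i\ge j\}$, which is the $H^1_0(\Omega)$-orthogonal complement of $\mathrm{span}\{e^s_1,\dots,e^s_{j-1}\}$ (with $W = H^1_0(\Omega)$ if $j=1$) and hence has codimension $j-1$; then $\dim(Y\cap W) \ge j-(j-1) = 1$, so there is a unit $w\in Y\cap W$. Writing $w = w_0 + \sum_{i\ge j}c_i e^s_i$ with $w_0\in N(A^s)$, one has $\|w_0\|_{H^1_0(\Omega)}^2 + \sum_{i\ge j}c_i^2 = 1$, and since $A^s w_0 = 0$ and the $e^s_i$ are orthonormal and $H^1_0(\Omega)$-orthogonal to $N(A^s)$, $(A^s w,w)_{H^1_0(\Omega)} = \sum_{i\ge j}\lambda^s_i c_i^2 \le \lambda^s_j\sum_{i\ge j}c_i^2 \le \lambda^s_j$. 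Thus $\inf_{w\in Y,\ \|w\|_{H^1_0(\Omega)}=1}(A^s w,w)_{H^1_0(\Omega)}\le\lambda^s_j$ for every such $Y$, whence the supremum is $\le\lambda^s_j$. Combining this with ``$\ge$'' yields \eqref{ev-var-s}, and the identical argument with $\phi$ in place of $v(s)$ yields \eqref{ev-var}.

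The only genuinely delicate point — the ``hard part'' — is the bookkeeping forced by the \emph{infinite-dimensional} kernel $E^s_0 = N(A^s)$: one must ensure that $W$ nonetheless has \emph{finite} codimension $j-1$ (so that the dimension count giving $Y\cap W\ne\{0\}$ is valid) and that the $E^s_0$-component of $w$ drops out of the Rayleigh quotient. Both are immediate consequences of the Hilbert-sum decomposition of $H^1_0(\Omega)$ from \S\ref{Ss:As}; beyond that, the argument is the standard finite-dimensional spectral computation.
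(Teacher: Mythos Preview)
Your proof is correct and follows essentially the same approach as the paper's proof in Appendix~\S\ref{A:ev-var}: both use the eigenfunction expansion \eqref{u-ext}, test with $Y=\mathrm{span}\{e^s_1,\dots,e^s_j\}$ for the lower bound, and for the upper bound exploit the codimension-$(j-1)$ constraint $(w,e^s_i)_{H^1_0(\Omega)}=0$ for $i<j$ to find a suitable unit vector in any $j$-dimensional $Y$. Your presentation is in fact slightly tidier in that it subsumes the case $j=1$ into the general argument (via $W=H^1_0(\Omega)$) rather than treating it separately, and it makes the role of the possibly infinite-dimensional kernel $E^s_0$ more explicit; but the substance is identical.
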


Now, we are ready to prove
\begin{lemma}[Convergence of eigenvalues]\label{L:ev-pert}
There exists a positive constant $C$ which depends only on $q$, $C_q$ and $c(v)$ defined by \eqref{vLq} such that
\begin{equation}\label{ev-pert}
\left|\frac 1 {\mu^s_j} - \frac 1 {\mu_j}\right| \leq C \| v(s) - \phi \|_{L^q(\Omega)}^{\rho} \quad \mbox{ for } \ s \geq 0 \ \mbox{ and } \ j \in \N,
\end{equation}
where $\rho := \min\{q-2,1\} \in (0,1]$. Moreover, for each $j \in \N$, it holds that
\begin{equation}\label{conv-ev}
 \mu^s_j \to \mu_j \quad \mbox{ as } \ s \to +\infty.
\end{equation}
\end{lemma}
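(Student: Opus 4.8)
The estimate \eqref{ev-pert} will follow by inserting the variational characterizations \eqref{ev-var-s} and \eqref{ev-var} into a direct comparison, so the only real work is to control the difference of the two quadratic forms
$$
\left| \int_\Omega |v(s)|^{q-2} w^2 \, \d x - \int_\Omega |\phi|^{q-2} w^2 \, \d x \right|
\leq \int_\Omega \bigl| |v(s)|^{q-2} - |\phi|^{q-2} \bigr| \, w^2 \, \d x
$$
uniformly over $w \in H^1_0(\Omega)$ with $\|w\|_{H^1_0(\Omega)} = 1$. First I would record the elementary pointwise inequality for the map $t \mapsto |t|^{q-2}$: since $q - 2 \in (0, 2^* - 2)$, one has $\bigl| |a|^{q-2} - |b|^{q-2} \bigr| \leq C_q |a - b|^{\rho}$ when $q - 2 \leq 1$ (Hölder continuity of exponent $\rho = q-2$), and $\bigl| |a|^{q-2} - |b|^{q-2} \bigr| \leq C_q (|a| + |b|)^{q-3} |a-b|$ when $q - 2 > 1$, from which (after a further application of Young's inequality to absorb the factor $(|a|+|b|)^{q-3}$ against $w^2$ via Sobolev–Hölder) one again gets an $L^q$-type bound with exponent $\rho = 1$. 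In both regimes this reduces matters to estimating $\int_\Omega |v(s) - \phi|^{\rho} w^2 \, \d x$.

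Next I would apply Hölder's inequality with a carefully chosen triple of exponents so that the three factors $|v(s)-\phi|^\rho$, $w$, $w$ land in $L^{q/\rho}$, $L^q$, $L^q$ respectively; this works precisely because $\rho/q + 1/q + 1/q \leq 1$, which is equivalent to $\rho \leq q - 2$, i.e. exactly the definition of $\rho$. Hence
$$
\int_\Omega |v(s)-\phi|^{\rho} w^2 \, \d x \leq \|v(s)-\phi\|_{L^q(\Omega)}^{\rho} \, \|w\|_{L^q(\Omega)}^2 \, |\Omega|^{\theta}
$$
for the appropriate residual exponent $\theta \geq 0$, and then the Sobolev–Poincaré inequality $\|w\|_{L^q(\Omega)} \leq C_q \|w\|_{H^1_0(\Omega)} = C_q$ (valid since $q < 2^*$) turns the right-hand side into $C \|v(s)-\phi\|_{L^q(\Omega)}^{\rho}$ with $C$ depending only on $q$, $C_q$ and $c(v)$. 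Taking the supremum over $j$-dimensional subspaces $Y$ and then the infimum over normalized $w \in Y$ — and using that $|\sup \inf - \sup \inf| \leq \sup_{Y}\sup_{w}|\cdot|$ of the respective quadratic forms — yields \eqref{ev-pert} for every $j \in \N$ simultaneously, with a constant independent of $j$.

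Finally, \eqref{conv-ev} is immediate: by \eqref{H10-conv} we have $v(s) \to \phi$ strongly in $H^1_0(\Omega)$, hence in $L^q(\Omega)$ by the continuous embedding, so $\|v(s)-\phi\|_{L^q(\Omega)} \to 0$ as $s \to +\infty$; then \eqref{ev-pert} forces $1/\mu^s_j \to 1/\mu_j$, and since $\mu_j > 0$ this gives $\mu^s_j \to \mu_j$. The main (and essentially only) technical obstacle is the case $q - 2 > 1$, where the naive Hölder-continuity bound for $|t|^{q-2}$ fails and one must instead use the Lipschitz-type estimate with the weight $(|v(s)|+|\phi|)^{q-3}$ and argue that this extra weight is harmless because both $v(s)$ and $\phi$ are bounded in $L^q(\Omega)$ uniformly in $s$ (via \eqref{vLq}); choosing the Hölder exponents to simultaneously accommodate this weight, the two copies of $w$, and the factor $|v(s)-\phi|$ requires the bookkeeping above, but no genuinely new idea.
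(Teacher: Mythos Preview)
Your proposal is correct and follows essentially the same route as the paper: both invoke the variational characterization of Lemma~\ref{L:ev-var}, bound $\int_\Omega\bigl||v(s)|^{q-2}-|\phi|^{q-2}\bigr|w^2\,\d x$ via H\"older (splitting into the regimes $2<q<3$ and $q\geq3$ with the same pointwise estimates you describe), and then deduce \eqref{conv-ev} from \eqref{ev-pert} together with \eqref{H10-conv}. The only cosmetic differences are that the paper first isolates $\bigl\||v(s)|^{q-2}-|\phi|^{q-2}\bigr\|_{L^{q/(q-2)}}$ before estimating it, and your reference to ``Young's inequality'' in the $q>3$ case should simply read ``H\"older with one extra factor''.
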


\begin{proof}
Note that
\begin{align*}
\MoveEqLeft
 \int_\Omega |v(s)|^{q-2} w^2 \, \d x
\\
&\leq \int_\Omega |\phi|^{q-2} w^2 \, \d x
+ \int_\Omega \left| |v(s)|^{q-2} - |\phi|^{q-2} \right| w^2 \, \d x\\
&\leq \int_\Omega |\phi|^{q-2} w^2 \, \d x
+ C_q^2 \left\| |v(s)|^{q-2} - |\phi|^{q-2} \right\|_{L^{q/(q-2)}(\Omega)} \|w\|_{H^1_0(\Omega)}^2
\end{align*}
for $w \in H^1_0(\Omega)$. Taking the sup-inf of both sides as in Lemma \ref{L:ev-var}, we obtain
\begin{equation}\label{ev-pert-1}
\frac 1 {\mu^s_j} \leq \frac 1 {\mu_j} + C_q^2 \left\| |v(s)|^{q-2} - |\phi|^{q-2} \right\|_{L^{q/(q-2)}(\Omega)}.
\end{equation}

In case $2 < q < 3$, it follows that
$$
\left||v(x,s)|^{q-2}-|\phi(x)|^{q-2}\right| \leq \left||v(x,s)|-|\phi(x)|\right|
^{q-2}\leq \left|v(x,s)-\phi(x)\right|^{q-2},
$$
which yields
$$
\left\| |v(s)|^{q-2} - |\phi|^{q-2} \right\|_{L^{q/(q-2)}(\Omega)} \leq \|v(s)-\phi\|_{L^q(\Omega)}^{q-2}.
$$
In case $q \geq 3$, we observe that
\begin{align*}
\lefteqn{
\left||v(x,s)|^{q-2}-|\phi(x)|^{q-2}\right|
}\\
&\leq (q-2) \left(|v(x,s)|^{q-3} + |\phi(x)|^{q-3} \right) |v(x,s)-\phi(x)|.
\end{align*}
Hence
\begin{align*}
\MoveEqLeft
\left\| |v(s)|^{q-2} - |\phi|^{q-2} \right\|_{L^{q/(q-2)}(\Omega)}
\\
&\leq (q-2) \left( \|v(s)\|_{L^q(\Omega)}^{q-3} + \|\phi\|_{L^q(\Omega)}^{q-3} \right)\|v(s)-\phi\|_{L^q(\Omega)}.
\end{align*}
Therefore, since $v(s)$ is bounded in $L^q(\Omega)$ for $s \geq 0$, we conclude that
\begin{equation}\label{power-diff}
\left\| |v(s)|^{q-2} - |\phi|^{q-2} \right\|_{L^{q/(q-2)}(\Omega)} \leq c \|v(s)-\phi\|_{L^q(\Omega)}^\rho,
\end{equation}
where $\rho := \min \{q-2,1\} \in (0,1]$, for some constant $c \geq 0$ which depends on $q$ and $c(v)$ defined by \eqref{vLq}.

Consequently, it follows from \eqref{ev-pert-1} that
$$
\frac 1 {\mu^s_j} \leq \frac 1 {\mu_j} + C \| v(s) - \phi \|_{L^q(\Omega)}^{\rho}
$$
for any $s \geq 0$ and $j \in \N$. Here $C$ depends only on $q$, $C_q$ and $c(v)$. One can also prove the inverse inequality,
\begin{equation}\label{sim}
\frac 1 {\mu^s_j} \geq \frac 1 {\mu_j} - C \| v(s) - \phi \|_{L^q(\Omega)}^{\rho}
\end{equation}
in a similar fashion. Thus we conclude that 
$$
\left| \frac 1 {\mu^s_j} - \frac 1 {\mu_j} \right| \leq C \| v(s) - \phi \|_{L^q(\Omega)}^{\rho} \quad \mbox{ for } \ s \geq 0,
$$
which further implies
\begin{equation}\label{end2}
\left| \mu^s_j - \mu_j \right| \leq C \mu^s_j \mu_j \| v(s) - \phi \|_{L^q(\Omega)}^{\rho} \quad \mbox{ for } \ s \geq 0.
\end{equation}
From \eqref{sim} along with the positivity of $\mu_j$ and \eqref{H10-conv}, we observe that $\mu^s_j \leq 2\mu_j$ for $s \geq 0$ large enough. Hence we obtain
$$
\left| \mu^s_j - \mu_j \right| \leq 2C \mu_j^2 \| v(s) - \phi \|_{L^q(\Omega)}^{\rho} \quad \mbox{ for } \ s \gg 1,
$$
which along with \eqref{H10-conv} yields \eqref{conv-ev}. This completes the proof.
\end{proof}

As a corollary, we have the following:

\begin{remark}[Invertibility of $\Ls$ for $s > 0$ large enough]\label{R:Ls-inv}
\begin{enumerate}
\item[(i)] Since $\phi$ is nondegenerate, that is, $\nu_j = \mu_j - \lambda_q(q-1) \neq 0$ for any $j \in \N$, we find from \eqref{conv-ev} that $\nu^s_j = \mu^s_j - \lambda_q(q-1) \neq 0$ for $s > 0$ large enough. In particular, we deduce that $\Ls \in \mathscr{L}(H^1_0(\Omega), H^{-1}(\Omega))$ is invertible with its inverse $\Ls^{-1} \in \mathscr{L}(H^{-1}(\Omega),H^1_0(\Omega))$ for $s > 0$ large enough. 
\item[(ii)] For each $s \geq 0$, let $k(s) \in \N$ be the least number such that $\nu^s_{k(s)} = \mu^s_{k(s)} - \lambda_q (q-1)$ is positive. Let $k \in \N$ be the least number such that $\nu_k = \mu_k - \lambda_q(q-1) > 0$. Since $\nu_j \neq 0$ for any $j \in \N$, we deduce from \eqref{conv-ev} that $k(s) = k$ for $s > 0$ large enough. Hence, in what follows, we shall simply write $k$ instead of $k(s)$ for $s > 0$ large enough.
\end{enumerate}
\end{remark}

Moreover, we claim that 
\begin{lemma}\label{L:Lsinv-bdd}
There exists a constant $s_0 \geq 0$ such that
\begin{equation}\label{L-bdd}
\sup_{s \geq s_0} \|\Ls^{-1}\|_{\mathscr{L}(H^{-1}(\Omega),H^1_0(\Omega))} \leq 2 \|\Lphi^{-1}\|_{\mathscr{L}(H^{-1}(\Omega),H^1_0(\Omega))}.
\end{equation}
\end{lemma}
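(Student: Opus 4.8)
The plan is to treat $\Ls$ as a small bounded perturbation of $\Lphi$ and close the argument with a Neumann series. First I would write $\Ls = \Lphi - \lambda_q(q-1) R_s$, where $R_s \colon H^1_0(\Omega) \to H^{-1}(\Omega)$ denotes the multiplication operator $R_s w := \bigl(|v(s)|^{q-2} - |\phi|^{q-2}\bigr) w$. Since $\phi$ is nondegenerate, $\Lphi^{-1} \in \mathscr{L}(H^{-1}(\Omega),H^1_0(\Omega))$ is well defined and bounded (as recalled in \S\ref{Ss:As}), so one may factor
$$
\Ls = \Lphi\bigl(I - \lambda_q(q-1)\,\Lphi^{-1}R_s\bigr),
$$
with $\Lphi^{-1}R_s \in \mathscr{L}(H^1_0(\Omega))$.

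The key step is to show that $\|R_s\|_{\mathscr{L}(H^1_0(\Omega),H^{-1}(\Omega))} \to 0$ as $s \to +\infty$. For $w, \varphi \in H^1_0(\Omega)$, H\"older's inequality with exponents $q/(q-2), q, q$ gives
$$
\left| \langle R_s w, \varphi \rangle_{H^1_0(\Omega)} \right| \leq \bigl\| |v(s)|^{q-2} - |\phi|^{q-2} \bigr\|_{L^{q/(q-2)}(\Omega)} \, \|w\|_{L^q(\Omega)} \, \|\varphi\|_{L^q(\Omega)},
$$
and then the Sobolev--Poincar\'e inequality $\|\cdot\|_{L^q(\Omega)} \leq C_q \|\cdot\|_{H^1_0(\Omega)}$ together with \eqref{power-diff} yields
$$
\|R_s\|_{\mathscr{L}(H^1_0(\Omega),H^{-1}(\Omega))} \leq C_q^2 \bigl\| |v(s)|^{q-2} - |\phi|^{q-2} \bigr\|_{L^{q/(q-2)}(\Omega)} \leq C \|v(s)-\phi\|_{L^q(\Omega)}^{\rho},
$$
which tends to $0$ by \eqref{H10-conv} (recall $\rho = \min\{q-2,1\} \in (0,1]$).

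Consequently there is $s_0 \geq 0$ such that $\lambda_q(q-1)\,\|\Lphi^{-1}\|_{\mathscr{L}(H^{-1}(\Omega),H^1_0(\Omega))}\,\|R_s\|_{\mathscr{L}(H^1_0(\Omega),H^{-1}(\Omega))} \leq \tfrac12$ for all $s \geq s_0$. For such $s$, the operator $I - \lambda_q(q-1)\Lphi^{-1}R_s$ is invertible on $H^1_0(\Omega)$ with $\|(I - \lambda_q(q-1)\Lphi^{-1}R_s)^{-1}\|_{\mathscr{L}(H^1_0(\Omega))} \leq 2$ by the Neumann series; hence $\Ls$ is invertible with $\Ls^{-1} = (I - \lambda_q(q-1)\Lphi^{-1}R_s)^{-1}\Lphi^{-1}$, and taking operator norms gives exactly \eqref{L-bdd} (this also reconfirms the invertibility noted in Remark \ref{R:Ls-inv}). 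I do not expect a genuine obstacle here; the only point requiring a little care is the choice of H\"older exponents and the verification that $R_s$ lands in $H^{-1}(\Omega)$ with the stated norm bound, which is precisely where the estimate \eqref{power-diff} and the convergence \eqref{H10-conv} do the work.
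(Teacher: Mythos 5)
Your proposal is correct and follows essentially the same route as the paper's proof: the operator you call $I - \lambda_q(q-1)\Lphi^{-1}R_s$ is precisely the paper's map $T_s$, and both arguments rest on the same H\"older/Sobolev--Poincar\'e bound via \eqref{power-diff}, the convergence \eqref{H10-conv}, and a Neumann-series inversion of the perturbation to get the factor $2$. The only cosmetic difference is that you package the perturbation as a named operator $R_s$ and bound its $\mathscr{L}(H^1_0(\Omega),H^{-1}(\Omega))$-norm with $C_q^2$ up front, whereas the paper estimates the composite $\Lphi^{-1}[(|v(s)|^{q-2}-|\phi|^{q-2})w]$ directly; the content is identical.
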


\begin{proof}
We observe that
\begin{align}
\Ls w &= - \Delta w - \lambda_q (q-1) |v(s)|^{q-2} w\nonumber\\
&= \Lphi w - \lambda_q  (q-1) \left( |v(s)|^{q-2} - |\phi|^{q-2} \right) w\nonumber\\
&= \Lphi \left(
w - \lambda_q (q-1) \Lphi^{-1} \left[
\left( |v(s)|^{q-2} - |\phi|^{q-2} \right) w
\right]
\right)\nonumber\\
&=: \Lphi \left( T_s(w)\right) \quad \mbox{ for } \ w \in H^1_0(\Omega),\label{Ls-inv-rel}
\end{align}
where $T_s : H^1_0(\Omega) \to H^1_0(\Omega)$ is a bounded linear operator given by $T_s(w) = w - \lambda_q  (q-1) \Lphi^{-1} [(|v(s)|^{q-2} - |\phi|^{q-2}) w]$ for $w \in H^1_0(\Omega)$. Noting that
\begin{align*}
\MoveEqLeft{
\|\Lphi^{-1} [(|v(s)|^{q-2} - |\phi|^{q-2}) w] \|_{H^1_0(\Omega)}
}\\
&\leq \|\Lphi^{-1}\|_{\mathscr{L}(H^{-1}(\Omega), H^1_0(\Omega))} \|(|v(s)|^{q-2} - |\phi|^{q-2}) w \|_{H^{-1}(\Omega)}\\
&\leq C_q \|\Lphi^{-1}\|_{\mathscr{L}(H^{-1}(\Omega), H^1_0(\Omega))}
\||v(s)|^{q-2} - |\phi|^{q-2}\|_{L^{q/(q-2)}(\Omega)} \|w\|_{L^q(\Omega)}
\end{align*}
and recalling \eqref{power-diff} along with \eqref{H10-conv}, we can take $s_0 \geq 0$ large enough so that
$$
\lambda_q  (q-1) \|\Lphi^{-1} [(|v(s)|^{q-2} - |\phi|^{q-2}) w] \|_{H^1_0(\Omega)} \leq \frac 12 \|w\|_{H^1_0(\Omega)} \quad \mbox{ for } \ w \in H^1_0(\Omega)
$$
for all $s \geq s_0$. Hence $T_s$ turns out to be invertible such that $\|T_s^{-1}\|_{\mathscr{L}(\H^1_0(\Omega))} \leq 2$ for $s \geq s_0$. Therefore, thanks to \eqref{Ls-inv-rel}, we obtain
\begin{align*}
\|\Ls^{-1}\|_{\mathscr{L}(H^{-1}(\Omega),H^1_0(\Omega))} 
&\leq \|T_s^{-1}\|_{\mathscr{L}(H^1_0(\Omega))} \|\Lphi^{-1}\|_{\mathscr{L}(H^{-1}(\Omega),H^1_0(\Omega))}\\
&\leq 2 \|\Lphi^{-1}\|_{\mathscr{L}(H^{-1}(\Omega),H^1_0(\Omega))}
\quad \mbox{ for } \ s \geq s_0,
\end{align*}
which completes the proof.
\end{proof}

We close this subsection with the following:
\begin{corollary}\label{C:data_close}
Let $\phi \in \mathcal{X}$ be a nondegenerate \emph{least-energy} solution to \eqref{eq:1.10}, \eqref{eq:1.11}. Then for any $\vep > 0$ there exists a constant $r_\vep > 0$ {\rm (}independent of $v_0$ and $s${\rm )} such that
\begin{equation}
 |\mu^s_j - \mu_j| < \vep \quad \mbox{ for } \ s \geq 0 \ \mbox{ and } \ j \in \N,
\end{equation}
provided that $v_0 \in \mathcal{X}$ 
satisfies $\|v_0 - \phi\|_{H^1_0(\Omega)} < r_\vep$. Moreover, if $\vep > 0$ is small enough, it holds that $\nu^s_j \neq 0$ and $k(s) = 2$ for any $j \in \N$ and $s \geq 0$ and \eqref{L-bdd} holds with $s_0 = 0$ under the same assumption for $v_0$ {\rm (}cf.~see Remark {\rm \ref{R:Ls-inv}} and Lemma {\rm \ref{L:Lsinv-bdd})}.
\end{corollary}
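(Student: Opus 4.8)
The plan is to reduce the whole statement to the two perturbation lemmas already established, Lemma~\ref{L:ev-pert} and Lemma~\ref{L:Lsinv-bdd}, the only genuinely new ingredient being a \emph{uniform-in-$s$} trapping of the orbit $\{v(s)\}_{s\geq0}$ near $\phi$. First I would record that, $\phi$ being nondegenerate, the operator $\Lphi=J''(\phi)$ is an isomorphism of $H^1_0(\Omega)$ onto $H^{-1}(\Omega)$, so $\phi$ is an isolated zero of $J'$, i.e.\ it is isolated in $H^1_0(\Omega)$ from the other nontrivial solutions of \eqref{eq:1.10}, \eqref{eq:1.11}; since $\phi$ is moreover least-energy, \cite[Theorem~2]{AK13} (recalled in \S\ref{S:pre}) yields that $\phi$ is asymptotically stable, in particular \emph{stable} in the sense of Definition~\ref{D:stbl}(i). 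Thus for every $\eta>0$ there is $\delta(\eta)>0$, depending only on $\phi$ (equivalently on $\Omega$ and $q$) and not on the particular solution, such that $v_0\in\mathcal X$ with $\|v_0-\phi\|_{H^1_0(\Omega)}<\delta(\eta)$ forces $\sup_{s\geq0}\|v(s)-\phi\|_{H^1_0(\Omega)}<\eta$; by the Sobolev-Poincar\'e embedding this gives $\sup_{s\geq0}\|v(s)-\phi\|_{L^q(\Omega)}\leq C_q\eta$, and for $\eta\leq1$ it makes $c(v)=\sup_{s\geq0}\|v(s)\|_{L^q(\Omega)}\leq C_q(\|\phi\|_{H^1_0(\Omega)}+1)$ bounded by a constant independent of $v_0$.

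Given $\vep>0$, I would feed this trapping into Lemma~\ref{L:ev-pert}. The constant $C$ there depends only on $q$, $C_q$ and the now uniformly bounded $c(v)$, so \eqref{ev-pert} gives $\sup_{s\geq0}\sup_{j\in\N}\bigl|1/\mu^s_j-1/\mu_j\bigr|\leq C(C_q\eta)^{\rho}$, where $\rho=\min\{q-2,1\}\in(0,1]$. Arguing exactly as in the proof of that lemma (cf.\ \eqref{end2}, \eqref{sim}, where a perturbation small relative to $\mu_j$ produces the two-sided bound $\mu^s_j\leq2\mu_j$) this also controls $|\mu^s_j-\mu_j|$; choosing $\eta=\eta(\vep)$ small enough and setting $r_\vep:=\delta(\eta)$ makes $|\mu^s_j-\mu_j|<\vep$ uniformly in $s\geq0$ and $j\in\N$. (In fact only the reciprocal form is used in the sequel, through the two reference values $1/(\lambda_q(q-1))$ and $0$, so this step is robust.) By construction $r_\vep$ depends only on $\vep$ and $\phi$, not on $v_0$ or $s$.

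For the last assertions I would shrink $\vep$, hence $r_\vep$, further. By~\cite{Lin}, a nondegenerate least-energy $\phi$ has $k=2$, so $\nu_1<0<\nu_2$, i.e.\ $\mu_1<\lambda_q(q-1)<\mu_2$; thus $1/\mu_1$ and $1/\mu_2$ lie on opposite sides of $1/(\lambda_q(q-1))$, separated from it by a fixed distance $\eta_*>0$. Hence, once $\eta$ is so small that the uniform bound of the previous paragraph is $<\eta_*/2$, we obtain $\mu^s_1<\lambda_q(q-1)<\mu^s_2$ for every $s\geq0$, and since $(\mu^s_j)_j$ is nondecreasing this forces $\nu^s_1<0$ and $\nu^s_j>0$ for $j\geq2$ --- in particular $\nu^s_j\neq0$ for all $j\in\N$ --- as well as $k(s)=2$, uniformly in $s\geq0$. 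Finally, inspecting the proof of Lemma~\ref{L:Lsinv-bdd}, the estimate \eqref{L-bdd} at a given $s$ only requires $\lambda_q(q-1)\,\|\Lphi^{-1}\|_{\mathscr{L}(H^{-1}(\Omega),H^1_0(\Omega))}\,C_q\,\bigl\||v(s)|^{q-2}-|\phi|^{q-2}\bigr\|_{L^{q/(q-2)}(\Omega)}\leq\tfrac12$; by \eqref{power-diff} the left-hand side is $\leq c\,\|v(s)-\phi\|_{L^q(\Omega)}^{\rho}\leq c\,(C_q\eta)^{\rho}$ uniformly in $s$, hence $\leq\tfrac12$ for all $s\geq0$ once $\eta$ is small, so \eqref{L-bdd} holds with $s_0=0$. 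All constants above ($\eta_*$, $\|\Lphi^{-1}\|_{\mathscr{L}(H^{-1}(\Omega),H^1_0(\Omega))}$, $c$, $C_q$, $\mu_1$, $\mu_2$) depend only on $\phi$, not on $v_0$ or $s$.

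The main obstacle lies in the first paragraph and is conceptual rather than computational: one must upgrade the mere quasi-convergence $v(s_n)\to\phi$ (available for every orbit by the results recalled in the Introduction) to the \emph{full-orbit} estimate $\sup_{s\geq0}\|v(s)-\phi\|_{H^1_0(\Omega)}<\eta$ --- that is, to the \emph{stability}, not just the attractivity, of the least-energy profile $\phi$ --- and this rests on $\phi$ being isolated, which is exactly where nondegeneracy is used. The only other point needing attention is the uniformity, over $v_0$ in the small ball, of the constants $c(v)$, $\|\Lphi^{-1}\|_{\mathscr{L}(H^{-1}(\Omega),H^1_0(\Omega))}$ and $C_q$ appearing in Lemmas~\ref{L:ev-pert} and~\ref{L:Lsinv-bdd}: for $c(v)$ this is given by the trapping estimate of the first paragraph, and $\|\Lphi^{-1}\|$ and $C_q$ do not involve $v_0$ at all.
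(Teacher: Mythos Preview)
Your proposal is correct and follows essentially the same approach as the paper: use the stability of the least-energy profile $\phi$ from \cite[Theorem~2]{AK13} to trap the whole orbit $\{v(s)\}_{s\geq0}$ uniformly close to $\phi$, thereby bounding $c(v)$ independently of $v_0$, and then feed this into Lemma~\ref{L:ev-pert} and the proof of Lemma~\ref{L:Lsinv-bdd}. Your treatment of the ``moreover'' clause (the gap argument for $k(s)=2$ via $\mu_1<\lambda_q(q-1)<\mu_2$, and the explicit verification that the smallness condition in the proof of Lemma~\ref{L:Lsinv-bdd} holds for all $s\geq0$) is in fact spelled out more fully than in the paper, which leaves these points to the reader.
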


\begin{proof}
Let $\vep > 0$ be fixed. In what follows, in addition to $v_0 \in \mathcal{X}$, we always assume that $\|v_0 - \phi\|_{H^1_0(\Omega)} < 1$. Then we can take a constant $M > 0$ such that $c(v) = \sup_{s \geq 0}\|v(s)\|_{L^q(\Omega)} \leq M$, where $v$ denotes the energy solution to \eqref{eq:1.6}--\eqref{eq:1.8} with the initial datum $v_0$, for any $v_0 \in \mathcal{X}$ satisfying $\|v_0 - \phi\|_{H^1_0(\Omega)} < 1$ (see \eqref{v-bdd} and~\cite[Lemma 2]{A16}). We emphasize that $M$ can be taken uniformly for $v_0$ and $s$ satisfying the assumption above. Moreover, we observe from \eqref{ev-pert} that $\mu^s_j \leq 2 \mu_j$ for any $s \geq 0$, provided that 
$$
\sup_{s \geq 0}\|v(s)-\phi\|_{L^q(\Omega)}^\rho < \frac 1{2C\mu_j},
$$
where $C$ can now be taken uniformly for $v_0$ (instead, it may depend on $M$). It then follows from \eqref{end2} that
$$
|\mu^s_j - \mu_j| \leq 2C\mu_j^2 \|v(s)-\phi\|_{L^q(\Omega)}^\rho \quad \mbox{ for } \ s \geq 0.
$$
Thanks to the asymptotic stability result in~\cite[Theorem 2]{AK13}, there exists $r_\vep \in (0,1)$ such that
$$
\sup_{s \geq 0}\|v(s)-\phi\|_{L^q(\Omega)}^\rho < \min \left\{ \frac 1{2C\mu_j}, \frac \vep {2C\mu_j^2}\right\} 
$$
and $v(s) \to \phi$ strongly in $H^1_0(\Omega)$ as $s \to +\infty$,
whenever $v_0 \in \mathcal{X}$ and $\|v_0 - \phi\|_{H^1_0(\Omega)} < r_\vep$. Therefore we then obtain $|\mu^s_j - \mu_j| < \vep$ for any $s \geq 0$ and $j \in \N$.
\end{proof}

\subsection{Decomposition of the dual space}\label{Ss:dual-decomp}

In this subsection, we shall introduce a complete orthonormal system of $H^{-1}(\Omega)$ by means of the eigenfunctions and a Riesz map, and moreover, we shall discuss a spectral decomposition of the inverse $\Ls^{-1}$ of the linearized operator $\Ls$. Furthermore, it will eventually be proved that eigenfunctions of \eqref{eps} for \emph{positive} eigenvalues form a complete orthonormal system of $\H_s'$.

Recall that $-\Delta : H^1_0(\Omega) \to H^{-1}(\Omega)$ is a Riesz map and set 
$$
F_0^s = -\Delta E_0^s := \left\{ - \Delta w \colon w \in E_0^s \right\}
$$
for $s \geq 0$. Then for any $f \in H^{-1}(\Omega)$ and $s \geq 0$, one can take $f^s_0 \in F^s_0$ such that
\begin{equation}\label{f}
f = f_0^s + \sum_{j=1}^\infty \beta^s_j (-\Delta) e^s_j \ \mbox{ in } H^{-1}(\Omega), \quad \beta^s_j := \langle f, e^s_j \rangle_{H^1_0(\Omega)}.
\end{equation}
In what follows, we denote by $P_{F^s_0} : H^{-1}(\Omega) \to F^s_0$ the orthogonal projection onto $F^s_0$, that is, $P_{F^s_0}(f) = f^s_0$. One can derive from \eqref{f} along with \eqref{u-ext} and \eqref{Lu-ext} that
\begin{equation}\label{PO}
\Ls^{-1} f = \Linv f_0^s + \sum_{j=1}^\infty \beta^s_j \frac{\mu_j^s}{\mu_j^s - \lambda_q (q-1)} e^s_j
\end{equation}
for $s > 0$ large enough (so that $\nu^s_j = \mu^s_j - \lambda_q(q-1) \neq 0$ for $j \in \N$; see (i) of Remark \ref{R:Ls-inv}). Thus we obtain
\begin{equation}\label{fLinvf_0}
\langle f, \Ls^{-1} f \rangle_{H^1_0(\Omega)}
= \|f_0^s\|_{H^{-1}(\Omega)}^2 + \sum_{j=1}^\infty (\beta^s_j)^2 \frac{\mu_j^s}{\mu_j^s - \lambda_q (q-1)}
\end{equation}
for $f \in H^{-1}(\Omega)$ represented as \eqref{f} and $s > 0$ large enough.

We next have

\begin{lemma}\label{L:fs0=0}
For $f \in H^{-1}(\Omega)$ and $s \geq 0$, it holds that $P_{F^s_0}f = 0$ if and only if $f \in (E^s_0)^\perp$. In particular, if $f \in \H_s'$, then $P_{F^s_0} (\bar f) = 0$, where $\bar f$ is the zero extension of $f$ onto $\Omega$.
\end{lemma}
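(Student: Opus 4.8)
The plan is to unwind the definitions of the orthogonal projection $P_{F^s_0}$ and of $F^s_0 = -\Delta E^s_0$, and to exploit that $-\Delta$ is a Riesz isometry between $H^1_0(\Omega)$ and $H^{-1}(\Omega)$. First I would record the general fact that, under a Riesz map, orthogonal complements correspond: since $(E^s_0)^\perp$ is by definition $\{f \in H^{-1}(\Omega) \colon \langle f,\varphi\rangle_{H^1_0(\Omega)} = 0 \text{ for all } \varphi \in E^s_0\}$, and since for $f = -\Delta g$ one has $\langle f,\varphi\rangle_{H^1_0(\Omega)} = (g,\varphi)_{H^1_0(\Omega)}$, the map $-\Delta$ carries $(E^s_0)^{\perp_{H^1_0}}$ isometrically onto $(E^s_0)^{\perp}$ (the annihilator in $H^{-1}(\Omega)$), while it carries $E^s_0$ onto $F^s_0$. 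Hence $H^{-1}(\Omega) = F^s_0 \oplus (E^s_0)^\perp$ is an orthogonal decomposition with respect to the inner product $(\cdot,\cdot)_{H^{-1}(\Omega)}$ defined in \eqref{H-1product}, and $P_{F^s_0}$ is exactly the projection along $(E^s_0)^\perp$. From this the equivalence $P_{F^s_0}f = 0 \iff f \in (E^s_0)^\perp$ is immediate. Alternatively, and perhaps more cleanly, I would argue directly from the series representation \eqref{f}: the coefficients are $\beta^s_j = \langle f, e^s_j\rangle_{H^1_0(\Omega)}$, so $f \in (E^s_0)^\perp$ forces nothing on the $\beta^s_j$, but $P_{F^s_0}f = f^s_0$ is the component in $F^s_0 = -\Delta E^s_0$; since $\{(-\Delta)e^s_j\}_j$ together with an orthonormal basis of $F^s_0$ form a complete orthonormal system of $H^{-1}(\Omega)$ (as $\{e^s_j\}_j \cup (\text{basis of } E^s_0)$ does for $H^1_0(\Omega)$), one has $f^s_0 = 0$ precisely when $f$ is orthogonal in $H^{-1}(\Omega)$ to $F^s_0$, i.e. $(f, -\Delta w)_{H^{-1}(\Omega)} = \langle f, w\rangle_{H^1_0(\Omega)} = 0$ for all $w \in E^s_0$, which is the statement $f \in (E^s_0)^\perp$.

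For the second assertion, I would invoke Proposition \ref{P:embed}: if $f \in \H_s'$ then its zero extension $\bar f$ lies in $L^{q'}(\Omega) \hookrightarrow H^{-1}(\Omega)$, so $\bar f$ is a legitimate element of $H^{-1}(\Omega)$ to which the first part applies. It remains to check $\bar f \in (E^s_0)^\perp$. Take any $\varphi \in E^s_0$. Recall from the computation of $E^s_0 = N(A^s)$ in \S\ref{Ss:As} that $\varphi \in E^s_0$ means $|v(s)|^{q-2}\varphi = 0$ a.e.\ in $\Omega$, equivalently $\varphi = 0$ a.e.\ on $\Omega \setminus Z(s)$. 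Then
\[
\langle \bar f, \varphi\rangle_{H^1_0(\Omega)} = \int_\Omega \bar f\,\varphi \,\d x = \int_{\Omega\setminus Z(s)} f\,\varphi\,\d x = 0,
\]
since $\varphi$ vanishes a.e.\ on $\Omega\setminus Z(s)$ and $\bar f$ vanishes on $Z(s)$. (Here the first equality uses that the pairing of $L^{q'}$ against $H^1_0 \subset L^q$ is the integral, and that the integrand is in $L^1$ by Hölder since $\bar f \in L^{q'}(\Omega)$, $\varphi \in L^q(\Omega)$.) Hence $\bar f \in (E^s_0)^\perp$, and by the first part $P_{F^s_0}(\bar f) = 0$.

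The only mildly delicate point — the main thing to be careful about rather than a genuine obstacle — is keeping straight the two different $L^2$-type structures ($H^1_0(\Omega)$ with $(\cdot,\cdot)_{H^1_0}$ versus $H^{-1}(\Omega)$ with $(\cdot,\cdot)_{H^{-1}}$) and the role of the Riesz map $-\Delta$ intertwining them, so that "orthogonal projection onto $F^s_0$" and "annihilator of $E^s_0$" are matched correctly; once that bookkeeping is in place, both implications are formal. I do not expect any analytic difficulty: the integrability needed to justify writing the duality pairing as an integral is supplied by Proposition \ref{P:embed}, and the characterization of $E^s_0$ as functions supported in $Z(s)$ is already established in \S\ref{Ss:As}.
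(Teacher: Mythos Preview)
Your proposal is correct and follows essentially the same approach as the paper: both arguments hinge on the identity $(f,-\Delta\varphi)_{H^{-1}(\Omega)} = \langle f,\varphi\rangle_{H^1_0(\Omega)}$ to translate orthogonality to $F^s_0$ in $H^{-1}(\Omega)$ into membership in $(E^s_0)^\perp$, and the second part is handled identically via Proposition~\ref{P:embed} and the characterization of $E^s_0$ as functions vanishing a.e.\ on $\Omega\setminus Z(s)$. Your write-up is somewhat more explicit about the Riesz-map bookkeeping, but the substance is the same.
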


\begin{proof}
Fix $s \geq 0$. Let $f \in (E^s_0)^\perp \subset H^{-1}(\Omega)$ and let $f^s_0 = P_{F^s_0}(f)$. Then for any $\varphi \in E^s_0$, we see that
$$
(f,-\Delta \varphi)_{H^{-1}(\Omega)} = \langle f, \varphi \rangle_{H^1_0(\Omega)} = 0.
$$
Hence we have $\|f^s_0\|_{H^{-1}(\Omega)}^2 = (f, f^s_0)_{H^{-1}(\Omega)} = 0$, i.e., $f^s_0 = 0$. The inverse is obvious.

Let $f \in \H_s'$ and let $\bar f \in L^{q'}(\Omega) \hookrightarrow H^{-1}(\Omega)$ be the zero extension of $f$ onto $\Omega$ (see Proposition \ref{P:embed}). Then, for every $\varphi \in E^s_0$, since $\bar f = 0$ a.e.~in $Z(s)$ and $\varphi = 0$ a.e.~in $\Omega \setminus Z(s)$, we deduce that
$$
\langle \bar f, \varphi \rangle_{H^1_0(\Omega)} = \int_{\Omega \setminus Z(s)} f \varphi \, \d x = 0,
$$
that is, $\bar f \in (E^s_0)^\perp$. Hence we deduce from the above that $P_{F^s_0}(\bar f) = 0$.
\end{proof}

Furthermore, we conclude that 
\begin{lemma}\label{Hs'_CONS}
For each $s \geq 0$, the set $\{- \Delta e^s_j/\sqrt{\mu^s_j}\}_{j=1}^\infty$ forms a complete orthonormal system of the associate space $\H_s'$. Hence it holds that
\begin{equation}\label{norm_f-ext}
\|f\|_{\H_s'}^2 = \sum_{j=1}^\infty (\beta^s_j)^2 \mu^s_j, \quad \beta^s_j := \langle \bar f, e^s_j \rangle_{H^1_0(\Omega)} 
\end{equation}
for $f \in \H_s'$. Here $\bar f : \Omega \to \R$ denotes the zero extension of $f$ onto $\Omega$.
\end{lemma}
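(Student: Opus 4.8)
The plan is to verify, in this order, that $\{-\Delta e^s_j/\sqrt{\mu^s_j}\}_{j\geq 1}$ is orthonormal in $\H_s'$, to compute the $\H_s'$-Fourier coefficients of an arbitrary $f\in\H_s'$, and then to prove completeness; the norm identity \eqref{norm_f-ext} is then just Parseval's identity in the Hilbert space $\H_s'$. For orthonormality I would start from the eigen-equation \eqref{eps}, i.e.\ $-\Delta e^s_j=\mu^s_j|v(s)|^{q-2}e^s_j$ in $H^{-1}(\Omega)$; since $q>2$ the right-hand side vanishes on $Z(s)$, so $-\Delta e^s_j$ is the zero extension of the $\H_s'$-function $\mu^s_j|v(s)|^{q-2}e^s_j$ restricted to $\Omega\setminus Z(s)$. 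Hence on $\Omega\setminus Z(s)$ one has $|v(s)|^{2-q}(-\Delta e^s_i)(-\Delta e^s_j)=\mu^s_i\mu^s_j|v(s)|^{q-2}e^s_ie^s_j$, and integrating gives $(-\Delta e^s_i,-\Delta e^s_j)_{\H_s'}=\mu^s_i\mu^s_j(e^s_i,e^s_j)_{\H_s}$. Now $(e^s_i,e^s_j)_{\H_s}=\int_\Omega|v(s)|^{q-2}e^s_ie^s_j\,\d x=\lambda^s_i\langle-\Delta e^s_i,e^s_j\rangle_{H^1_0(\Omega)}=\lambda^s_i\delta_{ij}$, using $A^se^s_i=\lambda^s_ie^s_i$, $\lambda^s_i=1/\mu^s_i$ and the $H^1_0$-orthonormality of $\{e^s_j\}$ established in \S\ref{Ss:As}. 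Therefore $(-\Delta e^s_i,-\Delta e^s_j)_{\H_s'}=\mu^s_j\delta_{ij}$; in particular $-\Delta e^s_j\in\H_s'$ with $\|-\Delta e^s_j\|_{\H_s'}^2=\mu^s_j$, and $\{-\Delta e^s_j/\sqrt{\mu^s_j}\}_{j\geq 1}$ is orthonormal in $\H_s'$.

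For the Fourier coefficients, let $f\in\H_s'$. By Proposition \ref{P:embed}, $\bar f\in L^{q'}(\Omega)\hookrightarrow H^{-1}(\Omega)$, so $\langle\bar f,e^s_j\rangle_{H^1_0(\Omega)}=\int_\Omega\bar fe^s_j\,\d x=\int_{\Omega\setminus Z(s)}fe^s_j\,\d x$ because $\bar f=0$ a.e.\ on $Z(s)$. Using again $-\Delta e^s_j=\mu^s_j|v(s)|^{q-2}e^s_j$,
\[
(f,-\Delta e^s_j)_{\H_s'}=\int_{\Omega\setminus Z(s)}f\,|v(s)|^{2-q}\,\mu^s_j|v(s)|^{q-2}e^s_j\,\d x=\mu^s_j\int_{\Omega\setminus Z(s)}fe^s_j\,\d x=\mu^s_j\beta^s_j ,
\]
so the $j$-th coefficient of $f$ relative to $-\Delta e^s_j/\sqrt{\mu^s_j}$ equals $\sqrt{\mu^s_j}\,\beta^s_j$. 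For completeness, suppose $f\in\H_s'$ is $\H_s'$-orthogonal to every $-\Delta e^s_j$; then $\beta^s_j=\langle\bar f,e^s_j\rangle_{H^1_0(\Omega)}=0$ for all $j$. By Lemma \ref{L:fs0=0}, $P_{F^s_0}(\bar f)=0$, so the expansion \eqref{f} applied to $\bar f$ reduces to $\bar f=\sum_{j\geq 1}\beta^s_j(-\Delta)e^s_j=0$ in $H^{-1}(\Omega)$; since $\bar f\in L^{q'}(\Omega)$, this forces $\bar f=0$ a.e.\ in $\Omega$, hence $f=0$ a.e.\ in $\Omega\setminus Z(s)$, i.e.\ $f=0$ in $\H_s'$. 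Thus the orthonormal system is complete, and Parseval's identity together with the coefficient formula yields $\|f\|_{\H_s'}^2=\sum_{j\geq 1}\mu^s_j(\beta^s_j)^2$, which is \eqref{norm_f-ext}.

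The step carrying the actual content is completeness, where the only non-formal input is Lemma \ref{L:fs0=0} (that $P_{F^s_0}(\bar f)=0$ for $f\in\H_s'$) combined with the expansion \eqref{f}: once all coefficients $\beta^s_j$ vanish, these two facts already force $\bar f=0$ in $H^{-1}(\Omega)$. The point requiring a little care is that $\H_s'$ lives on $\Omega\setminus Z(s)$, so one must invoke the embedding $\H_s'\hookrightarrow L^{q'}(\Omega)$ of Proposition \ref{P:embed} and the injectivity of the zero-extension map $f\mapsto\bar f$ in order to conclude $f=0$ in $\H_s'$ from $\bar f=0$ in $H^{-1}(\Omega)$. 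No summability subtlety arises, since the argument only uses the implication ``all $\H_s'$-Fourier coefficients vanish $\Rightarrow f=0$'' rather than a reconstruction of $f$ from its series; the convergence of $\sum_j\mu^s_j(\beta^s_j)^2$ is then a consequence of Parseval, not an assumption.
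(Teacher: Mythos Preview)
Your proof is correct and follows essentially the same approach as the paper: orthonormality via the eigen-equation, the Fourier coefficient computation $(f,-\Delta e^s_j)_{\H_s'}=\mu^s_j\beta^s_j$, and completeness via Lemma~\ref{L:fs0=0} together with the expansion \eqref{f}. The only cosmetic differences are that the paper computes $(-\Delta e^s_i,-\Delta e^s_j)_{\H_s'}=\mu^s_j(e^s_i,e^s_j)_{H^1_0(\Omega)}$ directly rather than passing through $(e^s_i,e^s_j)_{\H_s}$, and that you spell out the injectivity of $L^{q'}(\Omega)\hookrightarrow H^{-1}(\Omega)$ to pass from $\bar f=0$ in $H^{-1}(\Omega)$ to $f=0$ in $\H_s'$, which the paper leaves implicit.
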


\begin{proof}
Fix $s \geq 0$. Note that $- \Delta e^s_j = \mu^s_j |v(s)|^{q-2} e^s_j$ vanishes a.e.~in $Z(s)$ and belongs to $\H_s'$ for $j \in \N$ (see Proposition \ref{P:assp}). We see that
\begin{align*}
(- \Delta e^s_i, - \Delta e^s_j)_{\H_s'}
&= \int_{\Omega\setminus Z(s)} (- \Delta e^s_i) (-\Delta e^s_j) |v(s)|^{2-q} \, \d x\\
&= \mu^s_j (e^s_i, e^s_j)_{H^1_0(\Omega)}
= \mu^s_j \delta_{ij}
\end{align*}
for $i,j \in \N$, that is, $\{-\Delta e^s_j/\sqrt{\mu^s_j}\}_{j=1}^\infty$ is an orthonormal system in $\H_s'$. We next prove that $\{-\Delta e^s_j/\sqrt{\mu^s_j}\}_{j=1}^\infty$ is complete in $\H_s'$. Let $f \in \H_s'$ be such that $(f, - \Delta e^s_j)_{\H_s'} = 0$ for all $j \in \N$. Due to Proposition \ref{P:embed}, the zero extension $\bar f$ of $f \in \H_s'$ belongs to $L^{q'}(\Omega) \hookrightarrow H^{-1}(\Omega)$. Noting that
$$
0 = (f, - \Delta e^s_j)_{\H_s'} = \mu^s_j \int_{\Omega \setminus Z(s)} f e^s_j \, \d x = \mu^s_j \langle \bar f, e^s_j \rangle_{H^1_0(\Omega)} \quad \mbox{ for } \ j \in \N
$$
and recalling \eqref{f}, we deduce that
$$
\bar f = P_{F^s_0}(\bar f).
$$
On the other hand, it follows that $P_{F^s_0}(\bar f) = 0$ from Lemma \ref{L:fs0=0} along with $f \in \H_s'$. Thus $f = 0$ in $\H_s'$. Consequently, $\{-\Delta e^s_j/\sqrt{\mu^s_j}\}_{j=1}^\infty$ turns out to be a complete orthonormal system of $\H_s'$.
\end{proof}

\subsection{Taylor expansion of the energy}\label{Ss:Taylor}

This subsection is concerned with a Taylor expansion of the energy functional $J$, which is at least of class $C^2$ in $H^1_0(\Omega)$ but may not be of class $C^3$ (e.g., for $q \in (2,3)$).
\begin{lemma}[Taylor expansion of the energy]\label{L:Taylor}
For each $s \geq 0$, it holds that
\begin{align}
 J(v(s)) - J(\phi)
&= \frac 12 \langle \Lphi (v(s)-\phi), v(s)-\phi \rangle_{H^1_0(\Omega)} + E(s)
\label{Tay1}
\end{align}
and
\begin{equation}\label{Tay2}
J'(v(s)) = \Lphi (v(s)-\phi) + e(s),
\end{equation}
where $s \mapsto E(s) \in \R$ and $s \mapsto e(s) \in H^{-1}(\Omega)$ denote generic functions satisfying
\begin{equation}\label{e}
E(s) \leq C \|v(s)-\phi\|_{L^q(\Omega)}^{2+\rho} \ \mbox{ and } \ \|e(s)\|_{H^{-1}(\Omega)} \leq C\|v(s)-\phi\|_{L^q(\Omega)}^{1+\rho}
\end{equation}
with $\rho = \min\{1,q-2\}>0$. Here the constant $C$ depends only on $q$, $C_q$ and $c(v)$ given by \eqref{vLq}.
\end{lemma}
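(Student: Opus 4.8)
The plan is to exploit that the quadratic part $\tfrac12\|\nabla w\|_{L^2(\Omega)}^2$ of $J$ is exact, so that \emph{every} remainder term comes from $-\tfrac{\lambda_q}q\int_\Omega|w|^q\,\d x$; the whole lemma then reduces to two elementary scalar inequalities plus H\"older. First I would record the algebraic identities. Using $\tfrac12\|\nabla v(s)\|_{L^2}^2-\tfrac12\|\nabla\phi\|_{L^2}^2=\tfrac12\|\nabla(v(s)-\phi)\|_{L^2}^2+\langle-\Delta\phi,v(s)-\phi\rangle_{H^1_0(\Omega)}$ together with $-\Delta\phi=\lambda_q|\phi|^{q-2}\phi$, one obtains
\[
E(s)=-\frac{\lambda_q}q\int_\Omega\Big(|v(s)|^q-|\phi|^q-q|\phi|^{q-2}\phi\,(v(s)-\phi)-\tfrac{q(q-1)}2|\phi|^{q-2}(v(s)-\phi)^2\Big)\,\d x ,
\]
and, since $J'(w)=-\Delta w-\lambda_q|w|^{q-2}w$ in $H^{-1}(\Omega)$,
\[
e(s)=-\lambda_q\Big(|v(s)|^{q-2}v(s)-|\phi|^{q-2}\phi-(q-1)|\phi|^{q-2}(v(s)-\phi)\Big).
\]

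Next I would prove the pointwise estimates: there is $C=C_q$ such that, for all $a,b\in\R$,
\[
\big||b|^{q-2}b-|a|^{q-2}a-(q-1)|a|^{q-2}(b-a)\big|\le C\big(|b-a|^{q-1}+|a|^{q-3}|b-a|^2\big),
\]
where the last term is kept only when $q\ge3$, and its once-integrated counterpart
\[
\big||b|^q-|a|^q-q|a|^{q-2}a(b-a)-\tfrac{q(q-1)}2|a|^{q-2}(b-a)^2\big|\le C\big(|b-a|^q+|a|^{q-3}|b-a|^3\big),
\]
again with the last term only for $q\ge3$. For the first, I would write the left side as $\int_0^1\big(h'(a+\theta(b-a))-h'(a)\big)(b-a)\,\d\theta$ with $h(t)=|t|^{q-2}t$, $h'(t)=(q-1)|t|^{q-2}$, and bound $|h'(x)-h'(a)|=(q-1)\big||x|^{q-2}-|a|^{q-2}\big|$ by subadditivity $|r_1^{q-2}-r_2^{q-2}|\le|r_1-r_2|^{q-2}$ when $2<q<3$, and by $|r_1^{q-2}-r_2^{q-2}|\le(q-2)\max(r_1,r_2)^{q-3}|r_1-r_2|$ together with $\max(r_1,r_2)^{q-3}\le C(|a|^{q-3}+|b-a|^{q-3})$ when $q\ge3$. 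The second follows from the first: writing the quadratic remainder of $g(t)=|t|^q$ as $\int_0^1\big(g'(a+\theta(b-a))-g'(a)-\theta g''(a)(b-a)\big)(b-a)\,\d\theta$ and noting $g'=q\,h$, $g''=q\,h'$, one applies the first inequality at base point $a$ with increment $\theta(b-a)$ and integrates in $\theta$.

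Finally I would substitute $a=\phi(x)$, $b=v(x,s)$ and integrate over $\Omega$. For $E(s)$, H\"older with exponents $\tfrac q{q-3},\tfrac q3$ gives $\int_\Omega|\phi|^{q-3}|v(s)-\phi|^3\,\d x\le\|\phi\|_{L^q(\Omega)}^{q-3}\|v(s)-\phi\|_{L^q(\Omega)}^3$ (only for $q\ge3$), hence $|E(s)|\le C\big(\|v(s)-\phi\|_{L^q(\Omega)}^q+\|\phi\|_{L^q(\Omega)}^{q-3}\|v(s)-\phi\|_{L^q(\Omega)}^3\big)$; since $\|\phi\|_{L^q(\Omega)}$ and $\|v(s)-\phi\|_{L^q(\Omega)}$ are bounded by a multiple of $c(v)$ (by \eqref{v-bdd}) and $q\ge2+\rho$, this collapses to $C\|v(s)-\phi\|_{L^q(\Omega)}^{2+\rho}$. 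For $e(s)$ I would use the embedding $L^{q'}(\Omega)\hookrightarrow H^{-1}(\Omega)$ (dual to $H^1_0(\Omega)\hookrightarrow L^q(\Omega)$, with constant $C_q$) and then H\"older with exponents $\tfrac{q-1}{q-3},\tfrac{q-1}2$ (only for $q\ge3$) to get $\|e(s)\|_{H^{-1}(\Omega)}\le C\big(\|v(s)-\phi\|_{L^q(\Omega)}^{q-1}+\|\phi\|_{L^q(\Omega)}^{q-3}\|v(s)-\phi\|_{L^q(\Omega)}^2\big)\le C\|v(s)-\phi\|_{L^q(\Omega)}^{1+\rho}$, with $\rho=\min\{1,q-2\}$; in both cases the constant depends only on $q$, $C_q$ and $c(v)$.

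The only point needing real care is the dichotomy $q\ge3$ versus $2<q<3$ in the two scalar inequalities, together with the matching observation that the H\"older exponents $q/3$ and $(q-1)/2$ are $\ge1$ exactly when $q\ge3$, so the potentially singular weight $|\phi|^{q-3}$ appears only in the range where $|\phi|^{q-3}\in L^{q/(q-3)}(\Omega)$; apart from this bookkeeping the argument is routine.
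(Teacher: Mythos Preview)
Your proposal is correct and follows essentially the same approach as the paper: both compute $E(s)$ and $e(s)$ explicitly as the second- and first-order Taylor remainders of $t\mapsto|t|^q$ and $t\mapsto|t|^{q-2}t$, respectively, and bound them via the elementary scalar inequality $\big||x|^{q-2}-|a|^{q-2}\big|\le|x-a|^{q-2}$ for $2<q<3$ (subadditivity) and the mean-value estimate for $q\ge3$, followed by H\"older. The only cosmetic difference is that the paper uses the Lagrange form of the remainder (a single intermediate point $\theta$) while you use the integral form, and the paper merely remarks that the case $q\ge3$ ``can be derived as in the proof of Lemma~\ref{L:ev-pert}'' whereas you spell out the extra $|\phi|^{q-3}$ terms and the matching H\"older exponents; your version is in fact slightly more detailed and yields the two-sided bound $|E(s)|\le C\|v(s)-\phi\|_{L^q(\Omega)}^{2+\rho}$ rather than just the one-sided bound stated.
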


\begin{proof}
Fix $s \geq 0$. In case $2 < q < 3$, by direct computation, we infer that
\begin{align*}
e(s) &:= J'(v(s)) - \Lphi (v(s)-\phi)\\
 &= - \lambda_q \left[ |v(s)|^{q-2}v(s) - |\phi|^{q-2}\phi - (q-1)|\phi|^{q-2}(v(s)-\phi) \right]\\
 &= - \lambda_q (q-1) \left[ |(1-\theta)v(s)+\theta\phi|^{q-2} - |\phi|^{q-2} \right] (v(s)-\phi),
\end{align*}
where $\theta \in (0,1)$ may depend on $x$ and $s$. Hence we observe that, for $\varphi \in H^1_0(\Omega)$,
\begin{align*}
\MoveEqLeft{
| \langle e(s), \varphi \rangle_{H^1_0(\Omega)} |
}\\
&\leq \lambda_q (q-1) \int_\Omega \left| |(1-\theta)v(s)+\theta \phi|^{q-2} - |\phi|^{q-2} \right| |v(s)-\phi| |\varphi|\, \d x\\
&\leq \lambda_q (q-1) \int_\Omega |1-\theta|^{q-2} |v(s)-\phi|^{q-1} |\varphi|\, \d x\\
&\leq \lambda_q (q-1) \int_\Omega |v(s)-\phi|^{q-1} |\varphi|\, \d x,
\end{align*}
which along with the arbitrariness of $\varphi \in H^1_0(\Omega)$ implies
$$
\|e(s)\|_{H^{-1}(\Omega)} \leq \lambda_q (q-1) C_q \|v(s)-\phi\|_{L^q(\Omega)}^{q-1}.
$$

Moreover, it follows from \eqref{Tay1} that
\begin{align*}
E(s) :=&\ J(v(s)) - J(\phi) - \frac 12 \langle \Lphi (v(s)-\phi), v(s)-\phi \rangle_{H^1_0(\Omega)}\\
=&\ \frac 1 2 \|\nabla v(s)\|_{L^2(\Omega)}^2 - \frac 1 2 \|\nabla \phi\|_{L^2(\Omega)}^2 - \frac{\lambda_q}{q} \|v(s)\|_{L^q(\Omega)}^q + \frac{\lambda_q}{q} \|\phi\|_{L^q(\Omega)}^q \\
&\quad - \frac 1 2 \|\nabla (v(s) - \phi)\|_{L^2(\Omega)}^2 + \frac{\lambda_q}2 (q-1)\int_\Omega |\phi|^{q-2} (v(s)-\phi)^2 \, \d x\\
=&\ \int_\Omega \nabla (v(s)-\phi) \cdot \nabla \phi \, \d x- \frac{\lambda_q}{q} \|v(s)\|_{L^q(\Omega)}^q + \frac{\lambda_q}{q} \|\phi\|_{L^q(\Omega)}^q \\
&\quad + \frac{\lambda_q}2 (q-1)\int_\Omega |\phi|^{q-2} (v(s)-\phi)^2 \, \d x\\
=&\ \lambda_q \int_\Omega |\phi|^{q-2}\phi (v(s)-\phi) \, \d x- \frac{\lambda_q}{q} \|v(s)\|_{L^q(\Omega)}^q + \frac{\lambda_q}{q} \|\phi\|_{L^q(\Omega)}^q \\
&\quad + \frac{\lambda_q}2 (q-1)\int_\Omega |\phi|^{q-2} (v(s)-\phi)^2 \, \d x\\
=&\ \frac{\lambda_q}2(q-1) \int_\Omega \left( -|(1-\theta)v(s)+\theta \phi|^{q-2} + |\phi|^{q-2} \right) (v(s)-\phi)^2 \, \d x
\end{align*}
for some constant $\theta \in (0,1)$ which may depend on $x$ and $s$. Hence one can similarly verify that
\begin{align*}
|E(s)|
 \leq \frac{\lambda_q}2 (q-1) \|v(s)-\phi\|_{L^q(\Omega)}^q
 \leq \frac{\lambda_q}2 (q-1) C_q^q \|v(s)-\phi\|_{H^1_0(\Omega)}^q.
\end{align*}
Thus \eqref{e} with $\rho = q-2$ follows.

In case $q \geq 3$, as in the proof of Lemma \ref{L:ev-pert}, we can also derive \eqref{Tay1} and \eqref{Tay2} along with \eqref{e} and $\rho = 1$. Then the constant $C$ may further depend on $c(v)$ as well.
\end{proof}

\subsection{Quantitative gradient inequality}\label{Ss:GI}

The following lemma provides a quantitative gradient inequality for $J(\cdot)$ and will play a crucial role in the proof of Theorem \ref{T:sc-conv}:

\begin{lemma}[Quantitative gradient inequality]\label{L:GI}
There exist constants $s_1 \geq 0$ and $C > 0$ such that
\begin{align}
 J(v(s)) - J(\phi)
&\leq \left( \frac 1 {2 \nu_k^s} + C \|v(s)-\phi\|_{H^1_0(\Omega)}^\rho \right) \| J'(v(s)) \|_{\H_s'}^2 \label{GI}
\end{align}
for all $s \geq s_1$. Here $\nu^s_k$ denotes the smallest positive eigenvalue of $\Ls$ {\rm (}see Remark {\rm \ref{R:Ls-inv}}{\rm )} and $\rho = \min\{1,q-2\} \in (0,1]$. Moreover, the constant $C$ depends only on $q$, $C_q$, $c(v)$ given in \eqref{vLq} and $\|\Lphi^{-1}\|_{\mathscr{L}(H^{-1}(\Omega),H^1_0(\Omega))}$ {\rm (}see Lemma {\rm\ref{L:Lsinv-bdd}}{\rm )}
\end{lemma}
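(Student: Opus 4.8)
The plan is to combine the Taylor expansion of the energy (Lemma~\ref{L:Taylor}) with the spectral decompositions established in \S\ref{Ss:dual-decomp}, evaluated along the inverse of the linearized operator $\Ls$, and then convert back to the $\H_s'$-norm using the complete orthonormal system from Lemma~\ref{Hs'_CONS}. Concretely, I would write $w = v(s) - \phi \in H^1_0(\Omega)$ and, from \eqref{Tay1} and \eqref{Tay2}, express
$$
J(v(s)) - J(\phi) = \tfrac12 \langle \Lphi w, w\rangle_{H^1_0(\Omega)} + E(s), \qquad J'(v(s)) = \Lphi w + e(s).
$$
The idea is to replace $\Lphi$ by $\Ls$ everywhere, paying an error controlled by $\|v(s)-\phi\|_{L^q(\Omega)}^\rho$ (using \eqref{power-diff} and the uniform bound \eqref{L-bdd} on $\Ls^{-1}$ from Lemma~\ref{L:Lsinv-bdd}), so that $w = \Ls^{-1}(\Ls w) = \Ls^{-1}\bigl(J'(v(s)) - e(s) + (\text{small}) \bigr)$. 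Then
$$
\langle \Ls w, w\rangle_{H^1_0(\Omega)} = \langle \Ls w, \Ls^{-1}(\Ls w)\rangle_{H^1_0(\Omega)},
$$
and applying \eqref{fLinvf_0} with $f = \Ls w$ (legitimate for $s$ large, by Remark~\ref{R:Ls-inv}) gives
$$
\langle \Ls w, \Ls^{-1}(\Ls w)\rangle_{H^1_0(\Omega)} = \|(\Ls w)_0^s\|_{H^{-1}(\Omega)}^2 + \sum_{j=1}^\infty (\beta_j^s)^2 \frac{\mu_j^s}{\mu_j^s - \lambda_q(q-1)},
$$
where $\beta_j^s = \langle \Ls w, e_j^s\rangle_{H^1_0(\Omega)}$.

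The key point is then to bound the right-hand side by $\tfrac{1}{\nu_k^s}\|\Ls w\|_{\H_s'}^2$. Writing $\mu_j^s/(\mu_j^s - \lambda_q(q-1)) = \mu_j^s/\nu_j^s$, and recalling that $\nu_k^s$ is the \emph{least positive} eigenvalue while the negative eigenvalues $\nu_1^s, \dots, \nu_{k-1}^s$ only make the corresponding terms negative, I would argue that the coefficient $\mu_j^s/\nu_j^s$ is bounded above by $\mu_j^s/\nu_k^s$ for every $j \geq k$ (and is negative for $j < k$), so that
$$
\langle \Ls w, \Ls^{-1}(\Ls w)\rangle_{H^1_0(\Omega)} \le \frac{1}{\nu_k^s} \sum_{j=1}^\infty (\beta_j^s)^2 \mu_j^s = \frac{1}{\nu_k^s}\|\Ls w\|_{\H_s'}^2,
$$
using \eqref{norm_f-ext}; here one must first check $\Ls w = -\Delta w - \lambda_q(q-1)|v(s)|^{q-2}w$ lies in $\H_s'$ (it does, since $J'(v(s)) = \partial_s(|v|^{q-2}v)(s) \in \H_s'$ by \S\ref{Ss:EI} and the $e(s)$ correction is in $L^{q'} \subset \H_s'$), and that its zero extension agrees with itself, so $(\Ls w)_0^s = P_{F_0^s}(\overline{\Ls w}) = 0$ by Lemma~\ref{L:fs0=0}. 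This kills the $\|(\Ls w)_0^s\|_{H^{-1}}^2$ term, which is the structural reason the argument closes. Combining with the factor $\tfrac12$ in the Taylor expansion yields the leading coefficient $\tfrac{1}{2\nu_k^s}$.

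Finally I would collect the error terms: the difference $\|\Ls w\|_{\H_s'}^2$ versus $\|J'(v(s))\|_{\H_s'}^2 = \|\Ls w + e(s)\|_{\H_s'}^2$ costs $O(\|e(s)\|\,\|\Ls w\|) = O(\|w\|_{L^q}^{1+\rho}\|w\|_{H^1_0})$ by \eqref{e} and Proposition~\ref{P:embed}; the difference $E(s)$ is $O(\|w\|_{L^q}^{2+\rho})$; and replacing $\Lphi$ by $\Ls$ inside $\langle \Lphi w, w\rangle$ costs $O(\|w\|_{L^q}^{2+\rho})$ via \eqref{power-diff}. Each of these is absorbed into a term $C\|v(s)-\phi\|_{H^1_0(\Omega)}^\rho \|J'(v(s))\|_{\H_s'}^2$ \emph{provided} one has a lower bound $\|J'(v(s))\|_{\H_s'} \gtrsim \|v(s)-\phi\|_{H^1_0(\Omega)}$ (equivalently $\gtrsim \|w\|_{L^q}$), which follows from $\Ls^{-1}$ being uniformly bounded: $\|w\|_{H^1_0} = \|\Ls^{-1}(\Ls w)\|_{H^1_0} \le C\|\Ls w\|_{H^{-1}} \le C\|\overline{\Ls w}\|_{L^{q'}} \le C\|\Ls w\|_{\H_s'}$ by Proposition~\ref{P:embed}, and then $\|\Ls w\|_{\H_s'} \le \|J'(v(s))\|_{\H_s'} + \|e(s)\|_{\H_s'}$ with the last term higher-order. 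Choosing $s_1 \ge s_0$ large enough (using \eqref{H10-conv}) so that all smallness requirements and the invertibility of $\Ls$ hold gives \eqref{GI}. The main obstacle, I expect, is the bookkeeping that guarantees the negative-eigenvalue terms genuinely help rather than hurt and that the projection onto $F_0^s$ vanishes — i.e., verifying carefully that $\Ls w \in \H_s'$ so that Lemmas~\ref{L:fs0=0} and~\ref{Hs'_CONS} apply with the \emph{dynamic} weight — together with tracking that every error constant depends only on $q$, $C_q$, $c(v)$ and $\|\Lphi^{-1}\|_{\mathscr{L}(H^{-1}(\Omega),H^1_0(\Omega))}$, uniformly in $s$.
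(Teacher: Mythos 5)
Your proposal is in the same circle of ideas as the paper's proof, and you correctly identify most of the key mechanisms, but there is a genuine gap at the central step: the claim that $\Ls w \in \H_s'$, where $w = v(s)-\phi$. You justify this via the inclusion $L^{q'}(\Omega) \subset \H_s'$, but that is backwards: Proposition~\ref{P:embed} gives the reverse embedding (the zero extension maps $\H_s'$ into $L^{q'}(\Omega)$). Membership in $\H_s'$ means integrability against the singular weight $|v(s)|^{2-q}$, which a generic $L^{q'}$ function does not satisfy. Concretely, writing
\begin{equation*}
\Ls w = J'(v(s)) + \lambda_q(q-1)\left(|\phi|^{q-2}-|v(s)|^{q-2}\right)w - e(s),
\end{equation*}
the second and third summands on the right do not vanish on the zero set $Z(s)$ of $v(s)$, precisely where the weight $|v(s)|^{2-q}$ blows up; hence $\Ls w$ is not in $\H_s'$ in general. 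Consequently you cannot invoke Lemma~\ref{Hs'_CONS} to identify $\sum_j(\beta_j^s)^2\mu_j^s$ with $\|\Ls w\|_{\H_s'}^2$, and you cannot invoke Lemma~\ref{L:fs0=0} to conclude $P_{F_0^s}(\Ls w)=0$ --- indeed $P_{F_0^s}(-\Delta w)$ need not vanish unless $w\perp E_0^s$ in $H^1_0(\Omega)$, so the term $\|(\Ls w)_0^s\|_{H^{-1}(\Omega)}^2$ you hope to kill is genuinely present and sits on the wrong side of the inequality.

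The paper avoids this by never asking for $\Ls w\in\H_s'$. It expands $\langle\Lphi w,w\rangle_{H^1_0(\Omega)} = \langle J'(v(s)),w\rangle_{H^1_0(\Omega)} - \langle e(s),w\rangle_{H^1_0(\Omega)}$, then substitutes the representation \eqref{v-phi} for $w$ \emph{only in the first duality pairing}, so that the spectral bound \eqref{fLinvf} is applied to $f=J'(v(s))$ rather than to $\Ls w$. This is legitimate because $J'(v(s))=-\partial_s(|v|^{q-2}v)(s)$ lies in $\H_s'$ by the equation together with \eqref{time-deri-decomp}--\eqref{ds-ineq}, and it manifestly vanishes on $Z(s)$, so Lemmas~\ref{L:fs0=0} and~\ref{Hs'_CONS} do apply to it. Your error accounting, the lower bound $\|J'(v(s))\|_{\H_s'}\gtrsim\|v(s)-\phi\|_{H^1_0(\Omega)}$, and the tracking of constant dependencies are all in order; once the object of the spectral bound is changed from $\Ls w$ to $J'(v(s))$, your argument collapses to the paper's.
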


\begin{proof}
Fix $s \geq s_0$ large enough in view of Remark \ref{R:Ls-inv} (see also Lemma \ref{L:Lsinv-bdd}). Let $f \in \H_s'$ be fixed and let $\bar f \in H^{-1}(\Omega)$ be the zero extension of $f$ onto $\Omega$. Then $\bar f$ can be expanded as in \eqref{f} with $f^s_0 = 0$ and $\beta^s_j := \langle \bar f, e^s_j \rangle_{H^1_0(\Omega)}$ for $j \in \N$ (see Lemma \ref{L:fs0=0}). Moreover, by virtue of \eqref{fLinvf_0} and Lemma \ref{Hs'_CONS}, we have
\begin{align}
\langle \bar f, \Ls^{-1} \bar f \rangle_{H^1_0(\Omega)}
&= \sum_{j=1}^\infty (\beta^s_j)^2 \frac{\mu_j^s}{\mu_j^s - \lambda_q (q-1)}\nonumber\\
&\leq \frac{1}{\mu_k^s - \lambda_q (q-1)} \sum_{j=k}^\infty (\beta^s_j)^2 \mu_j^s\nonumber\\
&\leq \frac{1}{\mu_k^s - \lambda_q (q-1)} \|f\|_{\H_s'}^2,\label{fLinvf}
\end{align}
where $\Ls = - \Delta - \lambda_q (q-1) |v(s)|^{q-2}$. 

We observe that
\begin{align}
J'(v(s)) &\stackrel{\eqref{Tay2}}{=} \Lphi (v(s)-\phi) + e(s)\nonumber\\
&= \Ls (v(s)-\phi) - \lambda_q (q-1) \left( |\phi|^{q-2} - |v(s)|^{q-2} \right) \left( v(s)-\phi \right)\nonumber\\
&\quad + e(s),\label{Tay1.5}
\end{align}
which implies that
\begin{align}
v(s)-\phi &= \Ls^{-1} \circ J'(v(s)) \nonumber \\
&\quad + \lambda_q (q-1) \Ls^{-1} \left[ \left( |\phi|^{q-2} - |v(s)|^{q-2} \right) \left( v(s)-\phi \right) \right] \nonumber\\
&\quad - \Ls^{-1} \left(e(s)\right).\label{v-phi}
\end{align}
We can derive from \eqref{power-diff} that
\begin{equation}\label{err-est}
\left\| \left( |\phi|^{q-2} - |v|^{q-2} \right) \left( v(s)-\phi \right) \right\|_{H^{-1}(\Omega)}
\leq c \|v(s)-\phi\|_{H^1_0(\Omega)}^{\rho+1},
\end{equation}
where $\rho = \min\{1,q-2\} \in (0,1]$ and $c$ depends only on $q$, $C_q$ and $c(v)$ given by \eqref{vLq}. Thus by \eqref{Tay2} of Lemma \ref{L:Taylor} and \eqref{Tay1.5}--\eqref{err-est}, we obtain
\begin{align*}
\MoveEqLeft{
\langle \Lphi (v(s)-\phi), v(s)-\phi \rangle_{H^1_0(\Omega)}
}\\
&= \langle J'(v(s)), v(s)-\phi \rangle_{H^1_0(\Omega)} - \langle e(s), v(s)-\phi \rangle_{H^1_0(\Omega)}\\
&= \langle J'(v(s)), \Ls^{-1} \circ J'(v(s)) \rangle_{H^1_0(\Omega)} \nonumber\\
&\quad + \lambda_q(q-1) \left\langle J'(v(s)), \Ls^{-1} \left[ \left( |\phi|^{q-2} - |v(s)|^{q-2} \right) \left( v(s)-\phi \right) \right] \right\rangle_{H^1_0(\Omega)}\nonumber\\
&\quad - \langle J'(v(s)), \Ls^{-1} (e(s)) \rangle_{H^1_0(\Omega)} - \langle e(s), v(s)-\phi \rangle_{H^1_0(\Omega)}\\
&\leq \langle J'(v(s)), \Ls^{-1} \circ J'(v(s)) \rangle_{H^1_0(\Omega)} \\
&\quad + C \left( \|v(s)-\phi\|_{H^1_0(\Omega)}^{\rho+2} + \|J'(v(s))\|_{H^{-1}(\Omega)} \|v(s)-\phi\|_{H^1_0(\Omega)}^{\rho+1}\right),
\end{align*}
where $C$ is a constant depending only on $q$, $C_q$, $c(v)$ given by \eqref{vLq} and $\|\Lphi^{-1}\|_{\mathscr{L}(H^{-1}(\Omega),H^1_0(\Omega))}$ (see Lemma \ref{L:Lsinv-bdd}).

Therefore combining this and \eqref{Tay1} along with \eqref{e}, we deduce that
\begin{align}
\lefteqn{
 J(v(s)) - J(\phi)
\leq \frac 12 \langle J'(v(s)), \Ls^{-1} \circ J'(v(s)) \rangle_{H^1_0(\Omega)}
}\nonumber\\
&\quad + C \left( \|v(s)-\phi\|_{H^1_0(\Omega)}^{\rho+2} + \|J'(v(s))\|_{H^{-1}(\Omega)} \|v(s)-\phi\|_{H^1_0(\Omega)}^{\rho+1}\right).
\label{GI-1}
\end{align}
Since $J'(v(s)) \in \H_s'$ (see \S \ref{Ss:EI}), it follows from \eqref{fLinvf} that
\begin{align}
\lefteqn{
J(v(s)) - J(\phi) \leq \frac 1 {2 \nu_k^s} \| J'(v(s)) \|_{\H_s'}^2
} \nonumber\\
&+ C \left( \|v(s)-\phi\|_{H^1_0(\Omega)}^{\rho+2} + \|J'(v(s))\|_{H^{-1}(\Omega)} \|v(s)-\phi\|_{H^1_0(\Omega)}^{\rho+1}\right).\label{GI0}
\end{align}
Moreover, we find from \eqref{Tay2} along with \eqref{e} again that
\begin{align*}
\|v(s)-\phi\|_{H^1_0(\Omega)}
& \leq \|\Lphi^{-1}\|_{\mathscr{L}(H^{-1}(\Omega);H^1_0(\Omega))} \|J'(v(s))\|_{H^{-1}(\Omega)}\\
&\quad + \|\Lphi^{-1}\|_{\mathscr{L}(H^{-1}(\omega),H^1_0(\Omega))} \|e(s)\|_{H^{-1}(\Omega)}\\
&\leq \|\Lphi^{-1}\|_{\mathscr{L}(H^{-1}(\omega),H^1_0(\Omega))} \|J'(v(s))\|_{H^{-1}(\Omega)}\\
&\quad + C \|\Lphi^{-1}\|_{\mathscr{L}(H^{-1}(\omega),H^1_0(\Omega))} \|v(s)-\phi\|_{H^1_0(\Omega)}^{\rho+1}.
\end{align*}
Since $\|v(s)-\phi\|_{H^1_0(\Omega)}$ is small enough, e.g., smaller than the constant $(2 C \|\Lphi^{-1}\|_{\mathscr{L}(H^{-1}(\omega),H^1_0(\Omega))})^{-1/\rho}$, for $s \geq 0$ large enough (see \eqref{H10-conv}), we get
\begin{align}\label{GIp}
\|v(s)-\phi\|_{H^1_0(\Omega)} 
\leq C \|J'(v(s))\|_{H^{-1}(\Omega)}
\leq C \|J'(v(s))\|_{\H_s'}
\end{align}
for $s > 0$ large enough. Here we used the fact that $J'(v(s))$ vanishes on $Z(s)$ and $J'(v(s))|_{\Omega \setminus Z(s)}$ lies on $\H_s'$ (see \eqref{time-deri-decomp}); hence $J'(v(s))$ coincides with the zero extension of $J'(v(s))|_{\Omega \setminus Z(s)}$ onto $\Omega$ (see also Proposition \ref{P:embed}). Combining this with \eqref{GI0}, we can take $s_1 \geq s_0$ large enough such that
\begin{align*}
 J(v(s)) - J(\phi)
&\leq \left( \frac 1 {2 \nu_k^s} + C \|v(s)-\phi\|_{H^1_0(\Omega)}^\rho \right) \| J'(v(s)) \|_{\H_s'}^2
\end{align*}
for $s \geq s_1$. Here the constant $C$ eventually depends only on $q$, $C_q$, $c(v)$ given by \eqref{vLq} and $\|\Lphi^{-1}\|_{\mathscr{L}(H^{-1}(\Omega),H^1_0(\Omega))}$ (see Lemma \ref{L:Lsinv-bdd}). This completes the proof.
\end{proof}

In particular, if $\phi$ is a nondegenerate least-energy solution to \eqref{eq:1.10}, \eqref{eq:1.11}, we can also take $s_0 = s_1 = 0$ whenever $v_0$ lies on $\mathcal{X}$ and is close enough to $\phi$ in $H^1_0(\Omega)$. Indeed, thanks to the stability result in~\cite[Theorem 2]{AK13}, we can then observe that $\|v(s)-\phi\|_{H^1_0(\Omega)}$ is small enough for any $s \geq 0$.

\subsection{Sharp rate of convergence}\label{Ss:SRC}

Combining Lemma \ref{L:GI} with \eqref{eneq2}, we infer that
\begin{align}
\MoveEqLeft
\frac 1 {q-1} \left( \frac 1 {2 \nu_k^s} + C \|v(s)-\phi\|_{H^1_0(\Omega)}^\rho \right)^{-1} \left[ J(v(s)) - J(\phi) \right] 
\nonumber\\
&\leq - \dfrac{\d}{\d s} J(v(s))\label{conclu*}
\end{align}
for $s \geq s_1$. Recalling \eqref{ev-pert}, we can take a constant $C$ such that
$$
\frac1{\nu_k^s} \leq \frac1{\nu_k} + C \|v(s)-\phi\|_{H^1_0(\Omega)}^{\rho}
$$
for $s \geq s_1$ large enough (here and henceforth, $s_1$ is replaced by such a large number). Indeed, by virtue of \eqref{ev-pert} and the mean-value theorem, we see that
\begin{align*}
 \frac 1 {\nu^s_k} &= \frac{1/\mu^s_k}{1 - \lambda_q(q-1)/\mu^s_k}\\
&\leq \frac{1/\mu_k + C \|v(s)-\phi\|_{H^1_0(\Omega)}^\rho}{1 - \lambda_q(q-1)/\mu_k - \lambda_q (q-1) C \|v(s)-\phi\|_{H^1_0(\Omega)}^\rho}\\
&\leq \frac 1 {\nu_k} + C \left( \frac{\mu_k^2}{\nu_k^2}+1\right) \|v(s)-\phi\|_{H^1_0(\Omega)}^\rho
\end{align*}
for $s \geq s_1$ large enough so that the denominator of the second line above is positive (see \eqref{H10-conv}). 

Set $H(s) := J(v(s)) - J(\phi)$ for $s \geq 0$. It then follows that
\begin{align}\label{H-ineq}
\dfrac{\d H}{\d s}(s) + \frac{2\nu_k}{q-1} H(s)
\leq C \|v(s)-\phi\|_{H^1_0(\Omega)}^\rho H(s)
\quad \mbox{ for } \ s \geq s_1,
\end{align}
where $C$ depends only on $q$, $C_q$, $c(v)$ given by \eqref{vLq}, $\mu_k$ and \\$\|\Lphi^{-1}\|_{\mathscr{L}(H^{-1}(\Omega),H^1_0(\Omega))}$ (see Lemma \ref{L:Lsinv-bdd}). Thus due to \eqref{H10-conv} for any $\lambda \in (0,\lambda_0)$ one can take a constant $C_\lambda > 0$ such that
$$
0 \leq J(v(s)) - J(\phi) \leq C_\lambda \e^{-\lambda s} \quad \mbox{ for } \ s \geq 0.
$$
Here we also used the fact that $J(v(s)) \leq J(v_0)$ for $s \geq 0$. 

On the other hand, let $\lambda \in (0,\lambda_0)$ be fixed. In particular, if $\phi$ is a nondegenerate least-energy solution to \eqref{eq:1.10}, \eqref{eq:1.11}, we can then assure that $\sup_{s \geq 0}\|v(s)-\phi\|_{H^1_0(\Omega)}$ is small enough and take $s_0 = s_1 = 0$, whenever $v_0 \in \mathcal X$  and $\|v_0 - \phi\|_{H^1_0(\Omega)} \ll 1$ (see~\cite[Theorem 2]{AK13}); therefore we can obtain
$$
0 \leq J(v(s)) - J(\phi) \leq C_\lambda \left(J(v_0) - J(\phi)\right) \e^{-\lambda s} \quad \mbox{ for } \ s \geq 0.
$$
Here we stress that $C_\lambda$ can be chosen as a constant independent of $v_0$ and $s$ (when $\phi$ and $v_0$ fulfill the assumptions mentioned just above).

Now, we prove the following lemma:
\begin{lemma}\label{L:JtoH10}
Assume that
\begin{equation}\label{p:ass}
0 \leq J(v(s)) - J(\phi) \leq c \e^{-\lambda s} \quad \mbox{ for } s \geq 0
\end{equation}
for some constants $\lambda > 0$ and $c > 0$. Then there exists a constant $C > 0$ such that
\begin{equation}\label{cl1}
\|v(s)-\phi\|_{H^1_0(\Omega)}^2 \leq C \e^{-\lambda s} \quad \mbox{ for } s \geq 0.
\end{equation}
In particular, let $\phi$ be a nondegenerate \emph{least-energy} solution to \eqref{eq:1.10}, \eqref{eq:1.11}. Then there exist constants $\delta > 0$ and $M \geq 0$ such that
\begin{equation}\label{cl2}
\|v(s)-\phi\|_{H^1_0(\Omega)}^2 \leq c M \e^{-\lambda s} \quad \mbox{ for } s \geq 0, 
\end{equation}
where $v$ is the energy solution to \eqref{eq:1.6}--\eqref{eq:1.8} with the initial datum $v_0$, provided that $v_0 \in \mathcal{X}$, $\|v_0 - \phi\|_{H^1_0(\Omega)} < \delta$ and \eqref{p:ass} holds.
\end{lemma}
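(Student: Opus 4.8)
The plan is the following. Write $w(s):=v(s)-\phi$, $H(s):=J(v(s))-J(\phi)\geq 0$, $\rho:=\min\{1,q-2\}\in(0,1]$ and
$$
\Phi(s):=\sup_{\tau\geq s}\|w(\tau)\|_{H^1_0(\Omega)},
$$
which is finite and \emph{nonincreasing}, and which tends to $0$ as $s\to+\infty$ by \eqref{H10-conv} (in the least-energy case this will instead follow from the stability result below); by \eqref{p:ass}, $H$ is nonincreasing with $H(s)\leq c\,\e^{-\lambda s}$. The whole statement reduces to the pointwise estimate
\begin{equation}\label{plan:key}
\|w(s)\|_{H^1_0(\Omega)}^2\leq C\bigl(H(s)+\Phi(s)^{2+\rho}\bigr)\qquad\text{for all }s\geq s_\ast
\end{equation}
for some $s_\ast\geq 0$. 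Indeed, taking the supremum over $\tau\geq s$ of the left-hand side of \eqref{plan:key} and using that $H$ and $\Phi$ are nonincreasing yields $\Phi(s)^2\leq CH(s)+C\Phi(s)^\rho\,\Phi(s)^2$ for $s\geq s_\ast$; since $\Phi(s)\to0$, one absorbs the last term for $s$ large to get $\Phi(s)^2\leq 2CH(s)\leq 2Cc\,\e^{-\lambda s}$, and the remaining bounded range of $s$ is handled by \eqref{v-bdd}, giving \eqref{cl1}.

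To establish \eqref{plan:key} I would combine the Taylor expansion $H(s)=\tfrac12\langle\Lphi w(s),w(s)\rangle_{H^1_0(\Omega)}+E(s)$ with $|E(s)|\leq C\|w(s)\|_{H^1_0(\Omega)}^{2+\rho}$ from Lemma \ref{L:Taylor}, with the spectral decomposition of $\Lphi$. Writing $w(s)=w_0(s)+\sum_{j\geq1}\alpha_j(s)e_j$ as in \eqref{u-ext} but with $v(s)$ replaced by $\phi$ (so $w_0(s)\in N(A^\phi)$ and the $e_j$ are the $H^1_0(\Omega)$-normalized eigenfunctions of \eqref{ep}), the quadratic form $w\mapsto\langle\Lphi w,w\rangle_{H^1_0(\Omega)}$ is diagonalized by this decomposition, equalling $\|w_0\|_{H^1_0(\Omega)}^2+\sum_{j\geq1}(\nu_j/\mu_j)\alpha_j^2$, where $\nu_j/\mu_j\geq\nu_k/\mu_k>0$ for $j\geq k$ and $\nu_j/\mu_j<0$ for the finitely many indices $j<k$ (recall $k$ is the least index with $\nu_k>0$). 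Setting $w_-(s):=\sum_{j<k}\alpha_j(s)e_j$ and $w_+(s):=w(s)-w_-(s)$, the cross terms vanish and one obtains
$$
\tfrac12\min\Bigl\{1,\tfrac{\nu_k}{\mu_k}\Bigr\}\,\|w_+(s)\|_{H^1_0(\Omega)}^2\leq H(s)+C\|w_-(s)\|_{H^1_0(\Omega)}^2+|E(s)|,
$$
so, since $\|w(s)\|^2_{H^1_0(\Omega)}=\|w_+(s)\|^2_{H^1_0(\Omega)}+\|w_-(s)\|^2_{H^1_0(\Omega)}$, \eqref{plan:key} follows once we prove the \emph{slaving} estimate
\begin{equation}\label{plan:mode}
\|w_-(s)\|_{H^1_0(\Omega)}\leq C\,\Phi(s)^{1+\rho}\qquad\text{for all }s\geq s_\ast
\end{equation}
for the finitely many unstable components.

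Estimate \eqref{plan:mode} is the crux, and I would prove it by deriving a scalar ODE for each unstable mode. Put $\zeta:=|v|^{q-2}v$ and $\zeta_\phi:=|\phi|^{q-2}\phi$; by \eqref{eq:1.6} and \S\ref{Ss:EI}, $\zeta\in W^{1,\infty}(0,\infty;H^{-1}(\Omega))$ with $\partial_s\zeta=-J'(v)$, $\partial_s\zeta\in L^2(0,\infty;L^{q'}(\Omega))$, and $\zeta(s)-\zeta_\phi\in L^{q'}(\Omega)$ for a.e.\ $s$. Writing $\zeta(s)-\zeta_\phi=(q-1)|\phi|^{q-2}w(s)+r(s)$ with $\|r(s)\|_{L^{q'}(\Omega)}\leq C\|w(s)\|_{H^1_0(\Omega)}^{1+\rho}$ (a power-difference bound of the same kind as in the proof of Lemma \ref{L:ev-pert}) and $J'(v)=\Lphi w+e(s)$ with $\|e(s)\|_{H^{-1}(\Omega)}\leq C\|w(s)\|_{H^1_0(\Omega)}^{1+\rho}$ from Lemma \ref{L:Taylor}, one checks that $b_j(s):=\langle\zeta(s)-\zeta_\phi,e_j\rangle_{H^1_0(\Omega)}$ is absolutely continuous, satisfies $b_j(s)=\tfrac{q-1}{\mu_j}\alpha_j(s)+\O(\|w(s)\|_{H^1_0(\Omega)}^{1+\rho})$, and, using $\langle\Lphi w,e_j\rangle_{H^1_0(\Omega)}=\tfrac{\nu_j}{\mu_j}\alpha_j(s)$,
$$
\frac{\d}{\d s}b_j(s)=-\frac{\nu_j}{q-1}\,b_j(s)+g_j(s),\qquad |g_j(s)|\leq C\|w(s)\|_{H^1_0(\Omega)}^{1+\rho}\ \text{ for a.e.\ }s.
$$
For $j<k$ the coefficient $\gamma_j:=-\nu_j/(q-1)$ is positive, so the only bounded solution of this equation is the backward-Duhamel one, $b_j(s)=-\int_s^\infty\e^{-\gamma_j(\tau-s)}g_j(\tau)\,\d\tau$; since $w(s)\to0$, hence $b_j(s)\to0$, this is our $b_j$, whence $|b_j(s)|\leq\bigl(\min_{i<k}\gamma_i\bigr)^{-1}\Phi(s)^{1+\rho}$. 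As $\|w_-(s)\|_{H^1_0(\Omega)}^2=\sum_{j<k}\alpha_j(s)^2$ and $\alpha_j(s)=\O(b_j(s))+\O(\|w(s)\|_{H^1_0(\Omega)}^{1+\rho})$, estimate \eqref{plan:mode} follows.

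For the ``in particular'' part: if $\phi$ is a nondegenerate least-energy solution then $k=2$ by \cite{Lin}, and by the asymptotic stability of \cite[Theorem 2]{AK13} one may fix $\delta>0$ so small that $v_0\in\mathcal X$ with $\|v_0-\phi\|_{H^1_0(\Omega)}<\delta$ forces $v(s)\to\phi$ strongly in $H^1_0(\Omega)$ and $\sup_{s\geq0}\|v(s)-\phi\|_{H^1_0(\Omega)}$ below any prescribed threshold; taking the threshold small enough turns every ``for all $s\geq s_\ast$'' above into ``for all $s\geq0$'' and legitimizes the absorption for every $s$, so that $\Phi(s)^2\leq2CH(s)\leq2Cc\,\e^{-\lambda s}$ for all $s\geq0$, which is \eqref{cl2} with $M:=2C$; here $C$ depends only on $q$, $C_q$, $\nu_1$, $\nu_2$, $\|\Lphi^{-1}\|_{\mathscr L(H^{-1}(\Omega),H^1_0(\Omega))}$ and a bound for $c(v)$ uniform over such $v_0$ (cf.\ the proof of Corollary \ref{C:data_close}), hence is independent of $v_0$. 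The main obstacle is \eqref{plan:mode}: one must justify the modal ODE rigorously inside the energy-solution framework (absolute continuity of $b_j$ via $\zeta\in W^{1,\infty}(0,\infty;H^{-1}(\Omega))$ together with the $L^{q'}$-integrability of $\partial_s\zeta$), control the quadratic remainder $r(s)$ in $L^{q'}(\Omega)$, and invoke the qualitative convergence $w(s)\to0$ to single out the unique bounded solution of each \emph{expanding} scalar equation. Once this is in place, the spectral decomposition of $\Lphi$ transfers the energy-gap rate in \eqref{p:ass} to the $H^1_0(\Omega)$-rate with no loss, and the final absorption is routine.
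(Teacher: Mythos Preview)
Your argument is correct, but it is a genuinely different route from the paper's. The paper introduces a second Lyapunov functional
\[
K(w)=\tfrac1{q'}\|w\|_{L^q}^q-\tfrac{\lambda_q}{2}\bigl\|\nabla(-\Delta)^{-1}(|w|^{q-2}w)\bigr\|_{L^2}^2,
\]
and exploits the algebraic identity $G(w):=J(w)-\lambda_q K(w)=\tfrac12\|J'(w)\|_{H^{-1}}^2$ (Lemma~\ref{L:coer}). Since $J'(w)=\Lphi(w-\phi)+\o(\|w-\phi\|_{H^1_0})$ and $\Lphi$ is invertible, this gives directly $G(v(s))-G(\phi)\gtrsim\|v(s)-\phi\|_{H^1_0}^2$; combining with $K(v(s))-K(\phi)\geq 0$ (monotonicity of $K$ along the flow) yields $\|v(s)-\phi\|_{H^1_0}^2\lesssim J(v(s))-J(\phi)$ in one stroke, with no need to isolate the unstable spectral block.

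What you do instead is a dynamical slaving argument: you show that each of the finitely many coordinates $b_j(s)$, $j<k$, solves an \emph{expanding} scalar ODE and hence, being bounded, must equal its backward Duhamel integral, which is of higher order $\Phi(s)^{1+\rho}$. This is precisely the mechanism used in the paper's Section~\ref{S:opt} (but there in the forward direction, for a lower bound), so it is consistent with the framework and rigorous within the energy-solution setting. Your approach is closer in spirit to invariant-manifold arguments and would generalize more readily if one wanted, say, mode-by-mode asymptotics; the paper's approach is shorter and exploits a structural identity specific to this problem (the existence of $K$ with $G=\tfrac12\|J'\|_{H^{-1}}^2$) that absorbs the negative part of the Hessian automatically.
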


To prove this lemma, recall an entropy functional $K : H^1_0(\Omega) \to \R$ defined by
$$
K(w) = \frac{1}{q'} \|w\|_{L^q(\Omega)}^q - \frac{\lambda_q}2\left\|\nabla (-\Delta)^{-1}\left(|w|^{q-2}w\right)\right\|_{L^2(\Omega)}^2 \ \mbox{ for } \  w \in H^1_0(\Omega),
$$
which is another Lyapunov functional, that is, $s \mapsto K(v(s))$ is nonincreasing for every energy solution $v = v(x,s)$ to \eqref{eq:1.6}--\eqref{eq:1.8} (see~\cite[p.567]{AK13}). The following lemma provides a coercive estimate for the functional $G : H^1_0(\Omega) \to \R$ given by
\begin{align*}
G(w) &:= J(w) - \lambda_q K(w)\\
&= \frac 12 \|\nabla w\|_{L^2(\Omega)}^2 - \lambda_q \|w\|_{L^q(\Omega)}^q + \frac{\lambda_q^2}2 \left\|\nabla (-\Delta)^{-1}\left(|w|^{q-2}w\right)\right\|_{L^2(\Omega)}^2 
\end{align*}
for $w \in H^1_0(\Omega)$. One can directly check that $G(\phi)=0$ if $J'(\phi)=0$, and $G(w)$ will play a crucial role to prove Lemma \ref{L:JtoH10}. Moreover, the following lemma may also be of independent interest.

\begin{lemma}[Coercivity estimate for $G$ near $\phi$]\label{L:coer}
For the functional $G$ defined above, it holds that
\begin{align*}
G(w)= \frac 12 \|J'(w)\|_{H^{-1}(\Omega)}^2
\end{align*}
for all $w \in H^1_0(\Omega)$. As a corollary, $G(w)=0$ if and only if $J'(w)=0$. In addition, if $\phi$ is a weak solution to \eqref{eq:1.10}, \eqref{eq:1.11} {\rm (}that is, $J'(\phi) = 0${\rm )}, and if $\phi$ is nondegenerate, then for any $\vep\in (0,1)$ there exists a constant $\delta_\epsilon>0$ such that
\begin{align}\label{G-coer}
G(w) - G(\phi) \geq \frac{1-\vep}2 \|\Lphi^{-1}\|_{\mathscr{L}(H^{-1}(\Omega), H^1_0(\Omega))}^{-2} \|\nabla w - \nabla \phi\|_{L^2(\Omega)}^2,
\end{align}
provided that $w \in H^1_0(\Omega)$ and $\|w - \phi\|_{L^q(\Omega)} < \delta_\vep$. In particular, if $K(w)-K(\phi) \geq -c \|\nabla w - \nabla \phi\|_{L^2(\Omega)}^2$ for some constant $c$ satisfying $0 < c < (2\lambda_q)^{-1} \|\Lphi^{-1}\|_{\mathscr{L}(H^{-1}(\Omega), H^1_0(\Omega))}^{-2}$, then \eqref{G-coer} further gives a strict coercive estimate for $J(w)-J(\phi)$.
\end{lemma}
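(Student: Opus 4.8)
Sketch of the plan. The identity $G(w)=\tfrac12\|J'(w)\|_{H^{-1}(\Omega)}^2$ carries the real content and is purely algebraic, so I would establish it first. Writing $J'(w)=-\Delta w-\lambda_q|w|^{q-2}w$ in $H^{-1}(\Omega)$ and recalling from the Notation that $\|g\|_{H^{-1}(\Omega)}^2=\langle g,\Linv g\rangle_{H^1_0(\Omega)}$, one has $\Linv J'(w)=w-\lambda_q\Linv(|w|^{q-2}w)$, so that expanding the pairing $\langle J'(w),\Linv J'(w)\rangle_{H^1_0(\Omega)}$ into four terms and using $\langle-\Delta w,\Linv g\rangle_{H^1_0(\Omega)}=\langle g,w\rangle_{H^1_0(\Omega)}$ (as $-\Delta$ is the Riesz isometry between $H^1_0(\Omega)$ and $H^{-1}(\Omega)$) together with $\langle g,\Linv g\rangle_{H^1_0(\Omega)}=\|\nabla\Linv g\|_{L^2(\Omega)}^2$, applied with $g=|w|^{q-2}w$, gives
\[
\|J'(w)\|_{H^{-1}(\Omega)}^2=\|\nabla w\|_{L^2(\Omega)}^2-2\lambda_q\|w\|_{L^q(\Omega)}^q+\lambda_q^2\bigl\|\nabla\Linv(|w|^{q-2}w)\bigr\|_{L^2(\Omega)}^2=2G(w).
\]
The equivalence $G(w)=0\iff J'(w)=0$ is then immediate, and in particular $G(\phi)=0$ whenever $\phi$ solves \eqref{eq:1.10}, \eqref{eq:1.11}.

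For \eqref{G-coer} I would combine this identity with a Taylor expansion of $J'$ at $\phi$. The pointwise computation in the proof of Lemma \ref{L:Taylor} applies verbatim to an arbitrary $w\in H^1_0(\Omega)$ in place of $v(s)$ and yields $J'(w)=\Lphi(w-\phi)+e(w)$ with $\|e(w)\|_{H^{-1}(\Omega)}\le C\|w-\phi\|_{L^q(\Omega)}^{1+\rho}$, the constant $C$ depending only on $q$, $C_q$ and an upper bound for $\|w\|_{L^q(\Omega)}$ (hence uniform for $w$ close to $\phi$). Since $\phi$ is nondegenerate, $\Lphi$ is invertible, so $\|\Lphi(w-\phi)\|_{H^{-1}(\Omega)}\ge\|\Lphi^{-1}\|_{\mathscr{L}(H^{-1}(\Omega),H^1_0(\Omega))}^{-1}\|\nabla w-\nabla\phi\|_{L^2(\Omega)}$; together with the Sobolev--Poincar\'e bound $\|w-\phi\|_{L^q(\Omega)}^{1+\rho}\le C_q\|w-\phi\|_{L^q(\Omega)}^{\rho}\|\nabla w-\nabla\phi\|_{L^2(\Omega)}$ and the reverse triangle inequality, this gives
\[
\sqrt{2G(w)}=\|J'(w)\|_{H^{-1}(\Omega)}\ge\Bigl(\|\Lphi^{-1}\|_{\mathscr{L}(H^{-1}(\Omega),H^1_0(\Omega))}^{-1}-CC_q\|w-\phi\|_{L^q(\Omega)}^{\rho}\Bigr)\|\nabla w-\nabla\phi\|_{L^2(\Omega)}.
\]
Choosing $\delta_\vep>0$ so small that the parenthesis is at least $\sqrt{1-\vep}\,\|\Lphi^{-1}\|_{\mathscr{L}(H^{-1}(\Omega),H^1_0(\Omega))}^{-1}$ whenever $\|w-\phi\|_{L^q(\Omega)}<\delta_\vep$, then squaring and recalling $G(\phi)=0$ yields \eqref{G-coer}. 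I note that no $H^1_0$-smallness of $w-\phi$ is needed here, only $L^q$-smallness, precisely because the error term carries the extra power $\rho>0$.

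For the last assertion I would write $J(w)-J(\phi)=[G(w)-G(\phi)]+\lambda_q[K(w)-K(\phi)]$ (using $G=J-\lambda_q K$ and $G(\phi)=0$), bound the first bracket below by \eqref{G-coer} and the second by the hypothesis $K(w)-K(\phi)\ge -c\|\nabla w-\nabla\phi\|_{L^2(\Omega)}^2$, obtaining
\[
J(w)-J(\phi)\ge\Bigl(\tfrac{1-\vep}{2}\|\Lphi^{-1}\|_{\mathscr{L}(H^{-1}(\Omega),H^1_0(\Omega))}^{-2}-\lambda_q c\Bigr)\|\nabla w-\nabla\phi\|_{L^2(\Omega)}^2,
\]
whose coefficient is strictly positive once $\vep$ is taken small, since $c<(2\lambda_q)^{-1}\|\Lphi^{-1}\|_{\mathscr{L}(H^{-1}(\Omega),H^1_0(\Omega))}^{-2}$. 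The only genuinely delicate point in the whole argument is the first identity, where one must keep the duality pairing $\langle\cdot,\cdot\rangle_{H^1_0(\Omega)}$ and the $H^{-1}(\Omega)$ inner product straight and exploit that $-\Delta$ is the Riesz isometry; everything afterwards is a soft perturbation argument resting on the nondegeneracy of $\phi$ and the Taylor estimate of Lemma \ref{L:Taylor}.
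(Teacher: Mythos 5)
Your proposal is correct and follows essentially the same route as the paper's own proof: the identity $G(w)=\tfrac12\|J'(w)\|_{H^{-1}(\Omega)}^2$ is verified by the same completed-square computation in $H^{-1}(\Omega)$ (you expand, the paper regroups—same calculation), and \eqref{G-coer} is obtained by the same reverse-triangle argument $\|J'(w)\|_{H^{-1}}\ge\|\Lphi(w-\phi)\|_{H^{-1}}-\lambda_q\|\mathcal R(w,\phi)\|_{H^{-1}}$ combined with the invertibility of $\Lphi$ and the superlinear bound on the residual $\mathcal R(w,\phi)$ (which is precisely the error term $e$ of Lemma \ref{L:Taylor}). The final implication for $J(w)-J(\phi)$ is handled as in the paper via $J=G+\lambda_q K$.
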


\begin{proof}
By direct computation, we have, for $w\in H^1_0(\Omega)$,
\begin{align*}
\lefteqn{
G(w) 
}\\
&= \frac 12 \|\nabla w \|_{L^2(\Omega)}^2 + \frac{\lambda_q^2}2 \left\| \nabla (-\Delta)^{-1} (|w|^{q-2}w ) \right\|_{L^2(\Omega)}^2 - \lambda_q \int_\Omega |w|^q  \, \d x\\
&= \frac 12 \|-\Delta w \|_{H^{-1}(\Omega)}^2 + \frac{\lambda_q^2}2 \left\| |w|^{q-2}w\right\|_{H^{-1}(\Omega)}^2 - \lambda_q \langle -\Delta w, |w|^{q-2}w\rangle_{H^{-1}(\Omega)} \\
&= \frac 12 \left\| -\Delta w - \lambda_q  |w|^{q-2}w \right\|_{H^{-1}(\Omega)}^2\\
&= \frac 12 \left\| J'(w)\right\|_{H^{-1}(\Omega)}^2.
\end{align*}
Next we write $J'(w) = \Lphi (w -\phi) -\lambda_q \mathcal{R} (w,\phi)$ by using $J'(\phi)=0$, where the residual term $\mathcal{R}(w,\phi) \in H^{-1}(\Omega)$ is given by
\begin{equation}\label{R}
\mathcal{R}(w,\phi) := |w|^{q-2}w - |\phi|^{q-2}\phi - (q-1) |\phi|^{q-2} (w-\phi)
\end{equation}
and fulfills that
\begin{align}\label{R-est}
\lefteqn{
\|\mathcal{R}(w,\phi)\|_{L^{q'}(\Omega)}
}\nonumber\\
&\leq \begin{cases}
      (q-1)\|w-\phi\|_{L^q(\Omega)}^{q-1} &\mbox{if } \ q \in (2,3),\\
      \frac{(q-1)(q-2)}2 \left( \|w\|_{L^q(\Omega)}^{q-3} + \|\phi\|_{L^q(\Omega)}^{q-3}\right) \|w-\phi\|_{L^q(\Omega)}^2 &\mbox{if } \ q \geq 3.
     \end{cases}
\end{align}
Then we observe that
\begin{align*}
\lefteqn{
\|\Lphi(w-\phi) - \lambda_q \mathcal{R}(w,\phi)\|_{H^{-1}(\Omega)}
}\\
&\geq \|\Lphi(w-\phi)\|_{H^{-1}(\Omega)} - \lambda_q \|\mathcal{R}(w,\phi)\|_{H^{-1}(\Omega)}\\
&\geq \|\Lphi^{-1}\|_{\mathscr{L}(H^{-1}(\Omega),H^1_0(\Omega))}^{-1} \| w-\phi\|_{H^1_0(\Omega)} - \lambda_q C_q \|\mathcal{R}(w,\phi)\|_{L^{q'}(\Omega)}.
\end{align*}
Hence for any $\vep \in (0,1)$ one can take $\delta_\vep > 0$ small enough that
\begin{align*}
\lefteqn{
\|\Lphi(w-\phi) - \lambda_q \mathcal{R}(w,\phi)\|_{H^{-1}(\Omega)}
}\\
&\geq \sqrt{1-\vep} \|\Lphi^{-1}\|_{\mathscr{L}(H^{-1}(\Omega),H^1_0(\Omega))}^{-1} \|\nabla w-\nabla \phi\|_{L^2(\Omega)},
\end{align*}
provided that $\|w - \phi\|_{L^q(\Omega)} < \delta_\vep$. Thus the latter assertion follows. This completes the proof. 
\end{proof}

Now, we are ready to prove Lemma \ref{L:JtoH10}.

\begin{proof}[Proof of Lemma {\rm \ref{L:JtoH10}}]
Setting $\vep = 1/2$ and recalling \eqref{H10-conv}, one can take $s_* > 0$ large enough that
$$
\sup_{s \geq s_*} \|v(s)-\phi\|_{L^q(\Omega)} < \delta_\vep,
$$
where $\delta_\vep > 0$ is the constant appeared in Lemma \ref{L:coer}. Hence it follows from Lemma \ref{L:coer} that
\begin{equation}\label{Gv-coer}
 G(v(s)) - G(\phi) \geq \frac 1 4 \|\Lphi^{-1}\|_{\mathscr{L}(H^{-1}(\Omega), H^1_0(\Omega))}^{-2} \|\nabla v(s) - \nabla \phi\|_{L^2(\Omega)}^2
\end{equation}
for $s \geq s_*$. Moreover, we also recall that
$$
K(v(s)) - K(\phi) \geq 0 \quad \mbox{ for } \ s \geq 0,
$$
which along with \eqref{Gv-coer} implies
\begin{align*}
\|\nabla v(s) - \nabla \phi\|_{L^2(\Omega)}^2
&\leq 4 \|\Lphi^{-1}\|_{\mathscr{L}(H^{-1}(\Omega), H^1_0(\Omega))}^2 \left( J(v(s)) - J(\phi) \right) \\
&\leq 4 \|\Lphi^{-1}\|_{\mathscr{L}(H^{-1}(\Omega), H^1_0(\Omega))}^2 c \e^{-\lambda s}
\quad \mbox{ for } \ s \geq s_*. 
\end{align*}
Since $v(s)$ is bounded in $H^1_0(\Omega)$ for any $s \geq 0$, \eqref{cl1} follows.

In particular, if $\phi$ is a nondegenerate least-energy solution to \eqref{eq:1.10}, \eqref{eq:1.11} , thanks to~\cite[Theorem 2]{AK13}, for any $\vep > 0$, one can take $\delta > 0$ such that $\sup_{s \geq 0}\|v(s)-\phi\|_{H^1_0(\Omega)} < \vep$, where $v$ is the energy solution to \eqref{eq:1.6}--\eqref{eq:1.8} with the initial datum $v_0$, whenever $v_0 \in \mathcal{X}$ and $\|v_0 - \phi\|_{H^1_0(\Omega)} < \delta$. Hence we can take $s_* = 0$. Thus \eqref{cl2} follows. This completes the proof.
\end{proof} 

\begin{remark}[An alternative proof]
We can also prove Lemma \ref{L:JtoH10} as in~\cite[Lemma 4.1]{A21} with slight modifications due to the time-dependence of $\H_s$ and $\H_s'$.
\prf{\begin{proof}[An alternative proof of Lemma {\rm \ref{L:JtoH10}}]
Indeed, recall \eqref{eneq} and apply \eqref{ds-ineq} in a slightly different manner now than before to see that
\begin{align*}
- \dfrac{\d}{\d s} J(v(s))
&\geq 
 \frac{4(q-1)}{q^2} \left\|\partial_s (|v|^{(q-2)/2}v)(s)\right\|_{L^2(\Omega)}^2 \\
&\geq \frac 2 q \left\|\partial_s (|v|^{q-2}v)(s)\right\|_{\H_s'} \left\|\partial_s (|v|^{(q-2)/2}v)(s)\right\|_{L^2(\Omega)},
\end{align*}
which along with \eqref{GI} again (with the fact that $\nu^s_k$ is uniformly away from zero for $s > 0$ large enough) implies that
\begin{align*}
\MoveEqLeft{
\left(J(v(s))-J(\phi)\right)^{1/2} \left\|\partial_s (|v|^{(q-2)/2}v)(s)\right\|_{L^2(\Omega)}
}\\
&\lesssim - \dfrac{\d}{\d s} \left( J(v(s)) - J(\phi) \right)
\ \mbox{ for a.e. } s \gg 1,
\end{align*}
which implies that
$$
\left\|\partial_s (|v|^{(q-2)/2}v)(s)\right\|_{L^2(\Omega)} \lesssim - \dfrac{\d}{\d s} \left( J(v(s)) - J(\phi) \right)^{1/2}
\ \mbox{ for a.e. } s \gg 1.
$$
Thus thanks to \eqref{p:ass} we obtain
\begin{align}
\lefteqn{
\left\| (|v|^{(q-2)/2}v)(s) - |\phi|^{(q-2)/2}\phi \right\|_{L^2(\Omega)} 
}\nonumber\\
&\leq 
\int^\infty_s \left\| \partial_s (|v|^{(q-2)/2}v)(s)\right\|_{L^2(\Omega)} \, \d s \nonumber\\
&\lesssim \left( J(v(s)) - J(\phi) \right)^{1/2} \stackrel{\eqref{p:ass}}\leq \sqrt{c} \e^{-\lambda s/2}\label{AA}
\end{align}
for $s \geq 0$ large enough. Moreover, there exists a constant $k_q > 0$ such that
\begin{align*}
\left| v(x,s)-\phi(x) \right|
&= \left| \beta\left( (|v|^{(q-2)/2}v)(x,s) \right) - \beta\left( |\phi|^{(q-2)/2}\phi(x) \right)\right|\\
&\leq k_q |\phi(x)|^{(2-q)/2} \left| (|v|^{(q-2)/2}v)(x,s) - |\phi|^{(q-2)/2}\phi(x) \right|,
\end{align*}
where $\beta(r) := |r|^{(2-q)/q}r$ for $r \in \R$, for a.e.~$x \in \Omega$ satisfying $\phi(x) \neq 0$.\footnote{Here we used the fundamental inequality: For $p \in (0,1)$, it holds that
$$
0 < \frac{|a|^{p-1}a - |b|^{p-1}b}{a-b} \leq 
2^{1-p} \left( |a|^{p-1} \wedge |b|^{p-1} \right) \ \mbox{ for all } \ a,b \in \R \setminus \{0\} \ \mbox{ with } \ a \neq b
$$
(see Appendix B of~\cite{A21}).
} Thus we observe that
\begin{align}
\MoveEqLeft{
 \int_\Omega |v(s)-\phi|^2 |\phi|^{q-2} \, \d x
}\nonumber\\
&\leq k_q^2 \int_{\Omega} \left| (|v|^{(q-2)/2}v)(s) - |\phi|^{(q-2)/2}\phi \right|^2 \, \d x\nonumber\\
&\stackrel{\eqref{AA}}\lesssim c \e^{-\lambda s} \label{E-ec}
\end{align}
for $s \geq 0$ large enough. Moreover, by Taylor's theorem (indeed, $w \mapsto \|w\|_{L^q(\Omega)}^q$ is at least of class $C^2$ in $H^1_0(\Omega)$), we derive that
\begin{align}
\MoveEqLeft
 J(v(s))-J(\phi)\\
&= \frac 1 2 \|\nabla v(s) - \nabla \phi\|_{L^2(\Omega)}^2 + \lambda_q \int_\Omega |\phi|^{q-2}\phi \left( v(s)-\phi \right) \, \d x\nonumber\\
&\quad - \frac{\lambda_q}{q} \|v(s)\|_{L^q(\Omega)}^q + \frac{\lambda_q}{q} \|\phi\|_{L^q(\Omega)}^q\nonumber\\
&= \frac 1 2 \|\nabla v(s) - \nabla \phi\|_{L^2(\Omega)}^2 - \frac{\lambda_q}2 (q-1)\int_\Omega |\phi|^{q-2} | v(s)-\phi |^2 \, \d x\nonumber\\
&\quad  + o\left( \|v(s)-\phi\|_{H^1_0(\Omega)}^2 \right).\label{AAA}
\end{align}
Thus combining the above with \eqref{p:ass} and \eqref{E-ec}, we conclude that
\begin{equation}\label{BBB}
\|\nabla v(s) - \nabla \phi\|_{L^2(\Omega)}^2 \lesssim c \e^{-\lambda s} \quad \mbox{ for } \ s \geq s_1.
\end{equation}
Since $v(s)$ is uniformly bounded in $H^1_0(\Omega)$ for $s \geq 0$, the desired conclusion follows. 
\end{proof}
}
\end{remark}

Now, we are in a position to prove main results.

\begin{proof}[Proof of Theorem {\rm \ref{T:sc-conv}}]
Thanks to Lemma \ref{L:JtoH10} we have
$$
\|v(s)-\phi\|_{H^1_0(\Omega)} \leq c \e^{-\lambda s} \quad \mbox{ for } \ s \geq 0
$$
for some constant $c > 0$. Thus it follows from \eqref{H-ineq} that
$$
\dfrac{\d H}{\d s}(s) + \frac{2\nu_k}{q-1} H(s)
\leq C c^\rho \e^{-\lambda \rho s} H(s) \quad \mbox{ for } \ s \geq s_1.
$$
Hence there exists a constant $M > 0$ such that
\begin{equation}\label{J-ec}
0 \leq H(s) \leq M H(s_1) \e^{-\lambda_0 (s - s_1)} \quad \mbox{ for } \ s \geq s_1,
\end{equation}
where $\lambda_0 = 2\nu_k/(q-1)$. Thus \eqref{J-conv} follows, since $v(s)$ is bounded in $H^1_0(\Omega)$ for $s \geq 0$. Furthermore, the assertion \eqref{H10-ec} follows from Lemma \ref{L:JtoH10}. This completes the proof of Theorem \ref{T:sc-conv}.
\end{proof}

\begin{proof}[Proof of Corollary {\rm \ref{C:stbl}}]
Suppose that $\phi$ is a nondegenerate least-energy solution to \eqref{eq:1.10}, \eqref{eq:1.11}. Thanks to~\cite[Theorem 2]{AK13}, for any $\vep > 0$ one can take $\delta > 0$ such that $\sup_{s \geq 0} \|v(s)-\phi\|_{H^1_0(\Omega)} < \vep$, where $v$ denotes the energy solution to \eqref{eq:1.6}--\eqref{eq:1.8} with an initial datum $v_0$, whenever $v_0 \in \mathcal{X}$ and $\|v_0-\phi\|_{H^1_0(\Omega)} < \delta$ (in particular, $c(v)$ given in \eqref{vLq} is uniformly bounded for the choice of $v_0 \in \mathcal{X}$ in the $\delta$-neighbourhood of $\phi$). Therefore we can take $s_1 = 0$, and consequently, there exists a constant $M \geq 0$ (independent of $v$ and $s$) such that
$$
0 \leq H(s) \leq M H(0) \e^{-\lambda_0s} \quad \mbox{ for } \ s \geq 0,
$$
which along with Lemma \ref{L:JtoH10} implies the desired conclusion of Corollary \ref{C:stbl}. 
\end{proof}

\section{An alternative proof with an $\vep$-regularization}\label{S:alt}

In the last section, in order to prove Theorem \ref{T:sc-conv}, we derived \eqref{conclu*} based on the spectral decomposition of $J'(v(s))$ in the associate space $\H_s'$ of the weighted $L^2$-space $\H_s$ (see \S \ref{Ss:wL2} and \S \ref{Ss:dual-decomp}). To this end, we paid a careful attention to the singularity of the weight function $|v(s)|^{2-q}$ of $\H_s'$ on the set $Z(s)$ of zeros of $v(s)$. In this section, instead of using the associate space $\H_s'$, we shall introduce an $\vep$-approximation for the singular weight and derive \eqref{conclu*} in another fashion. 

\subsection{A modified energy inequality with an $\vep$-regularization}

Let us recall the relation used in the last section,
\begin{align}\label{ut-rel-old}
\left\| \partial_s (|v|^{q-2}v)(s) \right\|_{\H_s'}^2
&= \frac{4(q-1)^2}{q^2} \left\| \partial_s (|v|^{(q-2)/2}v)(s)\right\|_{L^2(\Omega)}^2,
\end{align}
the left-hand side of which is now approximated as
$$
\left\langle
\partial_s (|v|^{q-2}v)(s),
(-\vep \Delta + |v(s)|^{q-2})^{-1} \partial_s (|v|^{q-2}v)(s)
\right\rangle_{H^1_0(\Omega)}
$$
for $\vep > 0$ (then $\H_s'$ will no longer appear in what follows). Here we note that 
$$
-\vep \Delta + |v(s)|^{q-2}
= (-\Delta) \circ \left( \vep I + A^s \right),
$$
which turns out to be a bijective and bounded operator from $H^1_0(\Omega)$ into $H^{-1}(\Omega)$ (see \S \ref{Ss:As}). It also follows that
\begin{align}
(-\vep \Delta + |v(s)|^{q-2})^{-1}
&= \left[ (-\Delta) \circ (\vep I + A^s) \right]^{-1}\nonumber\\
&= \left( \vep I + A^s \right)^{-1} \circ (-\Delta)^{-1}.\label{alt0.5}
\end{align}

We claim that
\begin{align}
\MoveEqLeft{
\left|
\left\langle
\partial_s (|v|^{q-2}v)(s),
(-\vep \Delta + |v(s)|^{q-2})^{-1} \partial_s (|v|^{q-2}v)(s)
\right\rangle_{H^1_0(\Omega)}
\right|
}\nonumber\\
&\leq \frac{4(q-1)^2}{q^2} \|\partial_s (|v|^{(q-2)/2}v)(s)\|_{L^2(\Omega)}^2\label{alt1.1}
\end{align}
for $\vep > 0$\/; it will be used below instead of \eqref{ut-rel-old}. Indeed, set $f = (-\vep \Delta + |v(s)|^{q-2})^{-1} \partial_s (|v|^{q-2}v)(s) \in H^1_0(\Omega)$. Then we see that
\begin{equation*}
- \vep \Delta f + |v(s)|^{q-2}f = \partial_s (|v|^{q-2}v)(s) \ \mbox{ in } H^{-1}(\Omega).
\end{equation*}
Hence testing it by $f$, we have
\begin{align*}
\MoveEqLeft{
\vep \|\nabla f\|_{L^2(\Omega)}^2 + \int_\Omega |v(s)|^{q-2}|f|^2 \, \d x
}\\
&= \left\langle \partial_s (|v|^{q-2}v)(s), f \right\rangle_{H^1_0(\Omega)}\\
&\hspace{-1.6mm}\stackrel{\eqref{time-deri-decomp}}= \frac{2(q-1)}q \left\langle |v(s)|^{(q-2)/2} \partial_s (|v|^{(q-2)/2}v)(s), f \right\rangle_{H^1_0(\Omega)}\\
&= \frac{2(q-1)}q \left( \partial_s (|v|^{(q-2)/2}v)(s), |v(s)|^{(q-2)/2} f \right)_{L^2(\Omega)}\\
&\leq \frac{2(q-1)}q \|\partial_s (|v|^{(q-2)/2}v)(s)\|_{L^2(\Omega)} \||v(s)|^{(q-2)/2} f\|_{L^2(\Omega)},
\end{align*}
whence it follows that
\begin{equation}\label{utestA}
\||v(s)|^{(q-2)/2} f\|_{L^2(\Omega)} \leq \frac{2(q-1)}q \|\partial_s (|v|^{(q-2)/2}v)(s)\|_{L^2(\Omega)}.
\end{equation}
Thus we obtain
\begin{align*}
\MoveEqLeft{
\left|
\left\langle
\partial_s (|v|^{q-2}v)(s),
(-\vep \Delta + |v(s)|^{q-2})^{-1} \partial_s (|v|^{q-2}v)(s)
\right\rangle_{H^1_0(\Omega)}
\right|
}\nonumber\\
&\stackrel{\eqref{time-deri-decomp}}= \frac{2(q-1)}q \left|
\left(
\partial_s (|v|^{(q-2)/2}v)(s), |v(s)|^{(q-2)/2} f 
\right)_{L^2(\Omega)}
\right|\nonumber\\
&\stackrel{\eqref{utestA}}\leq \frac{4(q-1)^2}{q^2} \left\|
\partial_s (|v|^{(q-2)/2}v)(s)
\right\|_{L^2(\Omega)}^2.
\end{align*}
Thus, we have proved \eqref{alt1.1}. On the other hand, using \eqref{alt0.5}, we observe that
\begin{align}
\MoveEqLeft{
\left\langle
\partial_s (|v|^{q-2}v)(s),
(-\vep \Delta + |v(s)|^{q-2})^{-1} \partial_s (|v|^{q-2}v)(s)
\right\rangle_{H^1_0(\Omega)}
}\nonumber\\
&= \left( (-\Delta)^{-1} \partial_s (|v|^{q-2}v)(s), 
(\vep I + A^s)^{-1} \circ (-\Delta)^{-1} \partial_s (|v|^{q-2}v)(s)\right)_{H^1_0(\Omega)}\nonumber\\
&= \left\| (\vep I + A^s)^{-1/2} \circ (-\Delta)^{-1} \partial_s (|v|^{q-2}v)(s) \right\|_{H^1_0(\Omega)}^2\nonumber\\
&= \left\| (\vep I + A^s)^{-1/2} \circ (-\Delta)^{-1} \circ J'(v(s)) \right\|_{H^1_0(\Omega)}^2,\label{alt2}
\end{align}
which may correspond to the $\H_s'$-norm of $J'(v(s))$ in the last section. Hence combining \eqref{alt1.1} and \eqref{alt2} along with \eqref{eneq}, we obtain the following modified energy inequality with the $\vep$-regularization:
\begin{align}
\MoveEqLeft{
\frac 1 {q-1}
\left\| (\vep I + A^s)^{-1/2} \circ (-\Delta)^{-1} \circ J'(v(s)) \right\|_{H^1_0(\Omega)}^2
}\nonumber\\
&\leq \frac{4(q-1)}{q^2} \left\|
\partial_s (|v|^{(q-2)/2}v)(s)
\right\|_{L^2(\Omega)}^2\nonumber\\
&\stackrel{\eqref{eneq}}\leq - \frac{\d}{\d s} J(v(s))\label{eneqe2}
\end{align}
for a.e.~$s > 0$ and $\vep \in (0,1)$ (cf.~see \eqref{eneq2}).

\subsection{Quantitative gradient inequality with the $\vep$-regularization} 

We next derive a gradient inequality which better fit the present setting. 

\begin{lemma}[Quantitative gradient inequality with the $\vep$-regularization]\label{L:GIe}
There exist constants $s_1 \geq 0$ and $C > 0$ such that
\begin{align}
0 &\leq J(v(s)) - J(\phi) \nonumber \\
&\leq \left(
\frac{\vep \mu_k^s + 1}{2\nu_k^s} 
+ C \|v(s)-\phi\|_{H^1_0(\Omega)}^\rho
\right) \nonumber\\
&\quad \times \left\|
(\vep I + A^s)^{-1/2} \circ (-\Delta)^{-1} \circ J'(v(s))
\right\|_{H^1_0(\Omega)}^2\label{GIe}
\end{align}
for all $s \geq s_1$ and $\vep \in (0,1)$. Here $\nu^s_k$ denotes the smallest positive eigenvalue of $\Ls$ and $\mu^s_k = \nu^s_k + \lambda_q(q-1)$ {\rm (}see Remark {\rm \ref{R:Ls-inv}}{\rm )} and $\rho = \min\{1,q-2\} \in (0,1]$. Moreover, the constant $C$ depends only on $q$, $C_q$, $c(v)$ given in \eqref{vLq} and $\|\Lphi^{-1}\|_{\mathscr{L}(H^{-1}(\Omega),H^1_0(\Omega))}$ {\rm (}see Lemma {\rm\ref{L:Lsinv-bdd}}{\rm )}.
\end{lemma}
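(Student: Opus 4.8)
The plan is to replay the proof of Lemma \ref{L:GI}, the only structural change being that the $\H_s'$-norm of $J'(v(s))$ is replaced throughout by the $\vep$-regularized quantity via the spectral machinery of \S\ref{Ss:dual-decomp}. All of what follows is for $s$ large, so that $\Ls^{-1}$ is well defined and the index $k$ is stabilized (see Remark \ref{R:Ls-inv}). First I would record how $(\vep I + A^s)^{-1/2}\circ(-\Delta)^{-1}$ acts on the eigenbasis: from $A^s e^s_j = (\mu^s_j)^{-1}e^s_j$ one gets $(\vep I + A^s)^{-1/2}e^s_j = \left(\mu^s_j/(\vep\mu^s_j+1)\right)^{1/2}e^s_j$, and by Lemma \ref{L:fs0=0} any $f\in\H_s'$ has zero extension $\bar f = \sum_{j\geq 1}\beta^s_j(-\Delta)e^s_j$ in $H^{-1}(\Omega)$ with $\beta^s_j = \langle\bar f,e^s_j\rangle_{H^1_0(\Omega)}$ and no $F^s_0$-component, so $(-\Delta)^{-1}\bar f = \sum_{j\geq 1}\beta^s_j e^s_j$ and, by the $H^1_0$-orthonormality of the $e^s_j$,
\begin{equation*}
\bigl\|(\vep I + A^s)^{-1/2}\circ(-\Delta)^{-1}\bar f\bigr\|_{H^1_0(\Omega)}^2 = \sum_{j=1}^\infty(\beta^s_j)^2\,\frac{\mu^s_j}{\vep\mu^s_j+1},
\end{equation*}
while \eqref{fLinvf_0} with $f^s_0=0$ gives $\langle\bar f,\Ls^{-1}\bar f\rangle_{H^1_0(\Omega)} = \sum_{j\geq 1}(\beta^s_j)^2\mu^s_j/\nu^s_j$.

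The key elementary observation is that $g(\mu):=(\vep\mu+1)/(\mu-\lambda_q(q-1))$ is strictly decreasing on $(\lambda_q(q-1),\infty)$, since $g'(\mu) = -(\vep\lambda_q(q-1)+1)/(\mu-\lambda_q(q-1))^2<0$. As $\mu^s_j$ is nondecreasing in $j$ and $\nu^s_j = \mu^s_j-\lambda_q(q-1)>0$ precisely for $j\geq k$, discarding the negative terms with $j<k$ and using $g(\mu^s_j)\leq g(\mu^s_k)$ for $j\geq k$ gives
\begin{equation*}
\langle\bar f,\Ls^{-1}\bar f\rangle_{H^1_0(\Omega)} \leq \sum_{j=k}^\infty(\beta^s_j)^2\frac{\mu^s_j}{\nu^s_j} \leq \frac{\vep\mu^s_k+1}{\nu^s_k}\sum_{j=k}^\infty(\beta^s_j)^2\frac{\mu^s_j}{\vep\mu^s_j+1} \leq \frac{\vep\mu^s_k+1}{\nu^s_k}\bigl\|(\vep I + A^s)^{-1/2}\circ(-\Delta)^{-1}\bar f\bigr\|_{H^1_0(\Omega)}^2,
\end{equation*}
which is the $\vep$-regularized counterpart of \eqref{fLinvf}.

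With this in hand I would reuse the computation of Lemma \ref{L:GI} essentially verbatim: combining the Taylor expansion \eqref{Tay1}, \eqref{Tay2} with the inversion identity \eqref{v-phi} and the bounds \eqref{e}, \eqref{err-est} yields, exactly as in \eqref{GI-1},
\begin{equation*}
J(v(s))-J(\phi) \leq \tfrac12\langle J'(v(s)),\Ls^{-1}J'(v(s))\rangle_{H^1_0(\Omega)} + C\bigl(\|v(s)-\phi\|_{H^1_0(\Omega)}^{\rho+2} + \|J'(v(s))\|_{H^{-1}(\Omega)}\|v(s)-\phi\|_{H^1_0(\Omega)}^{\rho+1}\bigr).
\end{equation*}
Since $J'(v(s))$ vanishes on $Z(s)$ and lies in $\H_s'$ (by \eqref{time-deri-decomp}), the spectral bound of the previous paragraph applies with $\bar f=J'(v(s))$ and dominates the leading term by $\tfrac12\frac{\vep\mu^s_k+1}{\nu^s_k}\bigl\|(\vep I+A^s)^{-1/2}\circ(-\Delta)^{-1}J'(v(s))\bigr\|_{H^1_0(\Omega)}^2$. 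To absorb the error terms I need a lower bound on the regularized quantity that is uniform in $\vep\in(0,1)$: by \eqref{conv-ev} the lowest eigenvalue obeys $\mu^s_1\geq\mu_1/2$ for $s$ large, hence $\vep\mu^s_j+1\leq(1+2/\mu_1)\mu^s_j$ for every $j$, so that
\begin{equation*}
\bigl\|(\vep I+A^s)^{-1/2}\circ(-\Delta)^{-1}J'(v(s))\bigr\|_{H^1_0(\Omega)}^2 \geq \frac{1}{1+2/\mu_1}\sum_{j\geq 1}(\beta^s_j)^2 = \frac{1}{1+2/\mu_1}\|J'(v(s))\|_{H^{-1}(\Omega)}^2.
\end{equation*}
Combined with the estimate $\|v(s)-\phi\|_{H^1_0(\Omega)}\leq C\|J'(v(s))\|_{H^{-1}(\Omega)}$ already obtained in the proof of Lemma \ref{L:GI} (see \eqref{GIp}), each error term is $\leq C\|v(s)-\phi\|_{H^1_0(\Omega)}^{\rho}\bigl\|(\vep I+A^s)^{-1/2}\circ(-\Delta)^{-1}J'(v(s))\bigr\|_{H^1_0(\Omega)}^2$; rearranging gives \eqref{GIe} for $s\geq s_1$ with $s_1$ large and all $\vep\in(0,1)$, and the nonnegativity $0\leq J(v(s))-J(\phi)$ follows from $s\mapsto J(v(s))$ being nonincreasing with limit $J(\phi)$ by \eqref{H10-conv}. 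I expect the main obstacle to be purely the bookkeeping of constants — pinning down the exact coefficient $(\vep\mu^s_k+1)/(2\nu^s_k)$ through the monotonicity of $g$ and securing the $\vep$-uniform lower bound on the regularized norm — while the nonlinear error analysis is identical to that of Lemma \ref{L:GI} and requires nothing new.
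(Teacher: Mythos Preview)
Your proof is correct and follows essentially the same approach as the paper: both reduce \eqref{GIe} to the estimate \eqref{GI-1} from Lemma~\ref{L:GI} together with a spectral bound expressing $\langle J'(v(s)),\Ls^{-1}J'(v(s))\rangle$ in terms of the regularized norm, and both control the error terms by bounding $\|J'(v(s))\|_{H^{-1}(\Omega)}$ from above by an $\vep$-uniform multiple of the regularized quantity. The only cosmetic difference is that the paper phrases the spectral step via the operator identity $(I-\lambda_q(q-1)A^s)^{-1}=(\vep I+A^s)^{-1/2}(\vep I+A^s)(I-\lambda_q(q-1)A^s)^{-1}(\vep I+A^s)^{-1/2}$ and the operator norm $\|(\vep I+A^s)^{1/2}\|_{\mathscr L(H^1_0)}=(\vep+\lambda^s_1)^{1/2}$, whereas you compute the same thing coefficient-by-coefficient in the eigenbasis and invoke the monotonicity of $g(\mu)=(\vep\mu+1)/(\mu-\lambda_q(q-1))$ explicitly; the content is identical.
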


\begin{proof}
Since $\Ls = (-\Delta) \circ [I - \lambda_q(q-1)A^s]$ is invertible for $s > 0$ large enough (see (i) of Remark \ref{R:Ls-inv}), so is $I - \lambda_q (q-1) A^s$. Hence as in \eqref{alt0.5}, we find that
$$
\Ls^{-1} = \left[ I - \lambda_q(q-1)A^s \right]^{-1} \circ (-\Delta)^{-1},
$$
and therefore, we observe that
\begin{align*}
\lefteqn{
 \langle J'(v(s)), \Ls^{-1} \circ J'(v(s)) \rangle_{H^1_0(\Omega)}
}\\
&= \langle J'(v(s)), \left[ I - \lambda_q(q-1)A^s \right]^{-1} \circ (-\Delta)^{-1} \circ J'(v(s)) \rangle_{H^1_0(\Omega)}\\
&= \left( (-\Delta)^{-1} \circ J'(v(s)), \left[ I - \lambda_q(q-1)A^s \right]^{-1} \circ (-\Delta)^{-1} \circ J'(v(s)) \right)_{H^1_0(\Omega)}.
\end{align*}
Now, $\vep I + A^s$ is positive and self-adjoint in the Hilbert space $H^1_0(\Omega)$, and moreover, it is commutative with 
$$
I - \lambda_q(q-1)A^s = \left[ 1 + \vep \lambda_q(q-1) \right] I - \lambda_q(q-1) (\vep I + A^s).
$$
Therefore noting that
\begin{align*}
\MoveEqLeft{
\left( I - \lambda_q(q-1)A^s \right)^{-1} 
}\\
&= (\vep I + A^s)^{-1/2} \circ (\vep I + A^s) \circ \left[ I - \lambda_q(q-1)A^s \right]^{-1} \circ (\vep I + A^s)^{-1/2}, 
\end{align*}
we obtain
\begin{align}
\lefteqn{
 \langle J'(v(s)), \Ls^{-1} \circ J'(v(s)) \rangle_{H^1_0(\Omega)}
}\nonumber\\
&= \big( (\vep I + A^s)^{-1/2} \circ (-\Delta)^{-1} \circ J'(v(s)),\nonumber\\
&\quad (\vep I + A^s) \circ \left[ I - \lambda_q(q-1)A^s \right]^{-1} \circ (\vep I + A^s)^{-1/2} \circ (-\Delta)^{-1} \circ J'(v(s)) \big)_{H^1_0(\Omega)}\nonumber\\
&\leq \frac{\vep \mu_k^s + 1}{\mu_k^s - \lambda_q (q-1)} \left\|
(\vep I + A^s)^{-1/2} \circ (-\Delta)^{-1} \circ J'(v(s))
\right\|_{H^1_0(\Omega)}^2.
\label{alt3}
\end{align}
Here we have used the spectral decomposition of $A^s$ in $H^1_0(\Omega)$ in the last line, which indeed yields, for any $f\in H^1_0(\Omega)$, 
\begin{align*}
& \left( f,  (\vep I + A^s) \circ \left[ I - \lambda_q(q-1)A^s \right]^{-1} f\right)_{H^1_0(\Omega)}\\
& = \sum_{j=1}^\infty \frac{\vep + \lambda_j^s}{1 - \lambda_q (q-1) \lambda_j^s} \alpha_j^s (f)^2\\
& \leq \frac{\vep  + \lambda_k^s}{1 - \lambda_q (q-1)\lambda_k^s}  \sum_{j\geq k}  \alpha_j^s (f)^2 \leq \frac{\vep \mu_k^s + 1}{\mu_k^s - \lambda_q (q-1)} \| f\|_{H^1_0(\Omega)}^2. 
\end{align*}
Here we have set $\alpha_j^s (f) = (f, e^s_j)_{H^1_0(\Omega)}$ and also used the relation $\lambda_k^s=1/\mu_k^s$. Moreover, we have
\begin{align*}
\MoveEqLeft{
\|J'(v(s))\|_{H^{-1}(\Omega)}
}\nonumber\\
&=\left\|(\vep I + A^s)^{1/2} \circ (\vep I + A^s)^{-1/2} \circ (-\Delta)^{-1} \circ J'(v(s)) \right\|_{H^1_0(\Omega)}
\\
&\leq \|(\vep I + A^s)^{1/2}\|_{\mathscr{L}(H^1_0(\Omega))}
\left\|(\vep I + A^s)^{-1/2} \circ (-\Delta)^{-1} \circ J'(v(s)) \right\|_{H^1_0(\Omega)}\\
&\leq (\vep + \lambda^s_1)^{1/2} \left\|(\vep I + A^s)^{-1/2} \circ (-\Delta)^{-1} \circ J'(v(s)) \right\|_{H^1_0(\Omega)}.
\end{align*}
Here we also note that $\lambda^s_1$ is bounded for $s > 0$ (see Lemma \ref{L:ev-var} along with the boundedness of $v(s)$ in $L^q(\Omega)$ for $s \geq 0$). Consequently, recalling \eqref{GI-1} and \eqref{GIp}, we can derive \eqref{GIe} from \eqref{alt3}.
\end{proof}

Therefore combining \eqref{eneqe2} and \eqref{GIe}, we infer that
\begin{align*}
0 &\leq J(v(s)) - J(\phi)
\nonumber\\
&\stackrel{\eqref{GIe}}\leq
\left(
\frac{\vep \mu_k^s + 1}{2\nu_k^s} 
+ C \|v(s)-\phi\|_{H^1_0(\Omega)}^\rho
\right) \nonumber\\
&\quad \times
\left\|
(\vep I + A^s)^{-1/2} \circ (-\Delta)^{-1} \circ J'(v(s))
\right\|_{H^1_0(\Omega)}^2\nonumber\\
&\stackrel{\eqref{eneqe2}}\leq
- \left(
\frac{\vep \mu_k^s + 1}{2\nu_k^s}
+ C \|v(s)-\phi\|_{H^1_0(\Omega)}^\rho
\right) (q-1) \dfrac{\d}{\d s} J(v(s))
\end{align*}
for a.e.~$s > 0$ large enough. Hence passing to the limit as $\vep \to 0_+$, we obtain \eqref{conclu*} again for a.e.~$s > 0$ large enough. The rest of proof runs as before (see \S \ref{Ss:SRC}).

\section{Optimality of the convergence rate}\label{S:opt}

In this section, we shall prove Theorem \ref{T:opt}, which is concerned with the optimality of the rate of convergence \eqref{H10-ec} and \eqref{H10-stbl} obtained in Theorem \ref{T:sc-conv} and Corollary \ref{C:stbl} for nondegenerate \emph{least-energy} asymptotic profiles. To this end, we shall employ a novel ``linearization'' for the rescaled equation \eqref{eq:1.6} around an equilibrium  $\phi$ (cf.~see~\cite{BF21,McCann23}) as well as the results obtained so far. Moreover, it will also play a crucial role in the next section.

\begin{proof}[Proof of Theorem {\rm \ref{T:opt}}]
Let $\phi$ be a nondegenerate \emph{least-energy} solution to \eqref{eq:1.10}, \eqref{eq:1.11}, i.e., 
$$
J(\phi) = \inf_{w\in\mathcal S} J(w), 
$$
where $\mathcal S$ stands for the set of all nontrivial weak solutions to \eqref{eq:1.10}, \eqref{eq:1.11}. Then $\phi$ is always sign-definite in $\Omega$. Moreover, the least positive eigenvalue of \eqref{Lphi-ep} is the second one $\nu_2 = \mu_2 - \lambda_q (q-1) > 0$, that is, $k = 2$ (see~\cite[Lemma 1]{Lin}). Let $\xi_\vep \in H^1_0(\Omega)$, $\vep > 0$ satisfy \eqref{opt-hyp}. We set
\begin{equation}\label{phi_vep}
u_{0,\vep} := \phi + \xi_\vep = \phi + \mathbb{P}_2(\xi_\vep) + \mathbb{P}_2^\perp(\xi_\vep),
\end{equation}
where $\mathbb{P}_2$ denotes the spectral projection associated with \eqref{Lphi-ep} onto the eigenspace $E_2$ corresponding to $\nu_2 > 0$ and $\mathbb{P}_2^\perp := I - \mathbb{P}_2$. Set
\begin{equation*}
 v_{0,\vep} := c_\vep u_{0,\vep} \in \mathcal{X}, \quad c_\vep := t_*(u_{0,\vep})^{-1/(q-2)} > 0.
\end{equation*}
Then we note that
\begin{equation}\label{v0_vep}
v_{0,\vep} = \phi + (c_\vep - 1) \phi + c_\vep \mathbb{P}_2(\xi_\vep) + c_\vep \mathbb{P}_2^\perp(\xi_\vep)
\end{equation}
and $\phi$ is a principal eigenfunction of \eqref{Lphi-ep}; hence we have $\phi \in E_2^\perp$. Since $u_{0,\vep} \to \phi$ strongly in $H^1_0(\Omega)$ as $\vep \to 0_+$ and $t_* : H^1_0(\Omega) \to [0,\infty)$ is continuous (see~\cite[Proposition 4]{AK13}), it follows that $t_*(u_{0,\vep}) \to t_*(\phi) = 1$ (hence, $c_\vep \to 1$) as $\vep \to 0_+$. Thus $v_{0,\vep} \to \phi$ strongly in $H^1_0(\Omega)$ as $\vep \to 0_+$. 

Since $v_{0,\vep} \in \mathcal{X}$ is close (in $H^1_0(\Omega)$) enough to (nondegenerate) $\phi$ for $\vep > 0$ small enough, thanks to the exponential stability result (see Corollary \ref{C:stbl}), the energy solution $v_\vep = v_\vep(x,s)$ to the Cauchy-Dirichlet problem \eqref{eq:1.6}--\eqref{eq:1.8} with the initial datum $v_0 = v_{0,\vep}$ exponentially converges to $\phi$, that is,
\begin{equation}\label{vep-exp-c}
\left\|v_\vep(s) - \phi\right\|_{H^1_0(\Omega)}^2 \leq C \left( J(v_{0,\vep}) - J(\phi) \right)\e^{-\lambda_0 s} \quad \mbox{ for } \ s \geq 0.
\end{equation}
In particular, we also note that $\sup_{s \geq 0} \|v_\vep(s)\|_{H^1_0(\Omega)}$ is uniformly bounded for $\vep \in (0,1)$. 

Then we can find out the rate of the convergence $c_\vep \to 1$.

\begin{lemma}\label{L:cep}
It holds that $c_\vep = 1 + O(\vep)$ as $\vep \to 0_+$.
\end{lemma}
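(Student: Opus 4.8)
The plan is to turn the smallness $\|\xi_\vep\|_{H^1_0(\Omega)}=O(\vep)$ into $|c_\vep-1|=O(\vep)$ by exploiting, on the one hand, that $v_{0,\vep}-\phi=:\eta_\vep=(c_\vep-1)\phi+c_\vep\xi_\vep$ (cf.~\eqref{v0_vep}) and that $v_{0,\vep}\in\mathcal X$ is close to $\phi$, so that $J(v_{0,\vep})\ge J(\phi)$ by the left inequality of \eqref{J-stbl} in Corollary~\ref{C:stbl} evaluated at $s=0$ (legitimate for $\vep$ small, since $v_{0,\vep}\to\phi$ in $H^1_0(\Omega)$ has already been shown); and, on the other hand, that for a nondegenerate least-energy profile the quadratic form $\langle\Lphi\cdot,\cdot\rangle_{H^1_0(\Omega)}$ is negative \emph{only} along the single direction $\phi$. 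These two facts are incompatible with a component of $\eta_\vep$ along $\phi$ larger than $O(\vep)$.

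First I would record the spectral picture at $\phi$. Since $\phi$ is a nondegenerate least-energy solution it is sign-definite, hence (strong maximum principle) $\phi\ne0$ a.e.~in $\Omega$, so the $H^1_0(\Omega)$-normalized eigenfunctions $\{e_j\}_{j\ge1}$ of \eqref{ep} form a complete orthonormal system of $H^1_0(\Omega)$ (cf.~\S\ref{Ss:As}). Moreover $-\Delta\phi=\lambda_q|\phi|^{q-2}\phi$, so $\phi$ is the principal eigenfunction of \eqref{Lphi-ep}; normalizing $e_1=\phi/\|\phi\|_{H^1_0(\Omega)}$ gives $\mu_1=\lambda_q$, $\nu_1=\mu_1-\lambda_q(q-1)=\lambda_q(2-q)<0$, whereas $\nu_j>0$ for all $j\ge2$ since $k=2$ (see~\cite{Lin}). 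With $\alpha_j:=(\eta_\vep,e_j)_{H^1_0(\Omega)}$ and $(\phi,e_j)_{H^1_0(\Omega)}=0$ for $j\ge2$, one has
\begin{align*}
\alpha_1 &= (c_\vep-1)\|\phi\|_{H^1_0(\Omega)} + c_\vep(\xi_\vep,e_1)_{H^1_0(\Omega)}, \\
\alpha_j &= c_\vep(\xi_\vep,e_j)_{H^1_0(\Omega)} \quad (j\ge2),
\end{align*}
so $\sum_{j\ge2}\alpha_j^2\le c_\vep^2\|\xi_\vep\|_{H^1_0(\Omega)}^2=O(\vep^2)$ by \eqref{opt-hyp}, while the first identity gives $\|\eta_\vep\|_{H^1_0(\Omega)}\le|\alpha_1|+2c_\vep\|\xi_\vep\|_{H^1_0(\Omega)}=|\alpha_1|+O(\vep)$ and $\|\eta_\vep\|_{H^1_0(\Omega)}\to0$ as $\vep\to0_+$ (recall $c_\vep\to1$).

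Next I would Taylor-expand the energy. Arguing as in Lemma~\ref{L:Taylor} with $\phi$ in place of $v(s)$, using $J'(\phi)=0$ and the eigenbasis expansion \eqref{Lu-ext} of $\Lphi$,
\begin{equation*}
0 \le J(v_{0,\vep})-J(\phi) = \frac12\sum_{j\ge1}\frac{\nu_j}{\mu_j}\,\alpha_j^2 + R_\vep, \qquad |R_\vep|\le C\|\eta_\vep\|_{H^1_0(\Omega)}^{2+\rho},
\end{equation*}
with $\rho=\min\{1,q-2\}\in(0,1]$. Since $0<\nu_j/\mu_j<1$ for $j\ge2$ and $\nu_1<0$, this forces
\begin{equation*}
\frac{|\nu_1|}{2\mu_1}\,\alpha_1^2 \le \frac12\sum_{j\ge2}\alpha_j^2 + |R_\vep| \le O(\vep^2) + C\big(2\alpha_1^2+O(\vep^2)\big)^{1+\rho/2},
\end{equation*}
and since $\big(2\alpha_1^2+O(\vep^2)\big)^{\rho/2}\to0$, the $\alpha_1^2\cdot o(1)$ contribution is absorbed on the left for $\vep$ small, yielding $\alpha_1^2=O(\vep^2)$. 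Finally, the identity for $\alpha_1$ together with $|(\xi_\vep,e_1)_{H^1_0(\Omega)}|\le\|\xi_\vep\|_{H^1_0(\Omega)}=O(\vep)$ gives $|c_\vep-1|\,\|\phi\|_{H^1_0(\Omega)}\le|\alpha_1|+c_\vep\|\xi_\vep\|_{H^1_0(\Omega)}=O(\vep)$, i.e.~$c_\vep=1+O(\vep)$.

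The one point needing care is the third step: one must be sure the Taylor remainder $R_\vep$ is of order strictly higher than $\|\eta_\vep\|^2$ — this is exactly the $O(\|\cdot\|_{L^q(\Omega)}^{2+\rho})$ estimate from Lemma~\ref{L:Taylor} combined with the Sobolev embedding — and that it can be absorbed using $\|\eta_\vep\|_{H^1_0(\Omega)}\to0$, which is available only because the continuity of $t_*$ (hence $c_\vep\to1$) has already been obtained. Heuristically, $\mathcal X=\{t_*=1\}$ is transverse at $\phi$ to the unique descent direction $\phi$ of $J$, so $J$ is coercive along $\mathcal X$ near $\phi$; the perturbation $\xi_\vep$ moves the base point by $O(\vep)$, and this coercivity transfers the rate to the component $c_\vep-1$ of $v_{0,\vep}-\phi$ along $\phi$.
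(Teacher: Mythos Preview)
Your argument is correct, but it takes a genuinely different route from the paper's own proof. The paper simply invokes an explicit two-sided estimate on the extinction time from~\cite[Corollary 1]{AK13},
\[
\lambda_q \frac{\|u_{0,\vep}\|_{L^q(\Omega)}^q}{\|\nabla u_{0,\vep}\|_{L^2(\Omega)}^2} \le t_*(u_{0,\vep}) \le \lambda_q \frac{\|\phi\|_{L^q(\Omega)}^2}{\|\nabla \phi\|_{L^2(\Omega)}^2}\|u_{0,\vep}\|_{L^q(\Omega)}^{q-2},
\]
notes that both bounds equal $1+O(\vep)$ since $\|\nabla\phi\|_{L^2}^2=\lambda_q\|\phi\|_{L^q}^q$, and concludes $c_\vep=t_*(u_{0,\vep})^{-1/(q-2)}=1+O(\vep)$ directly. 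Your approach instead exploits the variational structure: the constraint $v_{0,\vep}\in\mathcal X$ together with $\phi$ being least-energy forces $J(v_{0,\vep})\ge J(\phi)$, and since the Hessian of $J$ at $\phi$ is negative only along $e_1\parallel\phi$, the Taylor expansion pins the $e_1$-component of $v_{0,\vep}-\phi$ down to $O(\vep)$. The paper's proof is shorter and needs only elementary norm estimates, but relies on the external extinction-time bound; yours is longer but self-contained within the spectral and energy machinery already developed in \S\ref{S:sc-conv}, and makes transparent the geometric reason (transversality of $\mathcal X$ to the unstable direction of $J$ at $\phi$) behind the $O(\vep)$ rate.
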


\begin{proof}
Thanks to Corollary 1 of~\cite{AK13}, we have the following estimate:
$$
\lambda_q \frac{\|u_{0,\vep}\|_{L^q(\Omega)}^q}{\|\nabla u_{0,\vep}\|_{L^2(\Omega)}^2} \leq t_*(u_{0,\vep}) \leq \lambda_q \frac{\|\phi\|_{L^q(\Omega)}^2}{\|\nabla \phi\|_{L^2(\Omega)}^2}\|u_{0,\vep}\|_{L^q(\Omega)}^{q-2},
$$
which gives
\begin{align*}
t_*(u_{0,\vep}) &= \lambda_q \frac{\|\phi\|_{L^q(\Omega)}^q}{\|\nabla \phi\|_{L^2(\Omega)}^2} + O(\vep) = 1 + O(\vep) \quad \mbox{ as } \ \vep \to 0_+.
\end{align*}
Moreover, we have $c_\vep = t_*(u_{0,\vep})^{-1/(q-2)} = 1 + O(\vep)$ as $\vep \to 0_+$. This completes the proof.
\end{proof}

By subtraction, one derives from \eqref{eq:1.6}--\eqref{eq:1.8} and \eqref{eq:1.10}, \eqref{eq:1.11} that
$$
\partial_s \left( (|v|^{q-2}v)(s) - |\phi|^{q-2}\phi \right) - \Delta (v(s)-\phi) = \lambda_q \left( (|v|^{q-2}v)(s) - |\phi|^{q-2}\phi \right)
$$
in $H^{-1}(\Omega)$ for a.e.~$s > 0$. Applying $(-\Delta)^{-1}$ to both sides and setting
$$
w(s) := (-\Delta)^{-1}\left( (|v|^{q-2}v)(s) - |\phi|^{q-2}\phi\right),
$$
we have
$$
\partial_s w(s) + v(s)-\phi = \lambda_q w(s) \ \mbox{ in } H^1_0(\Omega)\ \mbox{ for a.e. } s > 0.
$$
Set
$$
w_2(s) := \mathbb{P}_2 (w(s)),
$$
which solves
$$
\partial_s w_2(s) + \mathbb{P}_2(v(s)-\phi) = \lambda_q w_2(s) \ \mbox{ in } H^1_0(\Omega) \ \mbox{ for a.e. } s > 0.
$$
Define $A : H^1_0(\Omega) \to H^1_0(\Omega)$ by $A(z) = (-\Delta)^{-1}(|\phi|^{q-2} z)$ for $z \in H^1_0(\Omega)$ (as in \S \ref{Ss:As}) and note the relation,
\begin{equation}\label{proj-rel}
 \mathbb{P}_2 = \mu_2 \mathbb{P}_2 \circ A \ \mbox{ in } H^1_0(\Omega)\/{\rm ;}
\end{equation}
indeed, $\mathbb{P}_2$ is a spectral projection of $A$ corresponding to the eigenvalue $\lambda_2 = 1/\mu_2$. Then we find that
\begin{align}
\mathbb{P}_2(v(s)-\phi) &= \mu_2 \mathbb{P}_2 \circ A(v(s)-\phi)\nonumber\\
&= \mu_2 \mathbb{P}_2 \circ (-\Delta)^{-1}\left(|\phi|^{q-2}(v(s)-\phi)\right) \nonumber\\
&= \frac{\mu_2}{q-1} \mathbb{P}_2 \left(w(s) - (-\Delta)^{-1} \mathcal{R}(v(s),\phi) \right) \nonumber\\
&= \frac{\mu_2}{q-1} \left[ w_2(s) - \mathbb{P}_2 \left((-\Delta)^{-1} \mathcal{R}(v(s),\phi) \right)\right],\label{w2-trans}
\end{align}
where $\mathcal{R}(\cdot,\phi)$ is given by \eqref{R}. Thus we infer that
\begin{equation}\label{wk-eq}
\partial_s w_2(s) + \frac{\nu_2}{q-1} w_2(s) = \frac{\mu_2}{q-1} \mathbb{P}_2 \left((-\Delta)^{-1} \mathcal{R}(v(s),\phi)\right)
\end{equation}
in $H^1_0(\Omega)$ for a.e.~$s > 0$. Hence we obtain the formula,
\begin{align}
 w_2(s) &= \e^{-\frac{\nu_2}{q-1}s} w_2(0)\nonumber\\
&\quad + \frac{\mu_2}{q-1} \int^s_0 \e^{-\frac{\nu_2}{q-1}(s-\sigma)} \mathbb{P}_2 \left((-\Delta)^{-1} \mathcal{R}(v(\sigma),\phi)\right) \, \d \sigma\label{wk-inteq}
\end{align}
in $H^1_0(\Omega)$ for $s \geq 0$. Here we note from \eqref{vep-exp-c} that
\begin{align*}
 \|\mathbb{P}_2 \left((-\Delta)^{-1} \mathcal{R}(v(\sigma),\phi)\right)\|_{H^1_0(\Omega)}
&\leq \left\|(-\Delta)^{-1} \mathcal{R}(v(\sigma),\phi) \right\|_{H^1_0(\Omega)}\\
&= \left\|\mathcal{R}(v(\sigma),\phi) \right\|_{H^{-1}(\Omega)}\\
&\leq C_q \|\mathcal{R}(v(\sigma),\phi)\|_{L^{q'}(\Omega)}\\
&\stackrel{\eqref{R-est}}\leq C \|v(\sigma)-\phi\|_{L^q(\Omega)}^{\rho+1}\\
&\stackrel{\eqref{H10-stbl}}\leq C \left( J(v_{0,\vep})-J(\phi)\right)^{\frac{\rho+1}2} \e^{-\frac{\nu_2}{q-1}(\rho+1)\sigma},
\end{align*}
where $\rho := \min\{q-2,1\} \in (0,1]$. Moreover, we see that
\begin{align*}
w_2(0) &= \mathbb{P}_2 \left( (-\Delta)^{-1} \left(|v_{0,\vep}|^{q-2}v_{0,\vep} - |\phi|^{q-2}\phi \right)\right)\\
&= \mathbb{P}_2 \left( (-\Delta)^{-1} \left[ \mathcal{R}(v_{0,\vep},\phi) + (q-1) |\phi|^{q-2} (v_{0,\vep}-\phi) \right] \right)\\
&\stackrel{\eqref{proj-rel}}= \frac{q-1}{\mu_2} \mathbb{P}_2 (v_{0,\vep}-\phi) + \mathbb{P}_2 \left( (-\Delta)^{-1} \mathcal{R}(v_{0,\vep},\phi)\right),
\end{align*}
which along with \eqref{v0_vep} yields
\begin{align*}
\|w_2(0)\|_{H^1_0(\Omega)} 
&\geq \frac{q-1}{\mu_2} c_\vep \|\mathbb{P}_2(\xi_\vep)\|_{H^1_0(\Omega)}
- \left\| \mathcal{R}(v_{0,\vep},\phi)\right\|_{H^{-1}(\Omega)}\\
&\geq \frac{q-1}{\mu_2} c_\vep \|\mathbb{P}_2(\xi_\vep)\|_{H^1_0(\Omega)} - C\|v_{0,\vep}-\phi\|_{L^q(\Omega)}^{\rho+1}\\
&\stackrel{\eqref{opt-hyp}}> \frac{q-1}{2\mu_2} c_\vep \|\mathbb{P}_2(\xi_\vep)\|_{H^1_0(\Omega)}
\end{align*}
for $\vep > 0$ small enough. Here we used the fact (see \eqref{opt-hyp}) that
$$
\liminf_{\vep \to 0} c_\vep \vep^{-1} \|\mathbb{P}_2(\xi_\vep)\|_{H^1_0(\Omega)} > 0 \quad \mbox{ and }\quad \|v_{0,\vep}-\phi\|_{L^q(\Omega)}^{\rho+1} \leq C \vep^{\rho+1}.
$$
Therefore we infer that
\begin{align*}
\lefteqn{
\|w_2(s)\|_{H^1_0(\Omega)}
}\\
&\geq \e^{-\frac{\nu_2}{q-1}s} \|w_2(0)\|_{H^1_0(\Omega)} \\
&\quad - \frac{\mu_2}{q-1} \int^s_0 \e^{-\frac{\nu_2}{q-1}(s-\sigma)} \left\|\mathbb{P}_2 \left((-\Delta)^{-1} \mathcal{R}(v(\sigma),\phi)\right)\right\|_{H^1_0(\Omega)} \, \d \sigma \\
&\geq \frac{q-1}{2\mu_2} c_\vep \|\mathbb{P}_2(\xi_\vep)\|_{H^1_0(\Omega)} \e^{-\frac{\nu_2}{q-1}s}\\
&\quad - \frac{\mu_2}{q-1} C \left( J(v_{0,\vep})-J(\phi)\right)^{\frac{\rho+1}2} \int^s_0 \e^{-\frac{\nu_2}{q-1}(s-\sigma)} \e^{-\frac{\nu_2}{q-1}(\rho+1)\sigma} \, \d \sigma \\
&= \e^{-\frac{\nu_2}{q-1}s} \left[
\frac{q-1}{2\mu_2} c_\vep \|\mathbb{P}_2(\xi_\vep)\|_{H^1_0(\Omega)} + \frac{\mu_2}{\nu_2 \rho} C\left( J(v_{0,\vep})-J(\phi)\right)^{\frac{\rho+1}2} \left( \e^{-\frac{\nu_2\rho}{q-1} s}- 1\right) 
\right]\\
&\stackrel{\eqref{opt-hyp}}\geq \frac{q-1}{4\mu_2} \e^{-\frac{\nu_2}{q-1}s} c_\vep \|\mathbb{P}_2(\xi_\vep)\|_{H^1_0(\Omega)}
\end{align*}
for $\vep > 0$ small enough. Here we used the fact that
\begin{align*}
\MoveEqLeft
 J(v_{0,\vep})-J(\phi)\\
&= \frac 1 2 \|\nabla v_{0,\vep} - \nabla \phi\|_{L^2(\Omega)}^2 + \lambda_q \int_\Omega |\phi|^{q-2}\phi \left( v_{0,\vep}-\phi \right) \, \d x\nonumber\\
&\quad - \frac{\lambda_q}{q} \|v_{0,\vep}\|_{L^q(\Omega)}^q + \frac{\lambda_q}{q} \|\phi\|_{L^q(\Omega)}^q\nonumber\\
&\leq \frac 1 2 \|\nabla v_{0,\vep} - \nabla \phi\|_{L^2(\Omega)}^2 
+ o\left( \|v_{0,\vep}-\phi\|_{H^1_0(\Omega)}^2 \right)
\leq C \vep^2.
\end{align*}
The last inequality above follows from \eqref{v0_vep} and Lemma \ref{L:cep}. Thus recalling that
\begin{align*}
 \|w_2(s)\|_{H^1_0(\Omega)} &\leq \|w(s)\|_{H^1_0(\Omega)}\\
&= \left\| (|v|^{q-2}v)(s) - |\phi|^{q-2}\phi \right\|_{H^{-1}(\Omega)}\\
&\leq \|\mathcal{R}(v(s),\phi)\|_{H^{-1}(\Omega)} + (q-1)\||\phi|^{q-2}(v-\phi)\|_{H^{-1}(\Omega)}\\
&\leq C \e^{-\frac{\nu_2}{q-1}(\rho+1)s} + C\left( \int_\Omega |v(s)-\phi|^2 |\phi|^{q-2} \, \d x \right)^{1/2},
\end{align*}
we conclude that \eqref{opt-est} holds, that is, the rate of convergence \eqref{H10-ec} (and \eqref{H10-stbl}) turns out to be optimal. Thus Theorem \ref{T:opt} has been proved.
\end{proof}

\section{Faster decay for well-prepared data}\label{S:faster}

In this section, in contrast with the last section, we shall construct an energy solution $v = v(x,s)$ to \eqref{eq:1.6}--\eqref{eq:1.8} which converges to a nondegenerate \emph{least-energy} solution $\phi = \phi(x)$ to \eqref{eq:1.10}, \eqref{eq:1.11} at a rate \emph{faster than the optimal one} as $s \to +\infty$ (cf.~see Theorem \ref{T:sc-conv}). To be more precise, we shall find an initial datum $v_0 \in \mathcal X \setminus \{\phi\}$ close to $\phi$ such that
$$
\left\| v(s) - \phi \right\|_{H^1_0(\Omega)} \lesssim \e^{-\frac{\nu_m}{q-1}s} \quad \mbox{ for } \ s \geq 0,
$$
where $v$ denotes the energy solution to \eqref{eq:1.6}--\eqref{eq:1.8} with the initial datum $v_0$ and $\nu_m$ ($> \nu_2$) denotes the second positive eigenvalue of \eqref{Lphi-ep}.

Let $\vep > 0$ be a number, which will be fixed later. Let $\eta, \eta^\perp \in H^1_0(\Omega)$ satisfy
\begin{align}\label{5:ini}
\eta \in E_2, \quad \eta^\perp \in E_2^\perp \setminus \{0\}, \quad
\|\eta+\eta^\perp\|_{H^1_0(\Omega)} \leq \vep
\end{align}
and set
$$
u_0 := \phi + \eta + \eta^\perp.
$$
Let $v:=v(x,s)$ denote the energy solution to \eqref{eq:1.6}--\eqref{eq:1.8} with the initial datum $v_0 := c_{0} u_0 \in \mathcal{X}$, where $c_{0} := t_*(u_0)^{-1/(q-2)}$, that is,
$$
v_0 = \phi + (c_{0} - 1) \phi + c_{0} \eta + c_{0} \eta^\perp.
$$
From the stability result (see Corollary \ref{C:stbl}), there exists a constant $C \geq 0$ such that
\begin{equation}\label{fa:1}
\|v(s) - \phi\|_{H^1_0(\Omega)} \leq C \left( J(v_0)-J(\phi) \right) \e^{-\frac{\nu_2}{q-1}s} \quad \mbox{ for } \ s \geq 0,
\end{equation}
whenever $\vep > 0$ is small enough. Here we remark that $v_0 \neq \phi$ (hence $v(\cdot) \not\equiv \phi$), since $\eta^\perp \neq 0$. 

Recalling \eqref{wk-inteq}, we find that
\begin{align}
 w_2(s) &= \e^{-\frac{\nu_2}{q-1}s} w_2(0)\nonumber\\
&\quad + \frac{\mu_2}{q-1} \int^s_0 \e^{-\frac{\nu_2}{q-1}(s-\sigma)} \mathbb{P}_2 \left((-\Delta)^{-1} \mathcal{R}(v(\sigma),\phi)\right) \, \d \sigma\nonumber\\
&= \e^{-\frac{\nu_2}{q-1}s} \left[ w_2(0) + \frac{\mu_2}{q-1} \int^\infty_0 \e^{\frac{\nu_2}{q-1}\sigma} \mathbb{P}_2 \left((-\Delta)^{-1} \mathcal{R}(v(\sigma),\phi)\right) \, \d \sigma \right]\nonumber\\
&\quad - \frac{\mu_2}{q-1} \int^\infty_s \e^{-\frac{\nu_2}{q-1}(s-\sigma)} \mathbb{P}_2 \left((-\Delta)^{-1} \mathcal{R}(v(\sigma),\phi)\right) \, \d \sigma.
\label{w2-rel0}
\end{align}
Here we note from \eqref{fa:1} that
\begin{align*}
\lefteqn{
\left\| \int^\infty_s \e^{-\frac{\nu_2}{q-1}(s-\sigma)} \mathbb{P}_2 \left((-\Delta)^{-1} \mathcal{R}(v(\sigma),\phi)\right) \, \d \sigma \right\|_{H^1_0(\Omega)}
}\\
&\leq \int^\infty_s \e^{-\frac{\nu_2}{q-1}(s-\sigma)} \left\| \mathcal{R}(v(\sigma),\phi) \right\|_{H^{-1}(\Omega)} \, \d \sigma\\
&\stackrel{\eqref{R-est}}
\leq C\int^\infty_s \e^{-\frac{\nu_2}{q-1}(s-\sigma)} \left\| v(\sigma)-\phi \right\|_{L^q(\Omega)}^{\rho+1} \, \d \sigma\\
&\stackrel{\eqref{fa:1}}\lesssim 
\e^{-\frac{\nu_2}{q-1}(\rho+1)s} \quad \mbox{ for } \ s \geq 0,
\end{align*}
which decays faster than $\e^{-\frac{\nu_2}{q-1}s}$ as $s \to +\infty$. We shall find an initial datum $v_0 \in \mathcal X$ for which the energy solution $v = v(x,s)$ to \eqref{eq:1.6}--\eqref{eq:1.8} satisfies
\begin{align}\label{w2-rel}
w_2(0) + \frac{\mu_2}{q-1} \int^\infty_0 \e^{\frac{\nu_2}{q-1}\sigma} \mathbb{P}_2 \left((-\Delta)^{-1} \mathcal{R}(v(\sigma),\phi)\right) \, \d \sigma = 0.
\end{align}
Moreover, we observe that
\begin{align*}
 w_2(0) &= \mathbb{P}_2 \left( (-\Delta)^{-1} \left( |v_0|^{q-2}v_0 - |\phi|^{q-2}\phi \right) \right)\\
&= \mathbb{P}_2 \left( (-\Delta)^{-1} \left[ (q-1)|\phi|^{q-2}(v_0 - \phi) + \mathcal{R}(v_0,\phi)\right] \right)\\
&\stackrel{\eqref{proj-rel}}= \frac{q-1}{\mu_2} \mathbb{P}_2 \left( v_0-\phi \right) + \mathbb{P}_2 \left( (-\Delta)^{-1} \mathcal{R}(v_0,\phi) \right).
\end{align*}
Hence \eqref{w2-rel} is rewritten as
\begin{align}\label{w2-rel2}
\eta &= \eta - \mathbb{P}_2(v_0-\phi) - \frac{\mu_2}{q-1} \mathbb{P}_2 \left( (-\Delta)^{-1} \mathcal{R}(v_0,\phi) \right)\nonumber\\
&\quad - \left(\frac{\mu_2}{q-1}\right)^2 \int^\infty_0 \e^{\frac{\nu_2}{q-1}\sigma} \mathbb{P}_2 \left((-\Delta)^{-1} \mathcal{R}(v(\sigma),\phi)\right) \, \d \sigma \nonumber\\
&=: \Psi(\eta\,;\eta^\perp).
\end{align}
Here we recall that $v_0$ and $v$ are given as in the beginning of this section.
We first claim that $\Psi(\,\cdot\,;\eta^\perp)$ is a self-mapping on $\overline{B}_{\vep/2} := \{\eta \in E_2 \colon \|\eta\|_{H^1_0(\Omega)} \leq \vep/2\}$ for each $\eta^\perp \in E_2^\perp$ satisfying $\|\eta^\perp\|_{H^1_0(\Omega)} \leq \vep/2$ for $\vep > 0$ small enough. As in Lemma \ref{L:cep}, we see that
$$
c_{0} = t_*(\phi + \eta + \eta^\perp)^{-1/(q-2)} = 1 + O(\vep) \quad \mbox{ as } \ \vep \to 0_+
$$
\emph{uniformly} for $\eta \in E_2$ and $\eta^\perp \in E_2^\perp$ satisfying $\|\eta + \eta^\perp\|_{H^1_0(\Omega)} \leq \vep$. Thus one can take $\vep_0 > 0$ small enough that, for each $\vep \in (0,\vep_0)$,
\begin{align*}
\|\Psi(\eta;\eta^\perp)\|_{H^1_0(\Omega)}
&\leq |1 - c_{0}| \|\eta\|_{H^1_0(\Omega}
+ \frac{\mu_2}{q-1} \left\| \mathcal{R}(v_0,\phi) \right\|_{H^{-1}(\Omega)}\\
&\quad + \left(\frac{\mu_2}{q-1}\right)^2 \int^\infty_0 \e^{\frac{\nu_2}{q-1}\sigma} \left\|\mathcal{R}(v(\sigma),\phi)\right\|_{H^{-1}(\Omega)} \, \d \sigma\\
&\leq |1 - c_{0}| \|\eta\|_{H^1_0(\Omega} + O(\vep^{\rho+1})
\leq \frac{\vep}2
\end{align*}
for $\eta \in \overline{B}_{\vep/2}$ and $\eta^\perp \in E_2^\perp$ satisfying $\|\eta^\perp\|_{H^1_0(\Omega)} \leq \vep/2$. 
Now, we fix such an $\vep \in (0,\vep_0)$ (by taking account of \eqref{fa:1} as well) and an arbitrary $\eta^\perp \in E_2^\perp \setminus \{0\}$ satisfying $\|\eta^\perp\|_{H^1_0(\Omega)} \leq \vep/2$. We then claim that $\Psi(\,\cdot\,;\eta^\perp)$ is continuous in $\overline{B}_{\vep/2}$. Indeed, let $(\eta_n)$ be a sequence in $\overline{B}_{\vep/2}$ such that $\eta_n \to \eta$ for some $\eta \in \overline{B}_{\vep/2}$. Setting $u_{0,n} := \phi + \eta_n + \eta^\perp$ (and recalling $u_0 := \phi + \eta + \eta^\perp$), we first observe that $v_{0,n} := c_{0,n} u_{0,n} \to v_0 := c_{0} u_0$ strongly in $H^1_0(\Omega)$. Here $c_{0,n}$ and $c_{0}$ are defined for $u_{0,n}$ and $u_0$, respectively, and $c_{0,n} \to c_{0}$ from the continuity of $t_*(\cdot)$ in $H^1_0(\Omega)$ (see~\cite{AK13}). Hence it suffices to verify the continuity of the map $\Phi : \overline{B}_{\vep/2} \to \overline{B}_{\vep/2}$ given by
$$
\Phi(\eta) := \int^\infty_0 \e^{\frac{\nu_2}{q-1}\sigma} \mathbb{P}_2 \left((-\Delta)^{-1} \mathcal{R}(v(\sigma),\phi)\right) \, \d \sigma \quad \mbox{ for } \ \eta \in \overline{B}_{\vep/2}.
$$
Actually, we observe that
\begin{align*}
\lefteqn{
 \|\Phi(\eta_n) - \Phi(\eta)\|_{H^1_0(\Omega)}
}\\
&\leq \int^\infty_0 \e^{\frac{\nu_2}{q-1}\sigma} \left\| \mathcal{R}(v(\sigma),\phi) - \mathcal{R}(v_n(\sigma),\phi) \right\|_{H^{-1}(\Omega)}\, \d \sigma\\
&= \int^S_0 \e^{\frac{\nu_2}{q-1}\sigma} \left\| \mathcal{R}(v(\sigma),\phi) - \mathcal{R}(v_n(\sigma),\phi) \right\|_{H^{^1}(\Omega)}\, \d \sigma\\
&\quad + \int^\infty_S \e^{\frac{\nu_2}{q-1}\sigma} \left\| \mathcal{R}(v(\sigma),\phi) - \mathcal{R}(v_n(\sigma),\phi) \right\|_{H^{-1}(\Omega)}\, \d \sigma,
\end{align*}
where $v_n = v_n(x,s)$ denotes the energy solution to \eqref{eq:1.6}--\eqref{eq:1.8} for the initial datum $v_{0,n}$, for $S > 0$. For any $\nu>0$, one can take $S_\nu > 0$ large enough that
\begin{align*}
\MoveEqLeft{
\int^\infty_{S_\nu} \e^{\frac{\nu_2}{q-1}\sigma} \left\| \mathcal{R}(v(\sigma),\phi) - \mathcal{R}(v_n(\sigma),\phi) \right\|_{H^{-1}(\Omega)}\, \d \sigma
}\\
&\lesssim \int^\infty_{S_\nu} \e^{\frac{\nu_2}{q-1}\sigma} \e^{-\frac{\nu_2}{q-1}(\rho+1)\sigma} \, \d \sigma
< \frac{\nu}2.
\end{align*}
Moreover, due to the continuous dependence of energy solutions to \eqref{eq:1.6}--\eqref{eq:1.8} on initial data, we can take $N_\nu \in \N$ such that
\begin{align*}
\lefteqn{
\int^{S_\nu}_0 \e^{\frac{\nu_2}{q-1}\sigma} \left\| \mathcal{R}(v(\sigma),\phi) - \mathcal{R}(v_n(\sigma),\phi) \right\|_{H^{-1}(\Omega)}\, \d \sigma
}\\
&\stackrel{\eqref{R}}\leq \int^{S_\nu}_0 \e^{\frac{\nu_2}{q-1}\sigma} \left\| (|v|^{q-2}v)(\sigma) - (|v_n|^{q-2}v_n)(\sigma) \right\|_{H^{-1}(\Omega)}\, \d \sigma\\
&\quad + (q-1)C_q \int^{S_\nu}_0 \e^{\frac{\nu_2}{q-1}\sigma} \|\phi\|_{L^q(\Omega)}^{q-2} \left\| v(\sigma) - v_n(\sigma) \right\|_{L^q(\Omega)}\, \d \sigma
< \frac\nu 2
\end{align*}
for $n \geq N_\nu$. Thus $\Phi$ is continuous in $\overline{B}_{\vep/2}$, and so is $\Psi(\,\cdot\,;\eta^\perp)$. Combining all these facts and employing Brower's fixed point theorem, we conclude that there exists $\eta_* \in \overline{B}_{\vep/2}$ such that $\Psi(\eta_*\,;\eta^\perp) = \eta_*$. Thus we have proved that
\begin{equation}\label{w2-decay}
\|w_2(s)\|_{H^1_0(\Omega)} \lesssim \e^{-\frac{\nu_2}{q-1}(\rho+1)s}
\end{equation}
for such well-prepared initial data $u_0 = \phi + \eta_* + \eta^\perp$ (i.e., $v_0 = t_*(u_0)^{-1/(q-2)}u_0$). From \eqref{w2-trans} and \eqref{w2-decay}, we note that
\begin{align}
 \|\mathbb{P}_2(v(s)-\phi)\|_{H^1_0(\Omega)}
 &\leq \frac{\mu_2}{q-1} \|w_2(s)\|_{H^1_0(\Omega)}
+ \frac{\mu_2}{q-1} \left\| \mathcal{R}(v(s),\phi)\right\|_{H^{-1}(\Omega)}\nonumber\\
 &\lesssim \e^{-\frac{\nu_2}{q-1}(\rho+1)s}
\quad \mbox{ for } \ s \geq 0.\label{wj-decay}
\end{align}

Now, let us go back to the proof of Theorem \ref{T:sc-conv}. In particular, we recall the key identity (see \eqref{fLinvf_0} and \eqref{fLinvf}),
$$
\langle \bar f, \Ls^{-1} \bar f \rangle_{H^1_0(\Omega)}
= \sum_{j=1}^\infty (\beta^s_j)^2 \frac{\mu_j^s}{\mu_j^s - \lambda_q (q-1)},
\quad \beta^s_j = \langle \bar f, e^s_j \rangle_{H^1_0(\Omega)},
$$
where $\bar f$ is the zero extension of $f$ onto $\Omega$, for $f \in \H_s'$. Let $\nu_m = \mu_m - \lambda_q (q-1) > \nu_2$ be the second positive eigenvalue of \eqref{Lphi-ep}, that is, $\nu_2 = \cdots = \nu_{m-1} < \nu_m$ (hence $m > 2$). Here we derive instead of \eqref{fLinvf} that 
\begin{align}
\lefteqn{
\langle \bar f, \Ls^{-1} \bar f \rangle_{H^1_0(\Omega)}
}\nonumber\\
&\leq \sum_{j=2}^\infty (\beta^s_j)^2 \frac{\mu_j^s}{\mu_j^s - \lambda_q (q-1)}\nonumber\\
&= \sum_{j=m}^\infty (\beta^s_j)^2 \frac{\mu_j^s}{\mu_j^s - \lambda_q (q-1)} + \sum_{j=2}^{m-1} (\beta^s_j)^2 \frac{\mu_2^s}{\mu_2^s - \lambda_q (q-1)} \nonumber\\
&\leq \frac{1}{\mu_m^s - \lambda_q (q-1)} \sum_{j=m}^\infty (\beta^s_j)^2 \mu_j^s + \sum_{j=2}^{m-1} (\beta^s_j)^2 \frac{\mu_2^s}{\mu_2^s - \lambda_q (q-1)} \nonumber\\
&= \frac{1}{\nu_m^s} \sum_{j=2}^\infty (\beta^s_j)^2 \mu_j^s - \frac{1}{\mu_m^s - \lambda_q (q-1)} \sum_{j=2}^{m-1} (\beta^s_j)^2 \mu_2^s \nonumber \\
&\quad + \frac{1}{\mu_2^s - \lambda_q (q-1)} \sum_{j=2}^{m-1} (\beta^s_j)^2 \mu_2^s\nonumber\\
&\leq \frac{1}{\nu_m^s} \sum_{j=2}^\infty \|f\|_{\H_s'}^2 + r_2(s),\label{fLinvf_mod}
\end{align}
where $r_2(s)$ is given by
$$
r_2(s) := \left( - \frac{1}{\mu_m^s - \lambda_q (q-1)} + \frac{1}{\mu_2^s - \lambda_q (q-1)} \right) \sum_{j=2}^{m-1} (\beta^s_j)^2 \mu_2^s.
$$
Then we observe that
$$
|r_2(s)| \leq C \sum_{j=2}^{m-1} (\beta^s_j)^2 = C \|(\mathbb{P}^s_2)^*(\bar f)\|_{H^{-1}(\Omega)}^2,
$$
where $(\mathbb{P}_2^s)^* : H^{-1}(\Omega) \to H^{-1}(\Omega)$ stands for the adjoint operator of the spectral projection $\mathbb{P}_2^s : H^1_0(\Omega) \to H^1_0(\Omega)$ of $A^s$ corresponding to the least eigenvalue $\mu_2^s > \lambda_q(q-1)$ of \eqref{ep}, that is,
$$
(\mathbb{P}^s_2)^*(f) = \sum_{j=2}^{m-1} \langle f, e^s_j \rangle_{H^1_0(\Omega)}( -\Delta e^s_j) \quad \mbox{ for } \ f \in H^{-1}(\Omega).
$$
We also note that $(\mathbb{P}^s_2)^* \circ (-\Delta) = (-\Delta) \circ \mathbb{P}^s_2$. \prf{Indeed, by definition, we have $\langle (\mathbb{P}^s_2)^* f, u \rangle_{H^1_0(\Omega)} = \langle f, \mathbb{P}^s_2 u \rangle_{H^1_0(\Omega)}$ for $u \in H^1_0(\Omega)$ and $f \in H^{-1}(\Omega)$. Substitute $f = -\Delta v$ for $v \in H^1_0(\Omega)$. We observe that $\langle (\mathbb{P}^s_2)^* \circ (-\Delta) v, u \rangle_{H^1_0(\Omega)} = \langle -\Delta v, \mathbb{P}^s_2 u \rangle_{H^1_0(\Omega)} = (v,\mathbb{P}^s_2 u)_{H^1_0(\Omega)} = (\mathbb{P}^s_2 v, u)_{H^1_0(\Omega)} = \langle (-\Delta) \circ \mathbb{P}^s_2 v, u\rangle_{H^1_0(\Omega)}$. Thus the relation follows from the arbitrariness of $u,v \in H^1_0(\Omega)$.} Substitute $f = J'(v(s)) \in \H_s'$ and note that
\begin{align*}
 \|J'(v(s)) - \Ls (v(s)-\phi)\|_{L^{q'}(\Omega)}
\leq C \|v(s)-\phi\|_{L^q(\Omega)}^{\rho+1} \quad \mbox{ for } \ s \geq 0
\end{align*}
(as in the proof of Lemma \eqref{L:Taylor}). Since $(\mathbb{P}^s_2)^* \circ \Ls = (\mathbb{P}^s_2)^* \circ (-\Delta) \circ [I - \lambda_q(q-1) A^s] = (-\Delta) \circ \mathbb{P}^s_2 \circ [I - \lambda_q (q-1)A^s] = \Ls \circ \mathbb{P}^s_2$, it then follows from \eqref{H10-ec} and \eqref{wj-decay} that
\begin{align*}
|r_2(s)| 
&\leq C \|(\mathbb{P}^s_2)^*(J'(v(s))\|_{H^{-1}(\Omega)}^2\\
&\leq C \|(\mathbb{P}^s_2)^* \circ \Ls (v(s)-\phi)\|_{H^{-1}(\Omega)}^2 + C \|v(s)-\phi\|_{L^q(\Omega)}^{\rho+1}\\
&= C \|\Ls \circ \mathbb{P}^s_2 (v(s)-\phi)\|_{H^{-1}(\Omega)}^2 + C \|v(s)-\phi\|_{L^q(\Omega)}^{\rho+1}\\
&\leq C \|\Ls\|_{\mathscr{L}(H^1_0(\Omega),H^{-1}(\Omega))}^2 \|\mathbb{P}^s_2 (v(s)-\phi)\|_{H^1_0(\Omega)}^2 + C \|v(s)-\phi\|_{L^q(\Omega)}^{\rho+1}\\
&\leq C \|\Ls\|_{\mathscr{L}(H^1_0(\Omega),H^{-1}(\Omega))}^2 \|\mathbb{P}_2 (v(s)-\phi)\|_{H^1_0(\Omega)}^2 + C \|v(s)-\phi\|_{L^q(\Omega)}^{\rho+1}\\
&\lesssim \e^{-\frac{2\nu_2}{q-1}(\rho+1)s}.
\end{align*}
Hence as in \eqref{GI} we can obtain
\begin{align}
\lefteqn{
 J(v(s)) - J(\phi)
}\nonumber\\
&\leq \left( \frac 1 {2 \nu_m^s} + C \|v(s)-\phi\|_{H^1_0(\Omega)}^\rho \right) \| J'(v(s)) \|_{\H_s'}^2 + C \e^{-\frac{2\nu_2}{q-1}(\rho+1)s}
\label{GI2}
\end{align}
for all $s \geq s_1$ large enough and some constant $C \geq 0$ independent of $s$. Thus one can verify that
\begin{align}
\dfrac{\d H}{\d s}(s) + \frac{2\nu_m}{q-1} H(s)
&\leq C \|v(s)-\phi\|_{H^1_0(\Omega)}^\rho H(s) + C \e^{-\frac{2\nu_2}{q-1}(\rho+1)s}\nonumber\\
&\leq C \e^{-\frac{2\nu_2}{q-1}\left(\frac\rho2+1\right) s},
\label{H-ineq-2}
\end{align}
where $H(s) := J(v(s))-J(\phi)$, for $s \geq s_1$. In case $\nu_2\left(\rho/2+1\right) > \nu_m$, we immediately obtain
$$
H(s) \leq C \e^{-\frac{2\nu_m}{q-1}s},
$$
which together with Lemma \ref{L:coer} implies the desired conclusion. In case $\nu_2\left(\rho/2+1\right) < \nu_m$, it follows that
$$
H(s) \leq C \left( \e^{-\frac{2\nu_m}{q-1}s} + \e^{-\frac{2\nu_2}{q-1}\left(\frac\rho2+1\right)s} \right) \quad \mbox{ for } \ s \geq 0.
$$
Due to Lemma \ref{L:coer}, we obtain
$$
\|v(s)-\phi\|_{H^1_0(\Omega)} \leq C \left( \e^{-\frac{\nu_m}{q-1}s} + \e^{-\frac{\nu_2}{q-1}\left(\frac\rho2+1\right) s} \right) \quad \mbox{ for } \ s \geq 0.
$$
Using the above improved decay estimate instead of the original one (see \eqref{H10-ec}) and repeating the argument so far, we can improve the estimates for $w_2$ and then obtain
$$
\|v(s)-\phi\|_{H^1_0(\Omega)} \leq C \left( \e^{-\frac{\nu_m}{q-1}s} + \e^{-\frac{\nu_2}{q-1}\left(\frac\rho2+1\right)^2s} \right) \quad \mbox{ for } \ s \geq 0.
$$
Hence repeating this procedure (in finite time) and noting that $\rho > 0$, we conclude that
\begin{equation}\label{conclu}
\|v(s)-\phi\|_{H^1_0(\Omega)} \leq C \e^{-\frac{\nu_m}{q-1}s} \quad \mbox{ for } \ s \geq 0.
\end{equation} 
In case $\nu_2\left(\rho/2+1\right) = \nu_m$, we can derive that
$$
H(s) \leq C \e^{-\frac{2\nu}{q-1}s}
$$
for any $\nu_2 < \nu < \nu_m$. Hence we can also obtain \eqref{conclu} as in the last case. Thus we have obtained

\begin{theorem}[Faster decay for well-prepared initial data]\label{T:faster}
Let $\Omega$ be any bounded domain of $\R^N$ with boundary $\partial \Omega$. Assume \eqref{hypo} and let $\phi$ be a nondegenerate \emph{least-energy} solution to \eqref{eq:1.10}, \eqref{eq:1.11}. Let $\nu_m > \nu_2$ be the second positive eigenvalue of \eqref{Lphi-ep}. Then there exists $v_0 \in \mathcal{X} \setminus \{\phi\}$ such that
$$
\|v(s)-\phi\|_{H^1_0(\Omega)} \leq C \e^{-\frac{\nu_m}{q-1}s} \quad \mbox{ for } \ s \geq 0,
$$
where $v = v(x,s)$ is the energy solution to \eqref{eq:1.6}--\eqref{eq:1.8} with the initial datum $v_0$.
\end{theorem}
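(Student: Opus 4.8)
The plan is to construct a small perturbation of $\phi$ inside the phase set $\mathcal{X}$, tuned so that the slowest ($\nu_2$-)mode of the linearization is cancelled, and then rerun the energy/gradient-inequality machinery of Section \ref{S:sc-conv} with $\nu_2$ replaced by the next positive eigenvalue $\nu_m$. Concretely, I would take $u_0 = \phi + \eta + \eta^\perp$ with $\eta \in E_2$ and $\eta^\perp \in E_2^\perp \setminus \{0\}$ both of size $O(\vep)$, and set $v_0 := t_*(u_0)^{-1/(q-2)} u_0 \in \mathcal{X}$; then $v_0 \neq \phi$ precisely because $\eta^\perp \neq 0$. By Corollary \ref{C:stbl}, the rescaled solution $v$ with $v(0) = v_0$ satisfies $\|v(s)-\phi\|_{H^1_0(\Omega)}^2 \lesssim (J(v_0)-J(\phi))\e^{-\lambda_0 s}$. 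Following the computation of Section \ref{S:opt}, the projected quantity $w_2(s) = \mathbb{P}_2 (-\Delta)^{-1}\big((|v|^{q-2}v)(s) - |\phi|^{q-2}\phi\big)$ solves the linear ODE \eqref{wk-eq}, hence the Duhamel formula \eqref{wk-inteq}, which rearranges into \eqref{w2-rel0}. The coefficient multiplying $\e^{-\frac{\nu_2}{q-1}s}$ there is $w_2(0) + \frac{\mu_2}{q-1}\int_0^\infty \e^{\frac{\nu_2}{q-1}\sigma}\mathbb{P}_2 (-\Delta)^{-1}\mathcal{R}(v(\sigma),\phi)\,\d\sigma$; demanding that it vanish, and using $w_2(0) = \frac{q-1}{\mu_2}\mathbb{P}_2(v_0-\phi) + \mathbb{P}_2 (-\Delta)^{-1}\mathcal{R}(v_0,\phi)$, turns the condition into the fixed-point equation $\eta = \Psi(\eta;\eta^\perp)$ on the closed ball $\overline{B}_{\vep/2} \subset E_2$, as in \eqref{w2-rel2}.

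Next I would verify that for $\vep$ small $\Psi(\,\cdot\,;\eta^\perp)$ is a continuous self-map of $\overline{B}_{\vep/2}$. The self-map property rests on $|1-t_*(u_0)^{-1/(q-2)}| = O(\vep)$ uniformly in $\eta,\eta^\perp$ (via Corollary 1 of \cite{AK13}) together with $\|\mathcal{R}(v(\sigma),\phi)\|_{H^{-1}(\Omega)} \lesssim \|v(\sigma)-\phi\|_{L^q(\Omega)}^{\rho+1} \lesssim \e^{-\frac{\nu_2}{q-1}(\rho+1)\sigma}$ from Corollary \ref{C:stbl}, which renders the Duhamel integral term $O(\vep^{\rho+1})$; continuity follows from continuous dependence of energy solutions on their data plus the exponential tail bound on the integrand. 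Since $E_2$ is finite dimensional, $\overline{B}_{\vep/2}$ is compact and convex, so Brouwer's fixed point theorem yields $\eta_*$ with $\Psi(\eta_*;\eta^\perp) = \eta_*$. For this $v_0$, the function $w_2(s)$, and hence $\mathbb{P}_2(v(s)-\phi)$ by \eqref{w2-trans}, decays like $\e^{-\frac{\nu_2}{q-1}(\rho+1)s}$, strictly faster than $\e^{-\frac{\nu_2}{q-1}s}$.

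Finally I would rerun the quantitative gradient inequality of Section \ref{Ss:GI}, splitting the spectral sum for $\langle \bar f, \Ls^{-1}\bar f\rangle$ at index $m$ rather than at $k=2$; the discarded terms $j = 2,\dots,m-1$ leave a remainder $r_2(s)$ dominated by $\|(\mathbb{P}_2^s)^* J'(v(s))\|_{H^{-1}(\Omega)}^2$, which — using $(\mathbb{P}_2^s)^*\circ\Ls = \Ls\circ\mathbb{P}_2^s$ and the fast decay of $\mathbb{P}_2(v(s)-\phi)$ — is $\lesssim \e^{-\frac{2\nu_2}{q-1}(\rho+1)s}$. This gives \eqref{GI2} with $\nu_m^s$ in place of $\nu_k^s$, and combined with the energy inequality \eqref{eneq2} it yields the differential inequality \eqref{H-ineq-2} for $H(s) = J(v(s))-J(\phi)$. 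Integrating, and — when $\nu_2(\rho/2+1) < \nu_m$ — bootstrapping finitely many times (each improved decay of $\|v(s)-\phi\|_{H^1_0(\Omega)}$ sharpens the forcing term, and the procedure terminates since $\rho>0$), I would obtain $H(s) \lesssim \e^{-\frac{2\nu_m}{q-1}s}$; the coercivity estimate of Lemma \ref{L:coer} then upgrades this to $\|v(s)-\phi\|_{H^1_0(\Omega)} \lesssim \e^{-\frac{\nu_m}{q-1}s}$. I expect the fixed-point step to be the main obstacle: one must secure the self-map and continuity estimates uniformly in the perturbation while keeping the smallness parameter consistent across the stability bound, the Duhamel tail, and the bootstrap, and one must keep $\eta^\perp$ nonzero so that the constructed $v_0$ is genuinely distinct from $\phi$.
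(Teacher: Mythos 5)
Your proposal reproduces the paper's proof in its entirety: the same ansatz $u_0=\phi+\eta+\eta^\perp$ with $\eta\in E_2$, $\eta^\perp\in E_2^\perp\setminus\{0\}$; the same Duhamel representation \eqref{w2-rel0} and fixed-point equation \eqref{w2-rel2} solved via Brouwer on $\overline{B}_{\vep/2}\subset E_2$; the same modified spectral split in the gradient inequality \eqref{fLinvf_mod} producing the remainder $r_2(s)$ controlled by the fast decay of $\mathbb{P}_2(v(s)-\phi)$; and the same finite bootstrap in the case $\nu_2(\rho/2+1)<\nu_m$ closed by the coercivity Lemma~\ref{L:coer}. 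This is essentially the paper's argument.

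One small caveat shared by both your sketch and the paper's text: the assertion that $\eta^\perp\neq 0$ alone forces $v_0\neq\phi$ fails if $\eta^\perp$ happens to be a scalar multiple of $\phi$ and the fixed point returns $\eta_*=0$ (then $v_0=c_0(1+c)\phi=\phi$ by the scaling $t_*(c w)=c^{q-2}t_*(w)$). This is harmless — choosing $\eta^\perp$ not proportional to $\phi$, e.g.\ $\eta^\perp\in E_m$, removes the issue — but if you write this up it is worth making the choice of $\eta^\perp$ explicit.
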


\appendix

\section{Proof of Lemma \ref{L:ev-var}}\label{A:ev-var}

In this appendix, we shall give a proof of Lemma \ref{L:ev-var} for the convenience of the reader. Due to \eqref{u-ext}, for each $w \in H^1_0(\Omega)$, there exists $w^s_0 \in E^s_0$ such that
$$
w = w^s_0 + \sum_{j=1}^\infty \alpha^s_j e^s_j \ \mbox{ in } H^1_0(\Omega), \quad \alpha^s_j := (w, e^s_j)_{H^1_0(\Omega)}.
$$
Hence, when $\|w\|_{H^1_0(\Omega)} = 1$, we have
$$
\|w^s_0\|_{H^1_0(\Omega)}^2 + \sum_{j=1}^\infty (\alpha^s_j)^2 = 1.
$$
Then we observe that
\begin{align*}
\int_\Omega |v(s)|^{q-2} w^2 \, \d x
&= \sum_{i=1}^\infty \sum_{j=1}^\infty \alpha^s_i \alpha^s_j \int_\Omega |v(s)|^{q-2} e^s_i e^s_j \, \d x.
\end{align*}
Here we used the fact that $|v(s)|^{q-2}w^s_0 = 0$ a.e.~in $\Omega$. Hence we see that
\begin{align}
\int_\Omega |v(s)|^{q-2} w^2 \, \d x
&= \sum_{i=1}^\infty \sum_{j=1}^\infty \alpha^s_i \alpha^s_j \frac 1 {\mu^s_i} \int_\Omega \nabla e^s_i \cdot \nabla e^s_j \, \d x \nonumber\\
& = \sum_{j=1}^\infty \frac{(\alpha^s_j)^2}{\mu^s_j}.\label{Hs-norm-decomp}
\end{align}

Set $Y = \mathrm{span} \{ e^s_1, e^s_2, \ldots, e^s_j\}$. Then we find from \eqref{Hs-norm-decomp} that
$$
\inf_{\substack{w \in Y\\[0mm]\|w\|_{H^1_0(\Omega)}=1}} \int_\Omega |v(s)|^{q-2} w^2 \, \d x  = \frac 1 {\mu^s_j},
$$
which implies
\begin{equation}\label{1}
\sup_{\substack{Y \subset H^1_0(\Omega)\\[0mm]\dim Y = j}} \inf_{\substack{w \in Y\\[0mm]\|w\|_{H^1_0(\Omega)}=1}} \int_\Omega |v(s)|^{q-2} w^2 \, \d x \geq \frac 1 {\mu^s_j}.
\end{equation}
We shall prove the inverse inequality. In case $j = 1$, thanks to \eqref{Hs-norm-decomp}, we note that $\mu^s_1$ can be characterized as follows:
\begin{align*}
\MoveEqLeft
\sup_{\|w\|_{H^1_0(\Omega)} = 1} \int_\Omega |v(s)|^{q-2} w^2 \, \d x 
\\
&= \sup \left\{ \sum_{j=1}^\infty \frac{(\alpha^s_j)^2}{\mu^s_j} \colon \sum_{j=1}^\infty (\alpha^s_j)^2 \leq 1 \right\} = \frac 1 {\mu^s_1},
\end{align*}
which in particular implies the inverse inequality of \eqref{1} with $j = 1$. In case $j \geq 2$, for each $j$-dimensional subspace $Y$ of $H^1_0(\Omega)$, one can take $w_Y \in Y$ such that $\|w_Y\|_{H^1_0(\Omega)} = 1$ and $(w_Y, e^s_i)_{H^1_0(\Omega)} = 0$ for $i = 1,\ldots,j-1$. Hence it holds that
$$
\int_\Omega |v(s)|^{q-1} w_Y^2 \, \d x \leq \frac 1 {\mu^s_j};
$$
whence it follows that
$$
\inf_{\substack{w \in Y\\[0mm]\|w\|_{H^1_0(\Omega)}=1}} \int_\Omega |v(s)|^{q-2} w^2 \, \d x \leq \frac 1 {\mu^s_j}
$$
for $j \geq 2$. Thus the inverse inequality of \eqref{1} follows from the arbitrariness of $Y$. Therefore we obtain \eqref{ev-var-s}. The assertion \eqref{ev-var} for $\mu_j$ can be verified in the same fashion. \qed


\end{document}